\theoremstyle{plain}
\theoremstyle{definition}\newtheorem{theorem}{Theorem}[section]
\theoremstyle{definition}\newtheorem{lemma}[theorem]{Lemma}
\theoremstyle{plain}
\theoremstyle{plain}
\theoremstyle{definition}\newtheorem{remark}{Remark}[section]
\newcommand{\san}{\mathrm{div}}
\newcommand{\mr}{\mathbb{R}}
\newcommand{\mt}{\mathbb{T}}
\newcommand{\mz}{\mathbb{Z}}
\newcommand{\Li}{\Lambda^\iota}
\newcommand{\dd}{\mathrm{~d}}
\newcommand{\w}{\widehat}
\newcommand{\tb}{\tilde{b} }
\newcommand{\bs}{\breve{\sigma}}
\newcommand{\bw}{\breve{w}}
\newcommand{\f}{\frac}
\numberwithin{equation}{section}
\begin{document}
	\title{Asymptotic stability for $n$-dimensional isentropic compressible MHD equations without magnetic diffusion}
	\author{Quansen Jiu\footnote{School of Mathematical Sciences, Capital Normal University, Beijing, 100048, P. R. China. Email: jiuqs@cnu.edu.cn}~~~\,\,\,\,Jitao Liu\footnote{Department\,\,of\,\,Mathematics,\,\,School\,\,of\,\,Mathematics, \,\,Statistics\,\,and\,\,Mechanics,\,\,Beijing\,\,University\\
of\,\,Technology,\,\,Beijing,\,\,100124,\,\,P.\,\,R.\,\,China.\,\,Emails:\,\,jtliu@bjut.edu.cn,\,\,jtliumath@qq.com}~~~\,\,\,\,Yaowei Xie\footnote{School of Mathematical Sciences, Capital Normal University, Beijing, 100048, P. R. China. Email: mathxyw@163.com}}
\date{}
\maketitle
\begin{abstract}
Whether the global well-posedness of strong solutions of $n$-dimensional compressible isentropic magnetohydrodynamic ({\it MHD for short}) equations without magnetic diffusion holds true or not
remains an challenging open problem, even for the small initial data. In recent years, stared from the pioneer work by Wu and Wu {\it[Adv.\,\,Math.\,\,310\,\,(2017),\,\,759--888]}, much more attention has been paid to the system when the magnetic field near an equilibrium state ({\it the background magnetic field for short}). In particular, when the background magnetic field satisfies the Diophantine condition (see \eqref{Diophantine} for details), Wu and Zhai {\it [Math.\,\,Models\,\,Methods\,\,Appl.\,\,Sci.\,\,33 (2023), no.\,13, 2629--2656]} established the decay estimates and asymptotic stability for smooth solutions of the 3D compressible isentropic MHD system without magnetic diffusion in $H^{4r+7}(\mathbb{T}^3)$ with $r>2$ by exploiting a wave structure. In this paper, a new dissipative mechanism is found out and applied so that we can improve the spaces where the decay estimates and asymptotic stability of solutions are taking place
by Wu and Zhai. More precisely, we establish the decay estimates of solutions in $H^{r+1}(\mathbb{T}^n)$ and asymptotic stability result in  $H^{\left(3r+3\right)^+}(\mathbb{T}^n)$ for any dimensional periodic domain $\mathbb{T}^n$ with $n\geq 2$ and $r>n-1$. Our results  provide an approach for establishing the decay estimates and asymptotic stability in the Sobolev spaces with much lower regularity and uniform dimension, which can be used to study many other related models such as the compressible non-isentropic MHD system without magnetic diffusion and so on.
\end{abstract}
	\noindent {\bf MSC(2020):}\quad 35A01, 35B35, 35Q35, 76E25, 76W05.
    \vskip 0.02cm
	\noindent {\bf Keywords:} compressible MHD equations, global well-posedness, stability,\\ decay estimates.
	
\section{Introduction}
\subsection{Background and Motivation}

\,\,\,\,\,\,\,\,In this paper, we are concerned with the following isentropic compressible MHD equations without magnetic diffusion in $n$-dimensional periodic domain $\mt^n=[-1,1]^n$ with $n\geq 2$,
	\begin{align}\label{original}
		\left\{\begin{array}{l}
			\rho_t+\nabla \cdot(\rho u)=0, \quad(t, x) \in \mathbb{R}^{+} \times \mathbb{T}^n, \\
			\rho u_t+\rho u \cdot \nabla u-\mu \Delta u-\lambda \nabla \nabla \cdot u+\nabla P=b\cdot\nabla b+\nabla( b \cdot  b), \\
			b_t+u \cdot \nabla b-b \cdot \nabla u=-b \nabla \cdot u, \\
			\nabla \cdot b=0, \\
			\rho(0, x)=\rho_0(x), \,\, u(0, x)=u_0(x), \,\, b(0, x)=b_0(x),
		\end{array}\right.
	\end{align}
where $\rho=\rho(x,t)$, $u=u(x,t)$ and $b=b(x,t)$ denote the density, the velocity field and the magnetic field respectively, $\lambda$ and $\mu$ are viscous coefficients satisfying
\begin{align*}
	\lambda>0, \quad \lambda+\mu>0.
\end{align*}
$P=P(\rho)$ denotes the scalar pressure, which is a strictly increasing and smooth function of $\rho$ satisfying $P'(\bar{\rho})=1$ for some constant reference density $\bar{\rho}>0$. Here,
\begin{align}\label{rho0}
	\bar{\rho}=\int_{\mathbb{T}^n}\rho(0,x) \dd x.
\end{align}

The MHD equations have been instrumental in accurately modeling significant phenomena in geophysics, astrophysics, cosmology, and engineering, including magnetic reconnection in astrophysics and geomagnetic dynamo in geophysics (see e.g., \cite{introduction1,introduction2}). Mathematically speaking, the compressible MHD equations with both viscosity and magnetic diffusion have been extensively studied, and important progress has been made in establishing its well-posedness problem, see \cite{wang-arma-2010-compressible-full,huang-2013-cmp-compresssible-full,suen-arma-2012-full,hong-2017-siam-full,liyachun-2023-jde-full} and the references therein. In particular, for
compressible isentropic MHD equations without magnetic diffusion \eqref{original}, the issue on global existence and stability of strong solutions near a background magnetic field has been attracting a lot of attention of mathematicians, and there have been rich research results in this area as well (see e.g., \cite{wu-2017-adv-compressible-nonresistive,tan-2018-siam-compressible-nonresistive,dong-2023-jns-nonresistive,jiang-2017-nonlinear-compressible-nonresistive,jiang-2023-arma-nonresistive,zhu-2022-jfa-nonresistive-nonresistive,zhai-3m-compressible-nonresistive}).

When the density $\rho$ is a constant, the system \eqref{original} becomes the incompressible MHD equations without magnetic diffusion, and extensive research has been made on the global
existence and stability of its solutions near a strong background magnetic field ${e_n}=(0,\cdots,0,1)$ or ${e_1}=(1,0,\cdots,0)$. Relevant results for the incompressible flow, please refer to \cite{abidi-2017-GlobalSolution3D,ren2014global,zhangting,panGlobalClassicalSolutions2018} and for the compressible flow, see \cite{wu-2017-adv-compressible-nonresistive,zhu-2022-jfa-nonresistive-nonresistive} for example. Specifically, for compressible isentropic MHD equations without magnetic diffusion and the pressure obeys the polytropic law, Wu and Wu \cite{wu-2017-adv-compressible-nonresistive} established its stability result near a strong background magnetic field in two-dimensional (2D for short) whole space $\mathbb{R}^2$ by transforming the original system \eqref{original} into a wave system. After this work, for the same perturbation system and general pressure, Wu and Zhu \cite{zhu-2022-jfa-nonresistive-nonresistive} obtained its stability result for 2D periodic domain $\mathbb{T}^2$.
However, whether or not strong solutions to the 3D system without magnetic diffusion near a strong background magnetic field are global well-posedness and stable in the whole space $\mathbb{R}^3$ or the periodic domain $\mathbb{T}^3$ remains an challenging open problem.

Except for the strong background magnetic field, there is another kind of background magnetic field which has attracted a lot of attention of mathematicians. This is the background magnetic field satisfies the following Diophantine condition.
{\bf Diophantine condition}: For every non zero vector $k \in \mathbb{Z}^n$ and $r>n-1$, there exists a constant $c>0$ such that
\begin{align}\label{Diophantine}
	|\tilde{b}\cdot k|\geq \frac{c}{|k|^r}.
\end{align}
It should be noted that according to the references \cite{Alinhac} and \cite{chen-2022-3dmhd-Diophant}, the Diophantine condition encompasses almost all vectors in $\mathbb{R}^n$.
As a matter of fact, when the magnetic field $b$ close to $\tb$, by setting $B=b - \tilde{b}$,  the system \eqref{original} becomes the following perturbation
\begin{align}\label{equation0}
	\left\{\begin{array}{l}
		\rho_t+\nabla \cdot(\rho u)=0, \quad(t, x) \in \mathbb{R}^{+} \times \mathbb{T}^n, \\
		\rho u_t+\rho u \cdot \nabla u-\mu \Delta u-\lambda \nabla \nabla \cdot u+\nabla P=h_1, \\
		B_t+u \cdot \nabla B-B \cdot \nabla u=h_2, \\
		\nabla \cdot B=0, \\
		\rho(0, x)=\rho_0(x),\,\, u(0, x)=u_0(x), \,\, B(0, x)=B_0(x),
	\end{array}\right.
\end{align}
where
\begin{equation}\label{h1}
h_1=\tb\cdot\nabla B+B\cdot\nabla B-\nabla(\tb\cdot B)-\nabla(B\cdot B),
\end{equation}
and
\begin{equation}\label{h2}
h_2=\tb\cdot\nabla u-B \nabla \cdot u-\tb \nabla\cdot u.
\end{equation}

For the incompressible system without either viscosity or magnetic diffusion near any background magnetic field satisfying the Diophantine condition, Chen, Zhang and Zhou \cite{chen-2022-3dmhd-Diophant} first studied its stability problem in the 3D periodic domain $\mathbb{T}^3$ and proved that its solution is asymptotic stable in the Sobolev spaces $H^{4r+7}(\mathbb{T}^3)$, which was then improved to  $H^{(3+2\beta)r+5+(\alpha+2\beta)}(\mathbb{T}^2)$ for 2D periodic domain $\mathbb{T}^2$ and any $\alpha>0$, $\beta>0$ by Zhai \cite{zhai-incompressible-2023-jde}. Later after \cite{chen-2022-3dmhd-Diophant} and \cite{zhai-incompressible-2023-jde}, by exploiting a new dissipative mechanism, we established the stability result in the Sobolev spaces with much lower regularity $H^{(3r+3)^+}(\mathbb{T}^n)$ for uniform dimension $n$ in \cite{preprite}, which improves the works of \cite{chen-2022-3dmhd-Diophant} and \cite{zhai-incompressible-2023-jde}. To better understand its stability theory, we also obtained the first linear stability result and discovered that the decay rates of linear and nonlinear systems are exactly same in \cite{preprite}.
Meanwhile, we found another interesting work by Jiang and Jiang \cite{jiang-2023-arma-nonresistive} for the 3D model without magnetic diffusion. For the background magnetic field satisfying the Diophantine condition and by setting $r=3$, they proved the stability of solutions for $(u_0,B_0)\in H^{17}(\mathbb{T}^3)\times H^{21}(\mathbb{T}^3)$ with some classes of large initial perturbation of magnetic field in Lagrangian coordinates.

As mentioned above, when the background magnetic filed satisfies the Diophantine condition, the research results on the stability of solutions of the incompressible flow have been very rich. However, the related study for the compressible flow is quite limited.
The initial work on this issue came from Wu and Zhai \cite{zhai-3m-compressible-nonresistive}, in which they established the stability result for the system \eqref{equation0}-\eqref{h2} when the density $\rho$ is close to $\bar{\rho}$ and the background magnetic field satisfies the Diophantine condition in 3D periodic domain $\mathbb{T}^3$. Following is a precise statement of their result.
\newline{\indent{\bf Theorem [Wu-Zhai,\cite{zhai-3m-compressible-nonresistive}].}} {\it Assume $\tilde{b}$ satisfies the Diophantine condition \eqref{Diophantine}. Let $m\geq 4r+7$ with $r>2$. Consider the system \eqref{equation0}-\eqref{h2} with the initial data $\left(\rho_0,u_0,B_0\right)$ satisfying
$$\rho_0-\bar{\rho}\in H^m(\mathbb{T}^3),\,\,\,c_0\leq \rho_0\leq c_0^{-1},\,\,\,u_0\in H^m(\mathbb{T}^3)\,\,\,B_0\in H^m(\mathbb{T}^3)$$
for some constant $c_0>0$. We further assume
$$\int_{\mt^n} \rho_0u_0\dd x=\int_{\mt^n}B_0\dd x=0.$$
Then there exists a small constant $\varepsilon$ such that, if
$$\left\|\rho_0-\bar{\rho}\right\|_{H^m}+\left\|u_0\right\|_{H^m}+\left\|B_0\right\|_{H^m} \leq \varepsilon,$$
then the system \eqref{equation0}-\eqref{h2} admits a unique global solution $(\rho-\bar{\rho},u,B)\in C([0,\infty);$
$H^m)$. Moreover, for any $t\geq0$ and $r+4\leq\beta\leq N$, there holds}
\begin{equation}\label{decaywz}
\|\left(\rho-\bar{\rho}\right)(t)\|_{H^\beta}+\|u(t)\|_{H^\beta}+\|B(t)\|_{H^\beta}\leq C(1+t)^{-\frac{3(m-\beta)}{2(m-r-4)}}.
\end{equation}
Later on, Li, Xu and Zhai \cite{lizhai-2023-jga} adapted the similar idea of \cite{zhai-3m-compressible-nonresistive} to prove the asymptotic stability of solutions of compressible non-isentropic MHD equations without magnetic diffusion in $H^{4r+7}(\mathbb{T}^3)$ with $r>2$.

Motivated by the work \cite{zhai-3m-compressible-nonresistive}, we would like to know if the solution could decay and become stable in the Sobolev spaces with {\it much lower} regularity and uniform dimension $n\ge 2$\,? In this paper, we give a positive answer to this question for the periodic domain $\mathbb{T}^n$.

\subsection{Main result}
\,\,\,\,\,\,\,\,Now, we are in the position to state our main result as follows.

\begin{theorem}\label{thm}
	Let $n\geq 2$ and $m> 3r+3$ with $r>n-1$. Assume that the initial data $(\rho_0-\bar{\rho},u_0, B_0) \in H^m(\mt^n)$ satisfies
	\begin{align}\label{b0}
		\int_{\mt^n} \rho_0u_0\dd x=\int_{\mt^n}B_0\dd x=0
	\end{align}
and
\begin{align}\label{rhoinitial}
	c_0\leq \rho_0\leq c_0^{-1},
\end{align}
for some constant $c_0>0$. If there is a small constant $\varepsilon>0$ such that
\begin{align}\label{small}
	\left\|\rho_0-\bar{\rho}\right\|_{H^m(\mt^n)}+\left\|u_0\right\|_{H^m(\mt^n)}+\left\|B_0\right\|_{H^m(\mt^n)} \leq \varepsilon,
\end{align}
then there exists a unique global solution $(\rho-\bar{\rho},u,B)\in C([0,\infty); H^m(\mt^n))$ solving the system \eqref{equation0}-\eqref{h2}. In addition, for any $t\geq 0$ and $r+1\leq \alpha\leq m$, the following decay estimates holds
\begin{align}
		\|\left(\rho-\bar{\rho}\right)(t)\|_{H^\alpha(\mt^n)}+\|u(t)\|_{H^\alpha(\mt^n)}+\|B(t)\|_{H^\alpha(\mt^n)}\leq C(1+t)^{-\frac{m-\alpha}{2(r+1)}}.
\end{align}
\end{theorem}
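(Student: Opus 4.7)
The plan is an augmented $H^m$ energy method combined with a time-weighted interpolation argument. The linear part of \eqref{equation0} dissipates only the velocity $u$ through the viscous term; both the density fluctuation $\sigma:=\rho-\bar\rho$ and the magnetic perturbation $B$ are purely transported at the linear level. The Diophantine condition \eqref{Diophantine} is exploited through the key fact that, on zero-mean functions (a property propagated in time by \eqref{b0}), the Fourier multiplier $\tilde b\cdot\nabla$ is invertible with a loss of exactly $r$ derivatives; denote its inverse by $\Lambda^\iota$.

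First, I would apply $\partial^\alpha$ for $|\alpha|\le m$ to \eqref{equation0} and carry out the standard $H^m$ energy estimate, using an effective unknown such as $\sqrt{\rho}\,u$ to handle the variable coefficient in front of $u_t$. Commutator and Moser estimates on the nonlinearities $h_1,h_2$ defined in \eqref{h1}--\eqref{h2} produce an inequality of the form
\begin{align*}
\tfrac{d}{dt}\|(\sigma,u,B)\|_{H^m}^2+\mu\|\nabla u\|_{H^m}^2+(\lambda+\mu)\|\nabla\cdot u\|_{H^m}^2\lesssim \|(\sigma,u,B)\|_{H^m}\|\nabla u\|_{H^m}^2+\text{l.o.t.},
\end{align*}
with no damping on $\sigma$ or on $B$.

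To manufacture the missing dissipation I would add to the energy two interaction functionals, summed over $|\alpha|\le m-r-1$, of the schematic form $\int \partial^\alpha u\cdot \Lambda^\iota\partial^\alpha B\,\dd x$ and $\int \partial^\alpha u\cdot \Lambda^\iota\nabla\partial^\alpha\sigma\,\dd x$. Differentiating these in time and using the identity $\Lambda^\iota(\tilde b\cdot\nabla)=\mathrm{Id}$ on the source terms $\tilde b\cdot\nabla u$ in the $B$-equation and $\nabla\sigma-\tilde b\cdot\nabla B$ in the $u$-equation, the first functional produces the good term $-\|B\|_{H^{m-r-1}}^2$ and the second produces $-\|\nabla\sigma\|_{H^{m-r-1}}^2$, modulo errors absorbed by the parabolic dissipation of $u$ or by smallness. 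Setting $E:=\|(\sigma,u,B)\|_{H^m}^2+\delta(\text{interaction terms})$ with $\delta$ sufficiently small, one obtains
\begin{align*}
\tfrac{d}{dt}E+cD\le C\sqrt{E}\,D,\qquad D:=\|\nabla u\|_{H^m}^2+\|\nabla\sigma\|_{H^{m-r-1}}^2+\|B\|_{H^{m-r-1}}^2,
\end{align*}
where $E\sim\|(\sigma,u,B)\|_{H^m}^2$. Because $m>3r+3$ guarantees $m-r-1>2(r+1)>n$, the Sobolev algebra on $\mt^n$ controls every quadratic term, and the smallness \eqref{small} closes a bootstrap, giving global existence with $E(t)\le C\varepsilon^2$. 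The polynomial decay for $r+1\le\alpha\le m$ then follows by a Gagliardo--Nirenberg interpolation between the dissipated norm $H^{m-r-1}$ and the bounded norm $H^m$, producing an ODE of the form $\tfrac{d}{dt}\mathcal{E}_\alpha+c\mathcal{E}_\alpha^{1+\kappa_\alpha}\le 0$ whose integration yields the announced rate $(1+t)^{-(m-\alpha)/(2(r+1))}$.

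The hard part will be the precise bookkeeping of the Diophantine loss inside the quadratic terms. The piece $\nabla(\tilde b\cdot B)$ in $h_1$ and, especially, the compressibility corrections $-B\,\nabla\cdot u-\tilde b\,\nabla\cdot u$ in $h_2$---both absent in the incompressible analogue treated in the authors' earlier work \cite{preprite}---force $\Lambda^\iota$ to act on products of two perturbation quantities. These terms must be arranged so that the $r$-derivative loss from $\Lambda^\iota$ is compensated \emph{exactly} by the regularity margin $m-(r+1)$, and the fact that one needs the critical margin $2(r+1)$ rather than the $r+1$ sufficient in the incompressible case is what drives the sharp threshold $m>3r+3$ announced in the statement.
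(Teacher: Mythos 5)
Your high-level strategy (augmented energy functional with cross terms that exploit the Diophantine Poincar\'e inequality, then interpolation to extract decay) is in the right spirit, and your cross terms are a reasonable analogue of the paper's coupling integrals $-\int(\tb\cdot\nabla B)\cdot\Lambda^{s}\mathbb{P}u\dd x$. However, there is a genuine gap in how you propose to pass from the energy inequality to the stated decay rate, and it is exactly where the paper's main innovation lives.

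You set up the dissipation inequality at the top level $H^m$: $E\sim\|(a,u,B)\|_{H^m}^2$ and $D$ contains $\|u\|_{H^m}^2$, $\|(a,B)\|_{H^{m-r-1}}^2$. You then claim the decay ``follows by a Gagliardo--Nirenberg interpolation between the dissipated norm $H^{m-r-1}$ and the bounded norm $H^m$,'' producing an ODE $\frac{\dd}{\dd t}\mathcal{E}_\alpha + c\mathcal{E}_\alpha^{1+\kappa_\alpha}\le 0$. But the $H^{m-r-1}$ dissipation does \emph{not} bound any positive power of the $H^m$ energy $E$ from below, unless you control a norm strictly above $H^m$, which you do not. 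Interpolation alone gives you nothing from the $H^m$ inequality. What actually has to happen --- and what the paper does --- is to derive a \emph{separate} dissipation inequality at a strictly lower level $H^{s/2+1}$ with $s/2+1$ as small as $r+1$; only there can the $H^{(s/2)-r}$ dissipation produced by Lemma \ref{Diophantinepoincare} be played off against the bounded top norm $H^m$. At that low level the paper must then show that the quadratic/cubic/quartic nonlinearities are absorbed by the \emph{weak} dissipation $\|(a,B)\|_{H^{(s/2)-r}}^2$ rather than by $\|(a,B)\|_{H^{s/2+1}}^2$; this is done through the interpolation estimates \eqref{em1}--\eqref{em5}, \eqref{multi1}--\eqref{multi3}, \eqref{right1}--\eqref{right3}, which trade the loss of $r+1$ derivatives against the $H^m$ smallness, and this step is what imposes $m\ge 2r+\frac{n}{2}+3$. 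Your sketch does not address this absorption issue at all, yet it is the crux.

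Beyond that, even after the low-level inequality is in hand, the paper does \emph{not} derive a Wu--Zhai-type power ODE $\frac{\dd}{\dd t}\mathcal{E}_\alpha+c\mathcal{E}_\alpha^{1+\kappa_\alpha}\le 0$. Instead it uses a frequency-split estimate (equation \eqref{AM}) to convert the weak $H^{(s/2)-r}$ damping into a \emph{linear} damping $-\frac{\varsigma}{2MC_6}F(t)$ plus a small forcing $C_7 M^{-1-(m-s/2-1)/(1+r)}\|(a,B)\|_{H^m}^2$, and then makes the truncation parameter $M$ time-dependent ($M\sim t$) in a weighted Lyapunov argument (equations \eqref{5.11}--\eqref{Hsdecay}). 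Your superlinear-ODE approach would, if executed carefully at the correct level $H^{r+1}$, give the same decay exponent, but it is essentially Wu--Zhai's method, which the authors explicitly move away from --- so it would need its own justification that the nonlinear terms close at that regularity, which you have not given. Finally, your explanation for the threshold $m>3r+3$ (the ``critical margin $2(r+1)$ versus $r+1$ in the incompressible case'') is not what the paper uses and is not accurate: the incompressible companion work \cite{preprite} also requires $H^{(3r+3)^+}$, and in the present paper $m>3r+3$ serves two distinct purposes, $m\ge 2r+\frac{n}{2}+3$ for the nonlinear absorption and $\frac{m-r-1}{2(r+1)}>1$ (equation \eqref{rangeofm}) for the time-integrability of $\|(a,u,B)\|_{W^{1,\infty}}$ in the Gronwall bootstrap. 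A further, smaller, structural difference: the paper does not dissipate $a$ and $B$ separately via two $\Lambda^\iota$-weighted cross terms; it keeps the wave-coupled variables $w=a+\tb\cdot B$ and the effective velocity $\sigma=\mathbb{D}u-\frac{1}{\lambda+\mu}\Delta^{-1}\nabla w$ (inherited from Wu--Zhai), and it writes the cross term with $\tb\cdot\nabla B$ directly rather than with its inverse $\Lambda^\iota$, deferring the Diophantine gain to a single Poincar\'e step.
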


\begin{remark}\label{rmk3}
Theorem \ref{thm} {\it greatly reduces} the regularity requirement on the initial data of \cite{zhai-3m-compressible-nonresistive} from $H^{4r+7}(\mathbb{T}^3)$
to $H^{3(r+1)^+}(\mathbb{T}^3)$ for $r>2$.
\end{remark}

\begin{remark}\label{rmk1}
Compared with \eqref{decaywz}, in the Sobolev spaces with {\it much lower} regularity $H^{r+1}(\mt^n)$, we obtain the {\it effective} decay estimate.
\end{remark}

\begin{remark}\label{rmk7}
As pointed out in the references \cite{Alinhac} and \cite{chen-2022-3dmhd-Diophant}, the vectors consist of {\it rational numbers} or {\it at least one zero} are not contained in the Diophantine condition. Thus, our results are not contained in \cite{zhu-2022-jfa-nonresistive-nonresistive} concerning the global well-posedness of solutions close to the strong background magnetic field $\tilde{b}=(0,1)\in\mr^2$ in 2D periodic domain $\mathbb{T}^2$.
\end{remark}

\begin{remark}\label{rmk5}
Because our results hold for uniform dimension $n\geq 2$, Theorem \ref{thm} presents the first asymptotic stability result for 2D compressible isentropic MHD equations without magnetic diffusion near any background magnetic field satisfying the Diophantine condition.
\end{remark}

\begin{remark}\label{rmk6}
Our results provide an approach for establishing the decay estimates and asymptotic stability in the Sobolev spaces with much lower regularity and uniform dimension, which can be used to study many other related models such as compressible non-isentropic MHD system without magnetic diffusion and so on.
\end{remark}

\begin{remark}\label{rmk8}
Theorem \ref{thm} together with \cite{zhai-3m-compressible-nonresistive} reveal that any background magnetic field satisfying the Diophantine condition can stabilize the compressible isentropic MHD flow without magnetic diffusion.
\end{remark}

\subsection{Challenges and methodology}

\,\,\,\,\,\,\,\,In this subsection, we will explain why we can {\it greatly reduce} the regularity of Sobolev space in which the decay estimates and stability result of solution holds.
For clarity, we will divide it into following five parts.

{\bf Proof scheme}: Firstly, given the initial data  $(\rho_0-\bar{\rho},u_0, B_0) \in H^m(\mt^n)$, according to the literature \cite{local 1984}, the local well-posedness of solutions to the system \eqref{equation0}-\eqref{h2} can be proven via a rather standard procedure. Thanks to this fact, there exists a time $T > 0$ such that $(\rho-\bar{\rho},u, B)\in C([0,T]; H^m(\mt^n))$ solves the system \eqref{equation0}-\eqref{h2} uniquely and satisfies
\begin{align}\label{rho}
	\dfrac{1}{2}c_0\leq \rho(x,t)\leq 2c_0^{-1},
\end{align}
for any $t\in[0,T]$. Then, the main work is focused on how to extend this local solution to be a global one. The common method is the bootstrapping argument, whose specific operations are as follows. At the very beginning, we take the ansatz such that
\begin{align}\label{deltadelta}
		\sup_{t\in[0,T]}\left(\|\left(\rho-\bar{\rho}\right)(t)\|_{H^m}+\|u(t)\|_{H^m}+\|B(t)\|_{H^m}\right)\leq \delta,
	\end{align}
where $0<\delta<1$ is a small parameter. Assume that the initial data is chosen small, that satisfies
\begin{align}\label{smallsamll}
	\left\|\rho_0-\bar{\rho}\right\|_{H^m}+\left\|u_0\right\|_{H^m}+\left\|B_0\right\|_{H^m} \leq \varepsilon,
\end{align}
for sufficiently small $\varepsilon<0$ and then we can prove
\begin{align*}
		\sup_{t\in[0,T]}\left(\|\left(\rho-\bar{\rho}\right)(t)\|_{H^m}+\|u(t)\|_{H^m}+\|B(t)\|_{H^m}\right)\leq \frac{\delta}{2}.
	\end{align*}
At this moment, the bootstrapping argument tells us that the local solution becomes a global one.

{\bf Dissipation mechanism}: It is noted that in the system \eqref{equation0}-\eqref{h2}, there isn't any dissipation or damping in the equations of density and magnetic field. It is crucial to make use of the  the dissipative effects of quantities $\tb\cdot \nabla B$ and $\left(\rho-\bar{\rho}\right)+\tb\cdot B$ due to the background magnetic field (see \cite{zhai-3m-compressible-nonresistive}).

To simplify the analysis, as in \cite{zhai-3m-compressible-nonresistive}, we can assume that $\bar{\rho}=1$ and denote $a=\rho-\bar{\rho}=\rho-1$.
In particular, the system \eqref{equation0}-\eqref{h2} can be converted into
\begin{align}\label{equation}
\left\{\begin{array}{l}
	\partial_t a+\nabla\cdot u=f_1, \\
	\partial_t u-\nabla \cdot(\bar{\mu}(a) \nabla  u)-\nabla(\bar{\lambda}(a) \nabla \cdot u)+\nabla a+\nabla(\tb \cdot  B)-\tb \cdot \nabla  B=f_2, \\
	\partial_t  B-\tb \cdot \nabla  u+\tb \nabla \cdot u=f_3, \\
	\nabla \cdot B=0,
\end{array}\right.
\end{align}
where
	\begin{align}\label{definition}
	\bar{\mu}(a) \stackrel{\text { def }}{=} \frac{\mu}{1+a}=\f{\mu}{\rho},\,\,\bar{\lambda}(a) \stackrel{\text { def }}{=} \frac{\lambda}{1+a}=\f{\lambda}{\rho},\,\,I(a) \stackrel{\text { def }}{=} \frac{a}{1+a},\,\, k(a) \stackrel{\text { def }}{=} \frac{P^{\prime}(1+a)}{1+a}-1,
\end{align}
and
	\begin{align}\label{nonlinearity}
		\begin{aligned}
			& f_1 \stackrel{\text { def }}{=}- u \cdot \nabla a-a \nabla \cdot u, \\
			& \begin{aligned}
				f_2 \stackrel{\text { def }}{=} & - u \cdot \nabla  u+ B \cdot \nabla  B+
				\nabla( B \cdot  B)+k(a) \nabla a+\mu(\nabla I(a)) \nabla  u \\
				& +\lambda(\nabla I(a)) \nabla \cdot u-I(a)\left(\tb \cdot \nabla  B+ B \cdot \nabla  B-\nabla(\tb \cdot B) -\nabla( B \cdot  B)\right),
			\end{aligned} \\
			& f_3 \stackrel{\text { def }}{=}- u \cdot \nabla  B+ B \cdot \nabla  u- B \nabla \cdot u .
		\end{aligned}
	\end{align}
Concerning \eqref{rho}, without loss of generality, we assume
\begin{align}\label{assumption}
	\sup_{t\in[0,T],x\in \mathbb{T}^n}\left|a(t,x)\right|\leq \dfrac{1}{2}.
\end{align}
Define
	\begin{align*}
	\mathbb{D} \stackrel{\text { def }}{=} \nabla \Delta^{-1}\san ~~\mathrm{and}~~\mathbb{P}\stackrel{\text { def }}{=}\mathbb{I}-\mathbb{D}=\mathbb{I}-\nabla \Delta^{-1}\san\,\,({\rm Leray\,\,projection\,\, operator}),
\end{align*}
and apply the operators $\mathbb{P}$ and $\mathbb{D}$ to both sides of equation \eqref{equation}$_1$ to yield
\begin{align}\label{leray}
	&	\partial_t \mathbb{P}  u-\mu \Delta \mathbb{P}  u=  \tb \cdot \nabla  B+\mathbb{P} f_4,\\\label{nabla}
	&	\partial_t \mathbb{D}  u-(\lambda+\mu) \Delta \mathbb{D}  u+\nabla a+\nabla(  \tb \cdot  B)=\mathbb{D} f_4.
\end{align}
Then, by introducing two quantities
\begin{align*}
	w \stackrel{\text { def }}{=} a+\tb\cdot B, \quad \sigma \stackrel{\text { def }}{=} \mathbb{D} u-\dfrac{1}{\lambda+\mu}\Delta^{-1}\nabla w,
\end{align*}
Wu and Zhai \cite{zhai-3m-compressible-nonresistive} derived the following system involving a wave structure
\begin{align*}
	\renewcommand{\arraystretch}{2}
	\left\{\begin{array}{l}
		\partial_t w+\dfrac{1}{\lambda+\mu}\left(|\tb|^2+1\right) w+\left(|\tb|^2+1\right) \operatorname{div} \sigma=F_1,\\
		\partial_t \sigma-(\lambda+\mu) \Delta \sigma-\dfrac{1}{\lambda+\mu}\left(|\tb|^2+1\right) \mathbb{D} u=F_2 ,
	\end{array}\right.
\end{align*}
where $f_4$ and $F_1$, $F_2$ are explicitly shown in Subsection 3.2 and 3.4 respectively.

{\bf Main approaches}:  Using the classical energy method, one can get
\begin{align}
	&	\frac{\dd}{\dd t}\left[\underbrace{A\|   (u,B,a,\bs,\bw)\|_{H^{\frac{s}{2}+1}}^2-\int_{\mathbb{T}^n}\left(\tb\cdot\nabla B\right)\cdot \Lambda^s \mathbb{P}u\dd x-\int_{\mathbb{T}^n}\left(\tb\cdot\nabla B\right)\cdot \mathbb{P}u\dd x}_{I}\right]\nonumber\\
	&+\underbrace{A\left[\frac{1}{2}c_0\lambda \|  \san u\|_{H^{\frac{s}{2}+1}}^2+\frac{7}{32}c_0\mu \|\nabla  u\|_{H^{\frac{s}{2}+1}}^2+(\lambda+\mu)\|\nabla  \bs\|_{H^{\frac{s}{2}+1}}^2+\dfrac{1}{2(\lambda+\mu)}\|   \bw\|_{H^{\frac{s}{2}+1}}^2\right]}_{II}\nonumber\\
	\leq&-c_1\left\|\tb\cdot \nabla B\right\|_{H^{\frac{s}{2}}}^2+N,\label{apriori}
\end{align}
by setting $\breve{w}=\sqrt{\dfrac{3\left(\lambda+\mu\right)c_0\lambda}{64C_1C_4}}w$ and $\breve{\sigma}=\sqrt{\dfrac{c_0\lambda}{64C_4}}\sigma$, where $N$ represents the nonlinear terms and its expression is shown in \eqref{6.0}.

In  \cite{zhai-3m-compressible-nonresistive}, by choosing $n=3$ and $s=2r+6$, Wu and Zhai take $\left\|\tb\cdot \nabla B\right\|_{H^{r+3}}$ as a dissipative term directly and use it together with term $II$ to control the nonlinear terms $N$ and term $I$. By setting
\begin{align*}
G(t)&\triangleq A\|   (u,B,a,\bs,\bw)\|_{H^{r+4}}^2-\int_{\mathbb{T}^3}\left(\tb\cdot\nabla B\right)\cdot \Lambda^{r+4} \mathbb{P}u\dd x-\int_{\mathbb{T}^3}\left(\tb\cdot\nabla B\right)\cdot \mathbb{P}u\dd x \\
&\geq\frac{A}{2} \|   (u,B,a,\bs,\bw)\|_{H^{r+4}}^2
\end{align*}
and
\begin{align*}
H(t)&\triangleq A\left[\frac{1}{2}c_0\lambda \|  \san u\|_{H^{r+4}}^2+\frac{7}{32}c_0\mu \|\nabla  u\|_{H^{r+4}}^2+(\lambda+\mu)\|\nabla  \bs\|_{H^{r+4}}^2+\dfrac{1}{2(\lambda+\mu)}\|   \bw\|_{H^{r+4}}^2\right]\\
&+c_1\left\|\tb\cdot \nabla B\right\|_{H^{r+3}}^2,
\end{align*}
they got the following dissipative inequality
\begin{equation*}\label{WZinequality}
 \frac{\dd}{\dd t}G(t)+c\left(H(t)\right)^{\frac43}\leq0,
\end{equation*}
which implies
\begin{equation}\label{WZdecay}
 \|   (u,B,a,\bs,\bw)\|_{H^{r+4}({\mathbb{T}^3})}\leq C(1+t)^{-\frac{3}{2}}.
\end{equation}
This together with \eqref{smallsamll} and the bootstrapping argument then leads to the stability result.

Our main contribution of this paper is to establish the decay rate of $(a,u,B)$ in the Sobolev spaces with {\it much lower} regularity by further exploiting potential dissipation effect in \eqref{apriori}. In comparison with \cite{zhai-3m-compressible-nonresistive}, we first use the coupling and interaction between the term $c_1\left\|\tb\cdot \nabla B\right\|_{H^{\frac{s}{2}}}^2$ and half of $\dfrac{1}{2(\lambda+\mu)}\|   \bw\|_{H^{\frac{s}{2}+1}}^2$, together with the Diophantine condition satisfied by background magnetic filed to recover the dissipative effect of $a$ and $B$. Following this idea and unifying the various constants, we update \eqref{apriori} as
\begin{align}\nonumber
	&	\frac{\dd}{\dd t}\left[A\|   (u,B,a,\bs,\bw)\|_{H^{\frac{s}{2}+1}}^2-\int_{\mathbb{T}^n}\left(\tb\cdot\nabla B\right)\cdot \Lambda^s \mathbb{P}u\dd x-\int_{\mathbb{T}^n}\left(\tb\cdot\nabla B\right)\cdot \mathbb{P}u\dd x\right]\\\label{apriori1}
	&~+\varkappa\left[\underbrace{\|B\|_{H^{\frac{s}{2}-r}}^2+\|a\|_{H^{\frac{s}{2}-r}}^2}_{III}+\|\nabla u\|_{H^{\frac{s}{2}+1}}^2+\|\nabla  \bs\|_{H^{\frac{s}{2}+1}}^2+\|   \bw\|_{H^{\frac{s}{2}+1}}^2\right]\leq N.
\end{align}
Although the dissipative effect of term $III$ in \eqref{apriori1} is not strong enough to produce any decay rate, we restore the dissipative effect from the term $III$ to succeed to control $N$. We can further update \eqref{apriori1} as
\begin{align}\nonumber
	&	\frac{\dd}{\dd t}\left[A\|   (u,B,a,\bs,\bw)\|_{H^{\frac{s}{2}+1}}^2-\int_{\mathbb{T}^n}\left(\tb\cdot\nabla B\right)\cdot \Lambda^s \mathbb{P}u\dd x-\int_{\mathbb{T}^n}\left(\tb\cdot\nabla B\right)\cdot \mathbb{P}u\dd x\right]\\\nonumber
	\leq& -\dfrac{\varsigma}{2MC_6}\left[A\|   (u,B,a,\bs,\bw)\|_{H^{\frac{s}{2}+1}}^2-\int_{\mathbb{T}^n}\left(\tb\cdot\nabla B\right)\cdot \Lambda^s \mathbb{P}u\dd x-\int_{\mathbb{T}^n}\left(\tb\cdot\nabla B\right)\cdot \mathbb{P}u\dd x\right]\\\label{apriori2}
	&+\underbrace{\frac{C_7}{{M}^{1+\frac{m-\frac{s}{2}-1}{1+r}}}\left(\|a\|_{H^m}^2+\|B\|_{H^m}^2\right)}_{V},
\end{align}
from which we get desired dissipative effect. The price to pay is the appearance of forcing term $V$. To deal with the forcing term $V$, we introduce a suitable Lyapunov functional
\begin{align*}
	F(t)&\triangleq A\|   (u,B,a,\bs,\bw)\|_{H^{\frac{s}{2}+1}}^2-\int_{\mathbb{T}^n}\left(\tb\cdot\nabla B\right)\cdot \Lambda^s \mathbb{P}u\dd x-\int_{\mathbb{T}^n}\left(\tb\cdot\nabla B\right)\cdot \mathbb{P}u\dd x\\
&\geq\frac{A}{2}\|   (u,B,a,\bs,\bw)\|_{H^{\frac{s}{2}+1}}^2,
\end{align*}
with the help of which, we succeed to establish the following decay rate (see Lemma \ref{lem4} for more details)
\begin{align}
	\|   (u,B,a,\bs,\bw)\|_{H^{\frac{s}{2}+1}}\leq C(1+t)^{-\frac{m-(\frac{s}{2}+1)}{2(1+r)}}.\label{ourdecay}
\end{align}
  It is remarked that in \eqref{ourdecay}, $\frac{s}{2}+1\geq r+1$ for any dimension $n$, while the corresponding index in \eqref{WZdecay} is $r+4$. This means that we establish the decay estimates in Sobolev spaces with {\it much lower} regularity, which enables us to get the stability result in Sobolev spaces with {\it much lower} regularity.

\subsection{Organization of the paper}
\,\,\,\,\,\,\,\,In Section 2, a multitude of beneficial lemmas will be presented.
In Section 3, we focus on establishing a priori estimates.
More precisely, in Subsection 3.1, we will establish several Poincaré-type inequalities for $a$, $u$ and $B$ as preparation.
In Section 3.2, classical energy estimates for $a$, $u$ and $B$ will be provided.
Section 3.3 reveals a hidden dissipation mechanism for B.
In Section 3.4, in order to obtain the dissipative mechanism for $a$, two new quantities will be introduced and their energy estimates will be established.
In Section 4, we will concentrate on proving Theorem \ref{thm}.
To be specific, in Subsection 4.1, the effective decay rate of solution will be established, which is the core part of this paper.
In Section 4.2, we will use the bootstrapping argument to finish all the proof.

\section{Preliminaries}

\,\,\,\,\,\,\,\,The first Lemma presents one Poincaré-type inequality involving the Diophantine condition.
\begin{lemma}\label{Diophantinepoincare}
	Assume $\tb\in\mr^n$ be given such that the Diophantine condition \eqref{Diophantine} holds. If $f\in H^{s+r+1}$ satisfying $\int_{\mathbb{T}^n}f\dd x=0$, then for any $s\in \mathbb{R}$, it holds that
		\begin{align}\label{2.2}
		\|f\|_{H^s}\leq C \|\tilde{b}\cdot\nabla f\|_{H^{s+r}}.
	\end{align}

	\begin{proof}
		According to Plancherel's theorem, it follows that
		\begin{align*}
			\|\tilde{b}\cdot\nabla f\|_{H^{s+r}}^2&=\sum_{k\in\mz^n}\left(1+|k|^2\right)^{s+r}|\tilde{b}\cdot k|^2|\w{f}(k)|^2\\
			&=\sum_{k\in\mz^n\backslash\{0\}}\left(1+|k|^2\right)^{s+r}|\tilde{b}\cdot k|^2|\w{f}(k)|^2\\
			&\geq c\sum_{k\in\mz^n\backslash\{0\}}\left(1+|k|^2\right)^{s+r}|k|^{-2r}|\w{f}(k)|^2\\
			&\geq c\sum_{k\in\mz^n\backslash\{0\}}\left(1+|k|^2\right)^{s}|\w{f}(k)|^2=c\|f\|_{H^s}^2.
\end{align*}
		which completes the proof.
	\end{proof}
\end{lemma}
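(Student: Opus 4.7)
The plan is to reduce the inequality to a pointwise comparison of weights on the Fourier side. Working on $\mt^n$, the operator $\tb\cdot\nabla$ acts on the $k$-th Fourier mode as multiplication by $\ii\,\tb\cdot k$, so Plancherel's theorem transforms both sides of \eqref{2.2} into weighted $\ell^2$ sums:
\begin{equation*}
\|f\|_{H^s}^2=\sum_{k\in\mz^n}(1+|k|^2)^s|\w{f}(k)|^2, \qquad \|\tb\cdot\nabla f\|_{H^{s+r}}^2=\sum_{k\in\mz^n}(1+|k|^2)^{s+r}|\tb\cdot k|^2|\w{f}(k)|^2.
\end{equation*}
The zero-mean assumption $\int_{\mt^n}f\dd x=0$ kills the $k=0$ mode, so both sums effectively run over $k\in\mz^n\setminus\{0\}$, where $\tb\cdot k\neq 0$ by the Diophantine hypothesis.

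The heart of the matter is then a pointwise comparison of weights. I would apply the Diophantine condition \eqref{Diophantine} to get $|\tb\cdot k|^2\geq c^2|k|^{-2r}$ for every $k\in\mz^n\setminus\{0\}$, and combine this with the elementary bound $(1+|k|^2)^r|k|^{-2r}\geq 2^{-r}$, which is valid on the integer lattice whenever $|k|\geq 1$ (from $|k|^2\leq 1+|k|^2\leq 2|k|^2$). Together these yield the pointwise estimate $(1+|k|^2)^{s+r}|\tb\cdot k|^2\geq c^2\,2^{-r}(1+|k|^2)^s$, and term-by-term summation in the Fourier series completes the proof with a constant of the form $C=2^{r/2}/c$.

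There is no genuine analytic obstacle: the lemma is essentially a one-line Fourier computation. Its real content lies in the observation that the $r$ extra Sobolev derivatives on the right-hand side precisely balance the exponent $r$ in the Diophantine denominator $|k|^{-r}$, so the inequality is sharp with respect to the Diophantine exponent and cannot be improved under the bare assumption \eqref{Diophantine}. The zero-mean hypothesis is essential, since without it the right-hand side would entirely miss the constant mode of $f$, on which $\tb\cdot\nabla$ vanishes identically.
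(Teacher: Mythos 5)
Your proof is correct and follows essentially the same route as the paper: Plancherel, discard the $k=0$ mode via the zero-mean hypothesis, apply the Diophantine lower bound on $|\tb\cdot k|$, and compare lattice weights. The only cosmetic difference is that you use $(1+|k|^2)^r|k|^{-2r}\ge 2^{-r}$, whereas the sharper (and simpler) bound $(1+|k|^2)^r\ge |k|^{2r}$ gives the factor $\ge 1$ directly, which is what the paper implicitly uses.
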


Next, we introduce two weighted Poincaré-type inequalities when the density $\rho$ appears, that is quite different from the incompressible homogenous flow. The readers can refer to IV.2. in \cite{poincare1-invent} and Lemma 3.2 in \cite{poincare2-feireisl} for details.
\begin{lemma}\label{poincare1}
	Let $u\in H^1(\Omega)$, $\rho$ be a non-negative function and $\varLambda$ be a positive constant such that
	\begin{align*}
		\int_{\Omega}\rho(x,t)\dd x=1,\quad\rho\leq \varLambda,
	\end{align*}
where $\Omega\subset\mathbb{R}^n$ is a bounded domain. Then there exists a constant $C$ depending only on $\Omega$ and $\varLambda$ such that
\begin{align*}
	\int_{\Omega}\rho\left(u-\int_{\Omega}\rho u\dd x\right)^2\dd x\leq C(\Omega,\varLambda)\|\nabla u\|_{L^2}^2.
\end{align*}
\end{lemma}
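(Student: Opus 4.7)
My plan is to argue by contradiction combined with a compactness argument, in the spirit of the Lions / Feireisl--Novotn\'y approach to weighted Poincar\'e inequalities for variable density.

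Suppose the asserted inequality fails. Then for every $n\in\mathbb{N}$ there exist a non-negative $\rho_n\le\varLambda$ with $\int_\Omega \rho_n\dd x=1$ and a function $u_n\in H^1(\Omega)$ such that
$$\int_\Omega \rho_n\Bigl(u_n-\int_\Omega \rho_n u_n\dd x\Bigr)^2\dd x > n\,\|\nabla u_n\|_{L^2}^2.$$
After replacing $u_n$ by the renormalized sequence $v_n:=c_n\bigl(u_n-\int_\Omega \rho_n u_n\dd x\bigr)$ with $c_n$ chosen so that $\int_\Omega \rho_n v_n^2\dd x=1$, I may assume
$$\int_\Omega \rho_n v_n\dd x=0,\qquad \int_\Omega \rho_n v_n^2\dd x=1,\qquad \|\nabla v_n\|_{L^2}^2<\frac{1}{n}.$$
The goal is then to show that $v_n\to 0$ strongly in $L^2(\Omega)$, which contradicts the weighted mass being $1$.

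The key step is to pass from the weighted $L^2$-mass to an unweighted $L^2$-norm without invoking any lower bound on $\rho_n$. I would split $v_n=\bar v_n+w_n$, where $\bar v_n=|\Omega|^{-1}\int_\Omega v_n\dd x$ is the ordinary mean and $w_n$ has zero average. The classical Poincar\'e--Wirtinger inequality on the bounded (Lipschitz) domain $\Omega$ gives $\|w_n\|_{L^2}\le C_\Omega\|\nabla v_n\|_{L^2}\to 0$. Inserting this decomposition into the constraint $\int_\Omega \rho_n v_n\dd x=0$ and using $\int_\Omega \rho_n\dd x=1$ yields $\bar v_n=-\int_\Omega \rho_n w_n\dd x$, and Cauchy--Schwarz with $\rho_n\le\varLambda$ delivers $|\bar v_n|\le \varLambda^{1/2}\|w_n\|_{L^2}\to 0$. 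Hence $v_n\to 0$ in $L^2(\Omega)$, and
$$1=\int_\Omega \rho_n v_n^2\dd x\le \varLambda\,\|v_n\|_{L^2}^2\longrightarrow 0,$$
which is the desired contradiction.

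The main obstacle is precisely the absence of a positive lower bound on $\rho_n$, which would otherwise yield an $L^2$-bound on $v_n$ directly from the normalization. This is circumvented by decomposing $v_n$ into its mean and its zero-mean oscillation: the oscillation is controlled by the vanishing Dirichlet energy through Poincar\'e--Wirtinger, while the mean is controlled by the weighted orthogonality $\int_\Omega \rho_n v_n\dd x=0$ together with the upper bound $\rho_n\le\varLambda$. The explicit constant $C(\Omega,\varLambda)$ then only depends on $\Omega$ (through the Poincar\'e--Wirtinger constant $C_\Omega$) and on $\varLambda$, as claimed.
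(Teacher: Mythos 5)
Your proof is correct, though the paper itself does not supply a proof of this lemma — it only cites \cite{poincare1-invent} (Desvillettes--Villani) and \cite{poincare2-feireisl} (Feireisl), where the corresponding statements are typically established by genuine compactness arguments (bounded sequence in $H^1$, Rellich, identification of the limit). Your argument, despite the advertised ``contradiction combined with a compactness argument,'' actually uses no compactness at all: every step is a triangle inequality, Cauchy--Schwarz, or Poincar\'e--Wirtinger. The contradiction wrapper is therefore unnecessary overhead — the same chain of estimates gives a direct, quantitative proof. Namely, writing $c=\int_\Omega \rho u\dd x$ and $\bar u=|\Omega|^{-1}\int_\Omega u\dd x$, the identity $\int_\Omega\rho\dd x=1$ gives $\bar u - c = -\int_\Omega\rho(u-\bar u)\dd x$, so $|\bar u - c|\le \varLambda^{1/2}\|u-\bar u\|_{L^2}$; hence
\begin{align*}
\int_\Omega\rho(u-c)^2\dd x \le \varLambda\|u-c\|_{L^2}^2
= \varLambda\bigl(\|u-\bar u\|_{L^2}^2+|\Omega|\,|\bar u-c|^2\bigr)
\le \varLambda(1+\varLambda|\Omega|)\,C_\Omega^2\,\|\nabla u\|_{L^2}^2,
\end{align*}
with $C_\Omega$ the Poincar\'e--Wirtinger constant. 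This makes the constant $C(\Omega,\varLambda)$ explicit, which is a genuine advantage over the compactness route. Two small caveats you should note: (i) Poincar\'e--Wirtinger requires $\Omega$ to be connected with enough boundary regularity (Lipschitz, or an extension domain); the lemma statement says only ``bounded domain,'' so this hypothesis is implicitly assumed and is harmless in the paper's application where $\Omega=\mathbb{T}^n$; (ii) when you normalize, you should observe that $\|\nabla u_n\|_{L^2}>0$ (otherwise $u_n$ is constant and the left-hand side vanishes because $\int_\Omega\rho_n\dd x=1$), so that $c_n$ is well defined — you implicitly use this but do not say it.
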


\begin{lemma}\label{poincare2}
	Let $u\in H^1(\Omega)$, $\gamma>1$, $\rho$ be a non-negative function and given two positive constants $\varrho, \kappa$  such that
	\begin{align*}
		0< \varrho \leq \int_{\Omega}\rho(x,t)\dd x, \quad\int_{\Omega}\rho^\gamma(x,t)\dd x\leq \kappa,
	\end{align*}
where $\Omega\subset\mathbb{R}^n$ is a bounded domain. Then there holds
\begin{align*}
	\|u\|_{L^2}^2\leq C(\kappa,\varrho)\left[\|\nabla u\|_{L^2}^2+\left(\int_{\Omega}\rho |u|\dd x\right)^2\right],
\end{align*}
where $C>0$ is an absolute constant depending only on $\kappa,\varrho$.
\end{lemma}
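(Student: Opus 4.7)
The plan is to argue by contradiction. If the stated inequality fails, then for each $k \in \mathbb{N}$ one can extract $u_k \in H^1(\Omega)$ and $\rho_k \geq 0$ (both satisfying the two integral constraints) for which
$\|u_k\|_{L^2}^2 > k\bigl[\|\nabla u_k\|_{L^2}^2 + (\int_\Omega \rho_k |u_k|\dd x)^2\bigr]$. Normalizing so that $\|u_k\|_{L^2} = 1$, I obtain along this sequence the vanishings $\|\nabla u_k\|_{L^2} \to 0$ and $\int_\Omega \rho_k |u_k|\dd x \to 0$, while the two integral constraints on $\rho_k$ still hold.

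The next step is a standard compactness reduction. Since $\{u_k\}$ is bounded in $H^1(\Omega)$, the Rellich--Kondrachov theorem supplies a subsequence converging in $L^2(\Omega)$ to a limit $u$. The vanishing of $\nabla u_k$ in $L^2$ forces $\nabla u = 0$ in the sense of distributions, so $u$ equals some nonzero constant $c$ on the connected domain $\Omega$, with $|c|^2 |\Omega| = 1$.

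The main obstacle, and the heart of the argument, is to derive a contradiction from $\int_\Omega \rho_k |u_k|\dd x \to 0$ without any strong compactness for the weights $\rho_k$; only the uniform bound $\|\rho_k\|_{L^\gamma} \leq \kappa^{1/\gamma}$ is available. I will exploit convergence in measure: on the set $E_k := \{x \in \Omega : |u_k(x) - c| \leq |c|/2\}$ one has the pointwise lower bound $|u_k| \geq |c|/2$, and by $L^2$-convergence $|\Omega \setminus E_k| \to 0$. The uniform $L^\gamma$ bound on $\rho_k$ combined with H\"older's inequality then controls the ``bad'' set:
\begin{equation*}
\int_{\Omega \setminus E_k} \rho_k \dd x \;\leq\; \kappa^{1/\gamma}\, |\Omega \setminus E_k|^{(\gamma-1)/\gamma} \;\longrightarrow\; 0,
\end{equation*}
so that $\int_{E_k} \rho_k \dd x \geq \varrho - o(1) \geq \varrho/2$ eventually. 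Combining these,
\begin{equation*}
\int_\Omega \rho_k |u_k| \dd x \;\geq\; \frac{|c|}{2} \int_{E_k} \rho_k \dd x \;\geq\; \frac{|c|\varrho}{4} \;>\; 0
\end{equation*}
for all sufficiently large $k$, contradicting the vanishing of the left-hand side. The role of the hypothesis $\gamma > 1$ enters precisely here: it makes the exponent $(\gamma-1)/\gamma$ strictly positive, which converts the measure-smallness $|\Omega \setminus E_k| \to 0$ into smallness of $\int_{\Omega \setminus E_k} \rho_k \dd x$. The resulting constant $C$ clearly depends only on $\kappa, \varrho$ (and on $\Omega, \gamma$), as claimed.
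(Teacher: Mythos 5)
Your proof is correct. The paper does not prove this lemma at all --- it only cites IV.2 of Desvillettes--Villani and Lemma 3.2 of Feireisl --- and your contradiction-plus-compactness argument is essentially the standard proof found in those references: normalize $\|u_k\|_{L^2}=1$, use Rellich--Kondrachov to pass to a nonzero constant limit $c$, and use the uniform $L^\gamma$ bound with $\gamma>1$ to show the weights $\rho_k$ cannot concentrate on the small exceptional set $\Omega\setminus E_k$, which forces $\int_\Omega \rho_k|u_k|\dd x\geq |c|\varrho/4$ and yields the contradiction. The only caveats are cosmetic: the compact embedding $H^1(\Omega)\hookrightarrow L^2(\Omega)$ requires some boundary regularity (e.g.\ a Lipschitz or extension domain), harmless here since the lemma is applied on $\mathbb{T}^n$, and, as you correctly note, the constant also depends on $\Omega$ and $\gamma$, consistent with how such statements are normally read.
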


\begin{lemma}\label{smooth function}
\cite{bahouri fourier}	Let $F$ be a smooth function vanishing at $0$, $s>0$ and $p,r\in[1,+\infty)$. If $f\in B^s_{p,r}(\mt^n)\cap L^\infty(\mt^n)$, then so does $F(f)$, and there exists a constant $C$ such that
	\begin{align*}
		\|F(f)\|_{B^s_{p,r}}\leq C\left(s,F',\|f\|_{L^\infty}\right)\|f\|_{B^s_{p,r}}.
	\end{align*}
Specifically, if $p=r=2$, we have $B^s_{2,2}(\mt^n)=H^s(\mt^n)$ and therefore
\begin{align*}
	\|F(f)\|_{H^s}\leq C\left(s,F',\|f\|_{L^\infty}\right)\|f\|_{H^s}.
\end{align*}
\end{lemma}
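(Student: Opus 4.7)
\textbf{Proof proposal for Lemma \ref{smooth function}.}

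The plan is to prove this classical composition estimate via a Littlewood-Paley decomposition combined with Meyer's telescoping trick, which is the standard route taken in Bahouri-Chemin-Danchin. Let $\{\Delta_j\}_{j\geq -1}$ be the dyadic frequency projectors on $\mathbb{T}^n$ and $S_j=\sum_{j'\leq j-1}\Delta_{j'}$ the associated low-frequency cut-offs, so that by definition
\begin{equation*}
\|g\|_{B^s_{p,r}}^r=\sum_{j\geq -1}2^{jsr}\|\Delta_j g\|_{L^p}^r.
\end{equation*}
Since $F$ is smooth with $F(0)=0$, I would exploit the telescoping identity
\begin{equation*}
F(f)=\sum_{j\geq -1}\bigl(F(S_{j+1}f)-F(S_j f)\bigr)=\sum_{j\geq -1}m_j\,\Delta_j f,\qquad m_j:=\int_0^1 F'\bigl(S_j f+\tau\Delta_j f\bigr)\,d\tau .
\end{equation*}
Bernstein's inequality together with the boundedness of $\Delta_{j'}$ on $L^\infty$ yields $\|S_j f\|_{L^\infty}+\|\Delta_j f\|_{L^\infty}\leq C\|f\|_{L^\infty}$, so smoothness of $F'$ gives the uniform bound $\|m_j\|_{L^\infty}\leq M:=C(F',\|f\|_{L^\infty})$. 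By the chain rule and the spectral localization of $S_jf$ and $\Delta_j f$, the derivatives satisfy $\|\nabla^N m_j\|_{L^\infty}\lesssim_N 2^{jN}M$ for every $N$.

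The second step is to decompose, for each fixed $k$, the bound on $\Delta_k F(f)=\sum_j \Delta_k(m_j\Delta_j f)$ according to whether $j\geq k-N_0$ or $j<k-N_0$ (with $N_0$ fixed by the support of the Littlewood-Paley annulus). In the high-frequency regime $j\geq k-N_0$, I would use the crude bound
\begin{equation*}
\|\Delta_k(m_j\Delta_j f)\|_{L^p}\leq C M\,\|\Delta_j f\|_{L^p}.
\end{equation*}
In the low-frequency regime $j<k-N_0$, where $m_j\Delta_j f$ itself is not spectrally localized near $2^k$, I would extract decay by integrating by parts against the kernel of $\Delta_k$ exactly $N$ times (equivalently, using that $\Delta_k$ is convolution with a function whose moments up to order $N-1$ vanish): since $\|\nabla^N(m_j\Delta_j f)\|_{L^p}\lesssim 2^{jN}M\|\Delta_j f\|_{L^p}$, this gives
\begin{equation*}
\|\Delta_k(m_j\Delta_j f)\|_{L^p}\leq C_N\,2^{-(k-j)N}M\,\|\Delta_j f\|_{L^p}.
\end{equation*}
Choosing any $N>s$, multiplying by $2^{ks}$, and summing in $j$ produces a discrete convolution of the sequence $(2^{js}\|\Delta_j f\|_{L^p})_j$ with an $\ell^1$ sequence in $k-j$. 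Young's inequality for sequences (valid for every $r\in[1,\infty)$) then yields
\begin{equation*}
\Bigl(\sum_k 2^{ksr}\|\Delta_k F(f)\|_{L^p}^r\Bigr)^{1/r}\leq C(s,F',\|f\|_{L^\infty})\,\|f\|_{B^s_{p,r}},
\end{equation*}
which is the required estimate; the specialization $p=r=2$ gives the $H^s$ bound via the identification $B^s_{2,2}=H^s$.

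The main obstacle will be the low-frequency regime: one must convert smoothness of the multiplier $m_j$ together with the vanishing moments of $\Delta_k$ into the decay factor $2^{-(k-j)N}$, and this is exactly the place where the hypothesis $s>0$ is used (to keep the geometric series $\sum_{j<k}2^{-(k-j)(N-s)}$ summable for some $N$ large enough). A minor subtlety worth stating cleanly in the write-up is that the constant $C(s,F',\|f\|_{L^\infty})$ depends on the $W^{N,\infty}$-norm of $F'$ on the interval $[-\|f\|_{L^\infty},\|f\|_{L^\infty}]$ for any integer $N>s$, which is finite since $F$ is smooth; this is what the abbreviated notation in the lemma records. Everything else (boundedness of $\Delta_j$ and $S_j$ on $L^p$ and $L^\infty$, Bernstein inequalities, the chain rule bound on $\nabla^N m_j$, and the characterization of Besov norms by dyadic blocks) is standard and can be invoked directly from \cite{bahouri fourier}.
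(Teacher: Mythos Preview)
Your proof proposal is correct and follows the standard Meyer telescoping argument from Bahouri--Chemin--Danchin. Note, however, that the paper does not supply its own proof of this lemma: it is simply stated with a citation to \cite{bahouri fourier} and used as a black box, so there is no ``paper's proof'' to compare against --- your write-up is precisely the argument the cited reference gives.
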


Recalling the definition of $I(a)$ and $k(a)$ in \eqref{definition}, the property of $P'(1+a)$ and \eqref{rho}, we know it is a smooth function satisfying $I(0)=0$, $K(0)=0$ and $a\in L^\infty$. As a result, according to Lemma \ref{smooth function} and some basic calculations, it holds that
\begin{align}\label{Ia}
	\|I(a)\|_{H^s}+\|K(a)\|_{H^s}\leq C\|a\|_{H^s},
\end{align}
and
\begin{align}\label{Ia1}
	\|\nabla I(a)\|_{L^\infty}\leq C\|\nabla a\|_{L^\infty},
\end{align}
where $C$ depends only on $s$ and $c_0$. The next lemma provides a commutator estimates that can be found in \cite{kato-1988-CommutatorEstimatesEuler,kenig}.

\begin{lemma}\label{commutator}
	Let $s>0,1<p<\infty$ and $\frac{1}{p}=\frac{1}{p_1}+\frac{1}{q_1}=\frac{1}{p_2}+\frac{1}{q_2}$.
	\begin{itemize}
		\item For any $f\in W^{1,p_1}\cap W^{s,q_2},g\in L^{p_2}\cap W^{s-1,q_1}$, there exists an absolute constant $C$ such that
		\begin{align*}
			\left\|\Lambda^s(f g)-f \Lambda^s g\right\|_{L^p} \leq C\left(\|\nabla f\|_{L^{p_1}}\left\|\Lambda^{s-1} g\right\|_{L^{q_1}}+\|g\|_{L^{p_2}}\left\|\Lambda^s f\right\|_{L^{q_2}}\right).
		\end{align*}
		\item  If $f\in L^{p_1}\cap W^{s,q_2},g\in L^{p_2}\cap W^{s,q_1}$, there is an absolute constant $C$ such that
		\begin{align*}
			\left\|\Lambda^s(f g)\right\|_{L^p} \leq C\left(\|f\|_{L^{p_1}}\left\|\Lambda^s g\right\|_{L^{q_1}}+\|g\|_{L^{p_2}}\left\|\Lambda^s f\right\|_{L^{q_2}}\right).
		\end{align*}
	\end{itemize}
\end{lemma}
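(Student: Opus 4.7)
I would prove both inequalities in parallel using a Littlewood–Paley decomposition together with Bony's paraproduct calculus. Write $f=\sum_{j}\Delta_j f$, $S_{j}=\sum_{k\le j-1}\Delta_k$, and decompose the product as $fg=T_fg+T_gf+R(f,g)$, where $T_fg=\sum_j S_{j-1}f\,\Delta_j g$ and $R(f,g)=\sum_{|j-j'|\le 1}\Delta_j f\,\Delta_{j'}g$. Each summand has controlled Fourier support: $T_fg$ is spectrally localized to annuli of radius $\sim 2^j$, while $R(f,g)$ is supported in balls of radius $\sim 2^j$. This, combined with Bernstein's inequality and the Mihlin multiplier theorem (so that $\Lambda^s$ behaves like $2^{js}$ on dyadic blocks), will reduce both inequalities to summing geometric series weighted by Littlewood–Paley square functions.

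For the product estimate (second bullet), I would bound $\|\Lambda^s T_fg\|_{L^p}$ by pulling $S_{j-1}f$ out in $L^{p_1}$, applying $\|\Lambda^s\Delta_j g\|_{L^{q_1}}\lesssim 2^{js}\|\Delta_j g\|_{L^{q_1}}$, and recombining via the $L^{q_1}$ Littlewood–Paley characterization to get $\|f\|_{L^{p_1}}\|\Lambda^s g\|_{L^{q_1}}$; the symmetric piece $\Lambda^s T_g f$ yields the second term, and $\Lambda^s R(f,g)$ is handled by placing the derivatives on whichever factor is convenient and using Hölder.

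For the commutator estimate (first bullet), the cancellation lives in the "low–high" paraproduct. I would write
\begin{align*}
\Lambda^s(fg)-f\Lambda^s g=[\Lambda^s,T_f]g+\Lambda^s T_gf+\Lambda^s R(f,g)-(f-S_{j-1}f\text{ remainders acting on }\Delta_j\Lambda^s g).
\end{align*}
The key point is $[\Lambda^s,T_f]g=\sum_j[\Lambda^s,S_{j-1}f]\Delta_j g$; expressing $[\Lambda^s,S_{j-1}f]\Delta_j g$ as a convolution against a smooth kernel, a first-order Taylor expansion of the symbol $|\xi|^s$ near the center of the annulus supporting $\Delta_j g$ produces a gain of one derivative on $f$ and loses one order on $\Lambda^s$, i.e.\ a pointwise control by $2^{j(s-1)} M(\nabla f)\,M(\Delta_j g)$ where $M$ is the Hardy–Littlewood maximal function. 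Summing in $j$ using the Fefferman–Stein vector-valued maximal inequality (valid because $1<p<\infty$) and Hölder with exponents $(p_1,q_1)$ produces the contribution $\|\nabla f\|_{L^{p_1}}\|\Lambda^{s-1}g\|_{L^{q_1}}$. The paraproduct $\Lambda^s T_g f$ and remainder $\Lambda^s R(f,g)$ directly yield $\|g\|_{L^{p_2}}\|\Lambda^s f\|_{L^{q_2}}$ by the product-estimate argument above.

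The main obstacle is the commutator term $[\Lambda^s,S_{j-1}f]\Delta_j g$: one must show pointwise (or at worst in $L^p$) that the kernel of this operator, viewed as a smooth multiplier of order $s-1$ against $\nabla f$ and $\Delta_j g$, has a convolution kernel with integrable decay uniformly in $j$ after the proper rescaling. This is the classical Coifman–Meyer-type symbol-calculus step. Once this pointwise bound is in hand, the summation in $j$ and the interpolation in the Hölder triple $(p,p_1,q_1)$ are routine, and the stated constants are absolute because no endpoint $p=1,\infty$ is involved.
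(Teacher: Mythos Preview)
The paper does not prove this lemma at all; it simply quotes the two inequalities from the literature, citing Kato--Ponce \cite{kato-1988-CommutatorEstimatesEuler} and Kenig--Ponce--Vega \cite{kenig}, and moves on. So there is no ``paper's proof'' to compare against.

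Your paraproduct sketch is the standard modern route to these Kato--Ponce inequalities and is essentially correct. The decomposition $fg=T_fg+T_gf+R(f,g)$, the treatment of $\Lambda^s T_gf$ and $\Lambda^s R(f,g)$ by spectral localization plus H\"older, and the handling of $[\Lambda^s,T_f]g$ by a first-order symbol expansion (yielding a gain of one derivative on $f$) followed by Fefferman--Stein are exactly the ingredients used in contemporary proofs. One small point: your displayed decomposition of $\Lambda^s(fg)-f\Lambda^s g$ is written a bit loosely; to be precise you also need to subtract $T_{\Lambda^s g}f+R(f,\Lambda^s g)$ coming from the paraproduct expansion of $f\cdot\Lambda^s g$, and the term $T_{\Lambda^s g}f$ is the one that, via Bernstein ($\|S_{j-1}\Lambda^s g\|\lesssim 2^{j}\|\Lambda^{s-1}g\|$ on frequencies $\lesssim 2^{j}$ combined with $\|\Delta_j f\|\lesssim 2^{-j}\|\Delta_j\nabla f\|$), also lands in the $\|\nabla f\|_{L^{p_1}}\|\Lambda^{s-1}g\|_{L^{q_1}}$ bucket. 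With that bookkeeping fixed, your outline goes through.
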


Finally, we introduce one Gagliardo-Nirenberg interpolation inequality that comes from \cite{brezis-2019-gag}.

\begin{lemma}\label{chazhi1}
	If $f\in W^{s_1,p_1}\bigcap W^{s_2,p_2}$, then
	\begin{align*}
		\|f\|_{W^{r,q}}\leq 	\|f\|^{\theta}_{W^{s_1,p_1}}	\|f\|^{1-\theta}_{W^{s_2,p_2}},
	\end{align*}
where the real numbers $0 \leq s_1 \leq s_2,~ r \geq 0$, $1 \leq p_1, p_2, q \leq \infty$, $\left(s_1, p_1\right) \neq\left(s_2, p_2\right)$ and $\theta \in(0,1)$ such that
\begin{align*}
	& r<s:=\theta s_1+(1-\theta) s_2,~ \frac{1}{q}=\left(\frac{\theta}{p_1}+\frac{1-\theta}{p_2}\right)-\frac{s-r}{n}.
\end{align*}
\end{lemma}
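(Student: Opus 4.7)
\textbf{Proof proposal for Lemma \ref{chazhi1}.} My plan is to establish the inequality via a Littlewood--Paley / dyadic frequency localization argument, following the Brezis--Mironescu strategy for the fractional Gagliardo--Nirenberg inequality. The core idea is that the exponents in the statement force a precise scaling balance between the two endpoint Sobolev norms, and this scaling will be matched dyadic block by dyadic block using Bernstein's inequality, after which the two endpoints are combined by splitting the frequency sum at an optimally chosen level.

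First, I would fix a Littlewood--Paley decomposition $f=\sum_{j\geq -1}\Delta_j f$ on $\mathbb{T}^n$ and recall the Bernstein inequality: for any $1\leq p\leq q\leq\infty$ and $j\geq 0$,
\begin{equation*}
\|\Lambda^r \Delta_j f\|_{L^q}\lesssim 2^{j r}\,2^{jn(1/p-1/q)}\|\Delta_j f\|_{L^p},
\end{equation*}
together with the essentially equivalent characterization $\|f\|_{W^{s,p}}\approx \|\Delta_{-1}f\|_{L^p}+\bigl(\sum_{j\geq 0} 2^{2js}\|\Delta_j f\|_{L^p}^2\bigr)^{1/2}$ for $1<p<\infty$ (and the analogous Besov-type bound at the endpoints, where one has to be careful). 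The crucial scaling observation is that the hypothesis $\tfrac{1}{q}=\tfrac{\theta}{p_1}+\tfrac{1-\theta}{p_2}-\tfrac{s-r}{n}$, combined with $s=\theta s_1+(1-\theta)s_2$, ensures
\begin{equation*}
\bigl(r+\tfrac{n}{p_1}-\tfrac{n}{q}-s_1\bigr)\theta+\bigl(r+\tfrac{n}{p_2}-\tfrac{n}{q}-s_2\bigr)(1-\theta)=r-s<0,
\end{equation*}
so the two dyadic exponents produced by Bernstein have opposite signs when averaged against $\theta$ and $1-\theta$. This is exactly what makes the interpolation work.

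Next, I would estimate each block $\|\Lambda^r \Delta_j f\|_{L^q}$ in two ways, using Bernstein combined with either $\|f\|_{W^{s_1,p_1}}$ or $\|f\|_{W^{s_2,p_2}}$, yielding
\begin{equation*}
\|\Lambda^r \Delta_j f\|_{L^q}\lesssim 2^{j(r+n/p_i-n/q-s_i)}\,\|f\|_{W^{s_i,p_i}},\qquad i=1,2.
\end{equation*}
Writing $\alpha_i:=s_i-r-n/p_i+n/q$, the above sign computation gives $\theta\alpha_1+(1-\theta)\alpha_2=s-r>0$, while $\alpha_1<0<\alpha_2$ in the generic situation. I would then split the dyadic sum at a threshold $J\in\mathbb{Z}$: on $\{j\leq J\}$ use the bound involving $\|f\|_{W^{s_1,p_1}}$, on $\{j>J\}$ the one involving $\|f\|_{W^{s_2,p_2}}$. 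Summing the geometric series and optimizing $J$ so that $2^{-J\alpha_1}\|f\|_{W^{s_1,p_1}}\sim 2^{-J\alpha_2}\|f\|_{W^{s_2,p_2}}$, i.e. $2^J\sim(\|f\|_{W^{s_1,p_1}}/\|f\|_{W^{s_2,p_2}})^{1/(\alpha_1-\alpha_2)}$, produces
\begin{equation*}
\|f\|_{W^{r,q}}\lesssim \|f\|_{W^{s_1,p_1}}^{\theta}\,\|f\|_{W^{s_2,p_2}}^{1-\theta},
\end{equation*}
the exponent $\theta$ arising precisely from $\alpha_2/(\alpha_2-\alpha_1)=\theta$ after unwinding the definitions.

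The main obstacle will be the passage from block estimates to the genuine $W^{r,q}$ norm at the endpoints $p_i,q\in\{1,\infty\}$, where $W^{s,p}$ is no longer equivalent to a Triebel--Lizorkin space and one must instead argue through Besov embeddings $B^{s}_{p,1}\hookrightarrow W^{s,p}\hookrightarrow B^{s}_{p,\infty}$ and accept a mild loss that the hypothesis $\theta\in(0,1)$ and $r<s$ absorb. A secondary subtlety is verifying that the condition $(s_1,p_1)\neq(s_2,p_2)$ guarantees $\alpha_1\neq\alpha_2$ so that the optimization in $J$ is non-degenerate. Once these two technical points are handled, the proof reduces to the dyadic bookkeeping outlined above; the zero-frequency block $\Delta_{-1}f$ is controlled trivially by either $W^{s_i,p_i}$ norm and interpolated by Hölder in the exponent $\theta$.
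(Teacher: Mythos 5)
The paper offers no proof of this lemma at all: it is quoted directly from Brezis--Mironescu \cite{brezis-2019-gag}, so there is no in-paper argument to compare yours against and I can only assess your sketch on its own terms. Your overall strategy (Littlewood--Paley blocks, two Bernstein bounds per block, splitting the dyadic sum at an optimized threshold) is the standard route to such inequalities, but your ``crucial scaling observation'' is arithmetically wrong, and the error is not cosmetic. Since $\frac{1}{q}=\frac{\theta}{p_1}+\frac{1-\theta}{p_2}-\frac{s-r}{n}$, one has $n\bigl(\frac{\theta}{p_1}+\frac{1-\theta}{p_2}\bigr)=\frac{n}{q}+(s-r)$, hence
\begin{equation*}
\theta\Bigl(r+\tfrac{n}{p_1}-\tfrac{n}{q}-s_1\Bigr)+(1-\theta)\Bigl(r+\tfrac{n}{p_2}-\tfrac{n}{q}-s_2\Bigr)=r+\tfrac{n}{q}+(s-r)-\tfrac{n}{q}-s=0,
\end{equation*}
not $r-s$; equivalently $\theta\alpha_1+(1-\theta)\alpha_2=0$. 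This matters twice. First, a nonzero weighted average does not force $\alpha_1$ and $\alpha_2$ to have opposite signs (both could lie on the same side of zero), whereas the correct value $0$ does force this whenever $\alpha_1\neq\alpha_2$. Second, your threshold optimization returns the exponent $\alpha_2/(\alpha_2-\alpha_1)$, and this equals $\theta$ \emph{precisely because} $\theta\alpha_1+(1-\theta)\alpha_2=0$; under the identity you actually wrote down, the optimized exponent would differ from $\theta$ and the argument would close with the wrong interpolation weights. Your final line is therefore inconsistent with your displayed computation, and the proof only works after the computation is corrected.

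Two further points are genuine gaps rather than routine technicalities. The block estimate $\|\Lambda^r\Delta_jf\|_{L^q}\lesssim 2^{j(r+n/p_i-n/q-s_i)}\|f\|_{W^{s_i,p_i}}$ uses Bernstein from $L^{p_i}$ to $L^q$, which requires $p_i\le q$; the hypotheses of the lemma allow $q<p_i$ (for instance $p_2=\infty$ with $q$ finite), where the claimed gain $2^{jn(1/p_i-1/q)}<1$ is false and the exponent bookkeeping changes. Moreover, $(s_1,p_1)\neq(s_2,p_2)$ does \emph{not} imply $\alpha_1\neq\alpha_2$: taking $(s_1,p_1)=(0,2)$ and $(s_2,p_2)=\bigl(1,\tfrac{2n}{n+2}\bigr)$ gives $s_1-\tfrac{n}{p_1}=s_2-\tfrac{n}{p_2}$, hence (by the zero-average identity) $\alpha_1=\alpha_2=0$, and the splitting degenerates; that scaling-critical case needs a separate argument and is exactly the regime in which \cite{brezis-2019-gag} show such inequalities become delicate. (Your claimed equivalence $\|f\|_{W^{s,p}}\approx\|\Delta_{-1}f\|_{L^p}+\bigl(\sum_j 2^{2js}\|\Delta_jf\|_{L^p}^2\bigr)^{1/2}$ is also only valid for $p=2$; for general $p$ one must work with the two-sided Besov embeddings, as you note later.) For the parameters this paper actually uses ($p_1=p_2=2$, $q\in\{2,\infty\}$) none of these obstructions occur, so a corrected version of your sketch would cover the paper's needs; proving the lemma in its stated generality is the content of \cite{brezis-2019-gag} and is harder than your outline suggests.
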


\section{A priori estimates}

\subsection{The Poincaré-type inequalities of $a,B,u$}
\,\,\,\,\,\,\,\,As a preparation, we first sort out all available Poincaré-type inequalities. By recalling the initial assumptions \eqref{rho0} and \eqref{b0} and using the equations \eqref{equation}, for any $t\geq 0$, one has
\begin{align}\label{modezero-aB}
	\int_{\mt^n}a(x,t)\dd x=\int_{\mt^n}B(x,t)\dd x=0.
\end{align}
Then, by utilizing Plancherel's theorem, for any $0\leq s_1\leq s_2$, it yields that
\begin{align}\nonumber
	\|\Lambda^{s_1}B(x,t)\|_{L^2}^2&=\sum_{|k|\neq 0,k\in \mathbb{Z}^n}|k|^{2s_1}\left|\w{B}(k,t)\right|^2\\\label{bpoincare}
	&\leq \sum_{|k|\neq 0,k\in \mathbb{Z}^n}|k|^{2s_2}\left|\w{B}(k,t)\right|^2=	\|\Lambda^{s_2}B(x,t)\|_{L^2}^2
\end{align}
and
\begin{align}\label{apoincare}
	\|\Lambda^{s_1}a(x,t)\|_{L^2}^2\leq	\|\Lambda^{s_2}a(x,t)\|_{L^2}^2.
\end{align}

Regarding the velocity $u$, unfortunately, due to the appearance of density $\rho$, its property of zero average integral dose not hold any more, which is quite different from the incompressible homogenous flow \cite{preprite}.  In this case, if one assumes
\begin{align*}
	\int_{\mathbb{T}^n}\rho_0u_0\dd x=0,
\end{align*}
then by utilizing \eqref{equation}$_1$ and \eqref{equation}$_2$, it follows that
\begin{align*}
	\int_{\mathbb{T}^n}\rho(x,t) u(x,t)\dd x=0.
\end{align*}
Thus, by applying Lemma \ref{poincare1}, we have
\begin{align*}
	\|\sqrt{\rho}u\|_{L^2}^2=\int_{\mt^n}\rho\left(u-\int_{\mt^n}\rho u\dd x\right)^2\dd x\leq C\|\nabla u\|_{L^2}^2,
\end{align*}
which further implies, after combining \eqref{rho} and Lemma \ref{poincare2}, that
\begin{align}\nonumber
	\|u\|_{L^2}^2&\leq C\left[\|\nabla u\|_{L^2}^2+\left(\int_{\mt^n}\rho |u|\dd x\right)^2\right]\\\nonumber
	&\leq C\|\nabla u\|_{L^2}^2 +C\left(\int_{\mt^n}|\rho|\dd x\int_{\mt^n}|\rho||u|^2\dd x\right)\\\label{upoincare}
	&\leq C\|\nabla u\|_{L^2}.
\end{align}
\subsection{Energy estimates}

\begin{lemma}\label{hign energy}
Assume that $(a, u, B) \in C([0, T]; H^m)$ is a solution to the system \eqref{equation}. Then for any number $\iota =0$ or $1\leq \iota\leq m$, it holds that
\begin{align}\nonumber
	&\frac{1}{2}	\frac{\dd}{\dd t}\|    (a,u,B)\|_{  H^\iota}^2+\frac{1}{4}c_0\lambda \|  \san u\|_{  H^\iota}^2+\frac{3}{16}c_0\mu \|\nabla   u\|_{  H^\iota}^2 \\\label{3.00}
	\leq& C\|   (a,u,B)\|_{  H^\iota}^2\left(\| (a,u,B)\|_{W^{1,\infty}}+\|(a,u,B)\|_{W^{1,\infty}}^2+\|(a,B)\|_{W^{1,\infty}}^4\right).
\end{align}
\end{lemma}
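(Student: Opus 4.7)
The plan is to perform standard energy estimates at regularity level $\iota$ on the system \eqref{equation}, exploiting the pairwise cancellations of the linear coupling and absorbing commutator errors from the variable viscosity into the dissipative norms. To begin, I would apply $\Lambda^\iota$ to each of the three evolution equations in \eqref{equation} (no operator is needed when $\iota=0$), take $L^2$ inner products against $\Lambda^\iota a$, $\Lambda^\iota u$ and $\Lambda^\iota B$ respectively, and sum.

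The structural observation is that the three linear cross-terms cancel in pairs. Since $\tb$ is a constant vector, integration by parts gives
\begin{align*}
\langle\Lambda^\iota\san u,\Lambda^\iota a\rangle+\langle\Lambda^\iota\nabla a,\Lambda^\iota u\rangle&=0,\\
\langle-\Lambda^\iota(\tb\cdot\nabla B),\Lambda^\iota u\rangle+\langle-\Lambda^\iota(\tb\cdot\nabla u),\Lambda^\iota B\rangle&=0,\\
\langle\Lambda^\iota\nabla(\tb\cdot B),\Lambda^\iota u\rangle+\langle\Lambda^\iota(\tb\,\san u),\Lambda^\iota B\rangle&=0,
\end{align*}
so only the viscous dissipation and the forcings $f_1,f_2,f_3$ survive. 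For the viscous part, integration by parts produces
\[
-\langle\Lambda^\iota\san(\bar\mu(a)\nabla u),\Lambda^\iota u\rangle=\langle\bar\mu(a)\Lambda^\iota\nabla u,\nabla\Lambda^\iota u\rangle+\langle[\Lambda^\iota,\bar\mu(a)]\nabla u,\nabla\Lambda^\iota u\rangle,
\]
and analogously for the $\bar\lambda(a)$ piece. The pointwise lower bounds $\bar\mu(a)\geq c_0\mu$ and $\bar\lambda(a)\geq c_0\lambda$ from \eqref{rho} and \eqref{definition} extract the full dissipative norms $c_0\mu\|\nabla u\|_{H^\iota}^2$ and $c_0\lambda\|\san u\|_{H^\iota}^2$. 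A small fraction of each is then sacrificed via Young's inequality to absorb the commutator errors, producing precisely the explicit coefficients $\tfrac{3}{16}c_0\mu$ and $\tfrac{1}{4}c_0\lambda$ displayed on the left-hand side of \eqref{3.00}.

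The nonlinear forcings are estimated using the Kato--Ponce product and commutator inequalities of Lemma \ref{commutator}, the composition bound of Lemma \ref{smooth function} applied to $I(a)$ and $k(a)$ (both vanishing at $0$), and the consequences \eqref{Ia}--\eqref{Ia1}. The bilinear convective pieces $u\cdot\nabla a$, $a\,\san u$, $u\cdot\nabla u$, $u\cdot\nabla B$, $B\cdot\nabla u$, $B\,\san u$ contribute the linear factor $\|(a,u,B)\|_{W^{1,\infty}}$; the quasilinear pieces $k(a)\nabla a$, $(\nabla I(a))\nabla u$, $(\nabla I(a))\san u$, $B\cdot\nabla B$ and $\nabla(B\cdot B)$ contribute the quadratic factor $\|(a,u,B)\|_{W^{1,\infty}}^2$; and the trilinear pieces in $f_2$, namely $I(a)(\tb\cdot\nabla B)$, $I(a)\nabla(\tb\cdot B)$, $I(a)(B\cdot\nabla B)$ and $I(a)\nabla(B\cdot B)$, which involve only $a$ and $B$, yield after suitable product-rule decomposition and Young's inequality the quartic factor $\|(a,B)\|_{W^{1,\infty}}^4$.

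The main obstacle is the treatment of these trilinear terms: because there is \emph{no} dissipation available for $B$, no top-order derivative of $B$ can be allowed to appear on the right. One therefore has to redistribute the derivative via integration by parts on carefully chosen combinations (for instance rewriting $B\cdot\nabla B=\tfrac12\nabla|B|^2$ and transferring the outer $\nabla$ onto $I(a)$ or onto $\Lambda^\iota u$) so that every surviving term reduces to $\|(a,u,B)\|_{H^\iota}^2$ multiplied by the appropriate power of $\|(a,B)\|_{W^{1,\infty}}$. The assumption \eqref{assumption} is essential throughout, as it ensures that $\bar\mu(a),\bar\lambda(a),I(a),k(a)$ and their derivatives remain uniformly bounded in time, so that the constant $C$ in \eqref{3.00} is independent of $t$.
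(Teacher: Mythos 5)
Your overall plan matches the paper's proof: apply $\Lambda^\iota$, exploit the pairwise cancellation of the linear cross-terms, handle the variable viscosities $\bar\mu(a),\bar\lambda(a)$ by commutators, estimate the forcings $f_1,f_2,f_3$ via Kato--Ponce (Lemma~\ref{commutator}) and the composition bounds \eqref{Ia}--\eqref{Ia1}, and classify the terms of $f_2$ by homogeneity to produce the linear, quadratic and quartic factors appearing on the right of \eqref{3.00}. That is the paper's argument in outline.

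One concrete step would fail as written: the identity $B\cdot\nabla B=\tfrac12\nabla|B|^2$ is false (it holds only when $\nabla\times B=0$). The manipulation the paper actually uses is $B\cdot\nabla B=\san(B\otimes B)$, which follows from $\nabla\cdot B=0$; together with $\nabla(B\cdot B)$ this writes the magnetic forcing as a total divergence, so that $\|\Lambda^{\iota-1}(B\cdot\nabla B-\nabla(B\cdot B))\|_{L^2}\leq C\|\Lambda^\iota B\|_{L^2}\|B\|_{L^\infty}$ and one derivative can be transferred onto the dissipative factor $\nabla\Lambda^\iota u$. Replace your identity by the Maxwell-stress form and the rest of your scheme goes through. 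Two further small slips that do not affect the argument: from \eqref{rho} and \eqref{definition} the pointwise lower bounds are $\bar\mu(a)\geq\tfrac12 c_0\mu$ and $\bar\lambda(a)\geq\tfrac12 c_0\lambda$ (not $c_0\mu$, $c_0\lambda$), and the asymmetry between $\tfrac{3}{16}c_0\mu$ and $\tfrac14 c_0\lambda$ in \eqref{3.00} is not explained by the commutator absorption alone (which reduces both $\tfrac12$ to $\tfrac14$); the extra $\tfrac{1}{16}c_0\mu$ is given up to absorb the $\|\nabla\Lambda^\iota u\|_{L^2}^2$ contributions that Young's inequality produces when estimating the nonlinear forcings $f_1,f_2,f_3$, while the $\san u$ dissipation is not tapped for that purpose.
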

\begin{proof}
{\bf When $\iota\geq 1$}, applying the operator $\Li$ on both sides of \eqref{equation}$_1$, \eqref{equation}$_2$ and \eqref{equation}$_3$, taking the inner product of resultants with $\Li a$, $\Li u$ and $\Li B$ respectively and adding them up, we have
\begin{align}\label{3.0}
		&\frac{1}{2}	\frac{\dd}{\dd t}\|\Li  (a,u,B)\|_{L^2}^2-\int_{\mt^n}\Li \san (\bar{\mu}(a) \nabla  u)\cdot\Li u\dd x-\int_{\mt^n}\Li \nabla\left( \bar{\lambda}(a) \nabla \cdot u\right)\cdot\Li u\dd x\notag\\
=&\sum_{j=1}^{9}I_j,
\end{align}
	where
	\begin{align*}
		I_1=&- \int_{\mathbb{T}^n} \Li  (\nabla\cdot  u) \cdot \Li a \mathrm{~d} x,
		~I_2= -\int_{\mathbb{T}^n}\Li (\nabla a) \cdot \Li  u \mathrm{~d} x, \\
		I_3=& - \int_{\mathbb{T}^n}\Li  \nabla(\tb \cdot  B) \cdot \Li  u\mathrm{~d} x,
		~I_4= \int_{\mathbb{T}^n} \Li  (\tb \cdot \nabla  B) \cdot \Li  u\mathrm{~d} x,\\
		I_5=& \int_{\mathbb{T}^n} \Li (\tb \cdot \nabla  u) \cdot \Li  B \mathrm{~d} x, ~I_6= -\int_{\mathbb{T}^n} \Li (\tb \nabla \cdot u)\cdot \Li  B\mathrm{~d} x,\\
		I_7=&\int_{\mathbb{T}^n} \Li  (f_1) \cdot \Li a \mathrm{~d} x,	~I_8=\int_{\mathbb{T}^n} \Li  (f_2) \cdot \Li u \mathrm{~d} x,	~I_9=\int_{\mathbb{T}^n} \Li  (f_3) \cdot \Li B \mathrm{~d} x.
	\end{align*}
	By some basic calculations and integration by parts, it holds
	\begin{align*}
		I_1+I_2&=- \int_{\mathbb{T}^n} \Li  (\nabla\cdot  u) \cdot \Li a \mathrm{~d} x -\int_{\mathbb{T}^n}\Li (\nabla a) \cdot \Li  u \mathrm{~d} x\\
		&=\int_{\mathbb{T}^n} \Li  u \cdot \nabla \Li a \mathrm{~d} x -\int_{\mathbb{T}^n}\Li (\nabla a) \cdot \Li  u \mathrm{~d} x=0,\\
		I_3+I_5&=- \int_{\mathbb{T}^n}\Li  \nabla(\tb \cdot  B) \cdot \Li  u\mathrm{~d} x+\int_{\mathbb{T}^n} \Li (\tb \cdot \nabla  u) \cdot \Li  B \mathrm{~d} x=0,\\
		I_4+I_6&=\int_{\mathbb{T}^n} \Li  (\tb \cdot \nabla  B) \cdot \Li  u\mathrm{~d} x-\int_{\mathbb{T}^n} \Li (\tb \nabla \cdot u)\cdot \Li  B\mathrm{~d} x=0,
	\end{align*}
and
	\begin{align}\nonumber
		&-\int_{\mt^n}\Li \san (\bar{\mu}(a) \nabla  u)\cdot\Li u\dd x=\int_{\mt^n}\Li  (\bar{\mu}(a) \nabla  u)\cdot\Li \nabla u\dd x\\
		=&\int_{\mt^n}\left[\Li  (\bar{\mu}(a) \nabla  u)-\bar{\mu}(a)\Li \nabla  u+\bar{\mu}(a)\Li \nabla  u\right]\cdot\Li \nabla u\dd x.\label{3.1}
	\end{align}
By virtue of \eqref{definition} and \eqref{rho}, it follows that
	\begin{align}
		\int_{\mt^n}\bar{\mu}(a)\Li \nabla  u\cdot\Li \nabla u\dd x\geq \frac{1}{2}c_0\mu \|\nabla \Lambda^{\iota} u\|_{L^2}^2.\label{3.2}
	\end{align}
Regarding the remaining terms of \eqref{3.1}, by making use of the following relation		
	\begin{align*}
		\bar{\mu}(a)-\mu=\mu\left(\dfrac{1}{1+a}-1\right)=-\mu\dfrac{a}{1+a}=-\mu I(a),
	\end{align*}
we have
	\begin{align*}
	&\int_{\mt^n}\left[\Li  (\bar{\mu}(a) \nabla  u)-\bar{\mu}(a)\Li \nabla  u\right]\cdot\Li \nabla u\dd x\\
=&\int_{\mt^n}\left[\Li,\bar{\mu}(a)\right]\nabla u\cdot\Li \nabla u\dd x\\
		=&\int_{\mt^n}\left[\Li,\bar{\mu}(a)-\mu\right]\nabla u\cdot\Li \nabla u\dd x\\
		=&-\int_{\mt^n}\left[\Li,\mu I(a)\right]\nabla u\cdot\Li \nabla u\dd x,
	\end{align*}
which yields, after employing Lemma \ref{commutator}, \eqref{Ia} and \eqref{Ia1}, that
\begin{align}\nonumber
&\left|	\int_{\mt^n}\left[\Li,\mu I(a)\right]\nabla u\cdot\Li \nabla u\dd x\right|\\\nonumber
\leq& C\|\nabla\Lambda^{\iota} u\|_{L^2}\left(\|\nabla I(a)\|_{L^\infty}\|\Li u\|_{L^2}+\|\nabla u\|_{L^\infty}\|\Li I(a)\|_{L^2}\right)\\\label{3.3}
\leq& \frac{1}{4}c_0\mu\|\nabla\Lambda^{\iota} u\|_{L^2}^2+C \left(\|\nabla a\|_{L^\infty}^2\|\Li u\|_{L^2}^2+\|\nabla u\|_{L^\infty}^2\|\Li a\|_{L^2}^2\right).
\end{align}
Substituting \eqref{3.2} and \eqref{3.3} into \eqref{3.1} leads to
\begin{align*}
		&-\int_{\mt^n}\Li \san (\bar{\mu}(a) \nabla  u)\cdot\Li u\dd x\\
		\geq& \frac{1}{4}c_0\mu \|\nabla\Lambda^{\iota} u\|_{L^2}^2-C \left(\|\nabla a\|_{L^\infty}^2\|\Li u\|_{L^2}^2+\|\nabla u\|_{L^\infty}^2\|\Li a\|_{L^2}^2\right).
\end{align*}

In a similar manner, one can estimate the third term on the left side of \eqref{3.0} that
\begin{align*}
	&-\int_{\mt^n}\Li \nabla \left( \bar{\lambda}(a) \nabla \cdot u\right)\cdot\Li u\dd x\\
	\geq& \frac{1}{4}c_0\lambda \|\Lambda^{\iota}\san u\|_{L^2}^2-C \left(\|\nabla a\|_{L^\infty}^2\|\Li u\|_{L^2}^2+\|\nabla u\|_{L^\infty}^2\|\Li a\|_{L^2}^2\right)
\end{align*}
Collecting all estimates above leads to
\begin{align}\nonumber
	&\frac{1}{2}	\frac{\dd}{\dd t}\|\Li  (a,u,B)\|_{L^2}^2+\frac{1}{4}c_0\lambda \|\Lambda^{\iota}\san u\|_{L^2}^2+\frac{1}{4}c_0\mu \|\nabla\Lambda^{\iota} u\|_{L^2}^2\\\nonumber
	\leq& C\left(\|\nabla a\|_{L^\infty}^2\|\Li u\|_{L^2}^2+\|\nabla u\|_{L^\infty}^2\|\Li a\|_{L^2}^2\right)+\int_{\mathbb{T}^n} \Li  f_1 \cdot \Li a \mathrm{~d} x\\\label{3.31}
	&+\int_{\mathbb{T}^n} \Li  f_2 \cdot \Li u \mathrm{~d} x+\int_{\mathbb{T}^n} \Li  f_3 \cdot \Li B \mathrm{~d} x.
\end{align}
To estimate the term involving $f_1$, by integration by parts and Lemma \ref{commutator}, we first obtain
\begin{align}\nonumber
	&\left|\int_{\mt^n}\Li \left(u \cdot \nabla a\right)  \cdot \Li a \mathrm{~d} x\right|\\\nonumber
	\leq& 	\left|\int_{\mt^n}\left(\Li \left(u \cdot \nabla a\right) -u\cdot \nabla \Li a \right) \cdot \Li a \mathrm{~d} x\right|+	\left|\int_{\mt^n}u\cdot \nabla \Li a \cdot \Li a \mathrm{~d} x\right|\\\nonumber
	\leq& C \|\Li a\|_{L^2}\left(\|\nabla u\|_{L^\infty}\|\Li a\|_{L^2}+\|\nabla a\|_{L^\infty}\|\Li u\|_{L^2}\right)+C\|\nabla u\|_{L^\infty}\|\Li a\|_{L^2}^2\\\label{3.4}
	\leq& C\|\nabla u\|_{L^\infty}\|\Li a\|_{L^2}^2+C \|\Li a\|_{L^2}\|\nabla a\|_{L^\infty}\|\Li u\|_{L^2}
\end{align}
and
\begin{align}\label{3.5}
	\left|\int_{\mt^n} \Li \left(a \nabla \cdot u\right) \cdot \Li a \mathrm{~d} x\right|\leq C\|\Li a\|_{L^2}\left(\|\Li a\|_{L^2}\|\nabla u\|_{L^\infty}+\|a\|_{L^\infty}\|\nabla\Lambda^{\iota}u\|_{L^2}\right),
\end{align}
then by recalling \eqref{nonlinearity} and combining \eqref{3.4} and \eqref{3.5}, it yields that
\begin{align}\nonumber
	&\left|\int_{\mt^n} \Li  f_1 \cdot \Li a \mathrm{~d} x\right|\\\nonumber
	=&	\left|\int_{\mt^n} \Li \left(u \cdot \nabla a+a \nabla \cdot u\right) \cdot \Li a \mathrm{~d} x\right|\\\label{3.51}
	\leq& 	\left|\int_{\mt^n}\Li \left(u \cdot \nabla a\right)  \cdot \Li a \mathrm{~d} x\right|+	\left|\int_{\mt^n} \Li \left(a \nabla \cdot u\right) \cdot \Li a \mathrm{~d} x\right|\\\nonumber
	\leq& C\|\nabla u\|_{L^\infty}\|\Li a\|_{L^2}^2+C\|\Li a\|_{L^2}\left(\|\nabla a\|_{L^\infty}\|\Lambda^{\iota}u\|_{L^2}+\|a\|_{L^\infty}\|\nabla\Lambda^{\iota}u\|_{L^2}\right)\\\nonumber
	\leq& \dfrac{c_0 \mu}{128}\|\nabla\Lambda^{\iota}u\|_{L^2}^2+C\left(\|\nabla u\|_{L^\infty}\|\Li a\|_{L^2}^2+\|\Li a\|_{L^2}\|\nabla a\|_{L^\infty}\|\Lambda^{\iota}u\|_{L^2}+\|a\|_{L^\infty}^2\|\Li a\|_{L^2}^2\right).
\end{align}

Similarly, one can estimate the term involving $f_3$ as
\begin{align}\label{3.52}
	&\left|\int_{\mathbb{T}^n} \Li  f_3 \cdot \Li B \mathrm{~d} x\right|\\
	\leq& \dfrac{c_0 \mu}{128}\|\nabla \Lambda^{\iota}u\|_{L^2}^2+C\left(\|\nabla u\|_{L^\infty}\|\Li B\|_{L^2}^2+\|\Li B\|_{L^2}\|\nabla B\|_{L^\infty}\|\Lambda^{\iota}u\|_{L^2}+\|B\|_{L^\infty}^2\|\Li B\|_{L^2}^2\right).\nonumber
\end{align}

It remains to deal with the term involving $f_2$. To achieve this, we first recall the definition of $f_2$ in \eqref{nonlinearity} as:
\begin{align*}
	\begin{aligned}
		f_2 \stackrel{\text { def }}{=} & - u \cdot \nabla  u+ B \cdot \nabla  B+
		\nabla( B \cdot  B)+k(a) \nabla a+\mu(\nabla I(a)) \nabla  u \\
		& +\lambda(\nabla I(a)) \nabla \cdot u-I(a)\left(\tb \cdot \nabla  B+ B \cdot \nabla  B-\nabla(\tb \cdot B) -\nabla( B \cdot  B)\right),
	\end{aligned}
\end{align*}
and then there holds
\begin{align}\label{3.6}
	\left|\int_{\mathbb{T}^n} \Li  f_2 \cdot \Li u \mathrm{~d} x\right|\leq K_1+K_2+K_3+K_4+K_5+K_6+K_7,
\end{align}
where
\begin{align*}
	&	K_1=\left|\int_{\mathbb{T}^n} \Li  (u\cdot\nabla u) \cdot \Li u \mathrm{~d} x\right|,\\
	&K_2=\left|\int_{\mathbb{T}^n} \Li   \left(B \cdot \nabla  B+
	\nabla( B \cdot  B)\right) \cdot \Li u \mathrm{~d} x\right|,\\
	&K_3= \left|\int_{\mathbb{T}^n} \Li  (k(a) \nabla a) \cdot \Li u \mathrm{~d} x\right|,\\
	&K_4=\left|\int_{\mathbb{T}^n} \Li  (\mu(\nabla I(a)) \nabla  u) \cdot \Li u \mathrm{~d} x\right|,\\
	&	K_5= \left|\int_{\mathbb{T}^n} \Li  (\lambda(\nabla I(a)) \nabla \cdot u) \cdot \Li u \mathrm{~d} x\right|,\\
	&K_6=\left|\int_{\mathbb{T}^n} \Li \left( I(a)\left(\tb \cdot \nabla  B-\nabla(\tb \cdot B) \right)\right) \cdot \Li u \mathrm{~d} x\right|,\\
	&	K_7= \left|\int_{\mathbb{T}^n} \Li  \left(I(a)\left( B \cdot \nabla  B-\nabla( B \cdot  B)\right) \right)\cdot \Li u \mathrm{~d} x\right|.
\end{align*}

Similar to \eqref{3.1}, the term $K_1$ can be bounded as
\begin{align*}
	K_1\leq C\|\nabla u\|_{L^\infty}\|\Li u\|_{L^2}^2.
\end{align*}
Considering that $B$ is divergence-free, we have
\begin{align*}
	K_2&=\left|\int_{\mathbb{T}^n} \Li  \san \left(B\otimes B\right) \cdot \Li u \mathrm{~d} x\right|+\left|\int_{\mathbb{T}^n} \Li \nabla \left(B\cdot B\right) \cdot \Li u \mathrm{~d} x\right|\\
	&\leq C \|\nabla\Lambda^{\iota}u\|_{L^2}\|\Li B\|_{L^2}\|B\|_{L^\infty}\\
	&\leq \dfrac{c_0\mu}{128}\|\nabla\Lambda^{\iota}u\|_{L^2}^2+C\|\Li B\|_{L^2}^2\|B\|_{L^\infty}^2.
\end{align*}
By integration by parts, Lemma \ref{commutator} and \eqref{Ia}, it yields that
\begin{align*}
	K_3&= \left|\int_{\mathbb{T}^n} \Li  (k(a) \nabla a) \cdot \Li u \mathrm{~d} x\right|\\
&= \left|\int_{\mathbb{T}^n}\san\nabla\Delta^{-1}\left(\Li  (k(a) \nabla a)\right) \cdot \Li u \mathrm{~d} x\right|\\
	&\leq C\|\nabla\Lambda^{\iota}u\|_{L^2}\left(\|\Li a\|_{L^2}\|k(a)\|_{L^\infty}+\|\Lambda^{\iota-1}k(a)\|_{L^2}\|\nabla a\|_{L^\infty}\right)\\
	&\leq \dfrac{c_0\mu}{128}\|\nabla\Lambda^{\iota}u\|_{L^2}^2+C\left(\|\Li a\|_{L^2}^2\|a\|_{L^\infty}^2+\|\Lambda^{\iota-1}a\|_{L^2}^2\|\nabla a\|_{L^\infty}^2\right).
\end{align*}
Using similar methods to estimate $K_3$, we can get
\begin{align*}
	K_4&\leq C\|\nabla\Lambda^{\iota}u\|_{L^2}\left(\|\Li I(a)\|_{L^2}\|\nabla u\|_{L^\infty}+\|\Lambda^{\iota}u\|_{L^2}\|\nabla I(a)\|_{L^\infty}\right)\\
	&\leq \dfrac{c_0\mu}{128}\|\nabla\Lambda^{\iota}u\|_{L^2}^2+C\left(\|\Li a\|_{L^2}^2\|\nabla u\|_{L^\infty}^2+\|\Lambda^{\iota}u\|_{L^2}^2\|\nabla a\|_{L^\infty}^2\right),
\end{align*}
\begin{align*}
	K_5\leq \dfrac{c_0\mu}{128}\|\nabla\Lambda^{\iota}u\|_{L^2}^2+C\left(\|\Li a\|_{L^2}^2\|\nabla u\|_{L^\infty}^2+\|\Lambda^{\iota}u\|_{L^2}^2\|\nabla a\|_{L^\infty}^2\right),
\end{align*}
and
\begin{align*}
	K_6&\leq C\|\nabla\Lambda^{\iota}u\|_{L^2}\left(\|\Lambda^{\iota-1} I(a)\|_{L^2}\|\nabla B\|_{L^\infty}+\|\Lambda^{\iota}B\|_{L^2}\|I(a)\|_{L^\infty}\right)\\
	&\leq \dfrac{c_0\mu}{128}\|\nabla\Lambda^{\iota}u\|_{L^2}^2+C\left(\|\Lambda^{\iota-1} a\|_{L^2}^2\|\nabla B\|_{L^\infty}^2+\|\Lambda^{\iota}B\|_{L^2}^2\|a\|_{L^\infty}^2\right).
\end{align*}
Employing similar tools in estimating $K_2$, there holds
\begin{align*}
	\|\Lambda^{\iota-1}\left(B \cdot \nabla  B-\nabla( B \cdot  B)\right)\|_{L^2}\leq C\|\Li B\|_{L^2}\|B\|_{L^\infty},
\end{align*}
and thus
\begin{align*}
	K_7&\leq C\|\nabla\Lambda^{\iota}u\|_{L^2}\left(\|I(a)\|_{L^\infty}\|\Li B\|_{L^2}\|B\|_{L^\infty}+\|\Lambda^{\iota-1}I(a)\|_{L^2}\|B\|_{L^\infty}\|\nabla B\|_{L^\infty}\right)\\
	&\leq \dfrac{c_0\mu}{128}\|\nabla\Lambda^{\iota}u\|_{L^2}^2+C\left(\|\Lambda^{\iota-1}a\|_{L^2}^2\|B\|_{L^\infty}^2\|\nabla B\|_{L^\infty}^2+\|a\|_{L^\infty}^2\|\Li B\|_{L^2}^2\|\nabla B\|_{L^\infty}^2\right).
\end{align*}
Collecting all estimates above leads to
\begin{align}\label{3.7}
	&\left|\int_{\mathbb{T}^n} \Li  f_2 \cdot \Li u \mathrm{~d} x\right|\\
	\leq& \dfrac{6c_0\mu}{128}\|\nabla\Lambda^{\iota}u\|_{L^2}^2
	+C\|\Li (a,u,B)\|_{L^2}^2\left(\|\nabla u\|_{L^\infty}+\|(a,u,B)\|_{W^{1,\infty}}^2+\|(a,B)\|_{W^{1,\infty}}^4\right).\nonumber
\end{align}

Finally, by combining \eqref{3.51}, \eqref{3.52} and \eqref{3.7}, we have
\begin{align}\label{3.53}
	&\left|	\int_{\mathbb{T}^n} \Li  f_1 \cdot \Li a \mathrm{~d} x\right|+\left|\int_{\mathbb{T}^n} \Li  f_2 \cdot \Li u \mathrm{~d} x\right|+\left|\int_{\mathbb{T}^n} \Li  f_3 \cdot \Li B \mathrm{~d} x\right|\\
	\leq& \dfrac{c_0\mu}{16}\|\nabla\Li u\|_{L^2}^2+C\|\Li (a,u,B)\|_{L^2}^2\left(\| (a,u,B)\|_{W^{1,\infty}}+\|(a,u,B)\|_{W^{1,\infty}}^2+\|(a,B)\|_{W^{1,\infty}}^4\right).\nonumber
\end{align}
Plugging \eqref{3.53} into \eqref{3.31} and applying the Poincaré-type inequalities \eqref{bpoincare} and \eqref{apoincare} for $B$ and $a$, it follows that
\begin{align}\nonumber
			&\frac{1}{2}	\frac{\dd}{\dd t}\|\Li  (a,u,B)\|_{L^2}^2+\frac{1}{4}c_0\lambda \|\Lambda^{\iota}\san u\|_{L^2}^2+\frac{3}{16}c_0\mu \|\nabla\Lambda^{\iota} u\|_{L^2}^2 \\\nonumber
\leq& C\|\Li (a,u,B)\|_{L^2}^2\left(\| (a,u,B)\|_{W^{1,\infty}}+\|(a,u,B)\|_{W^{1,\infty}}^2+\|(a,B)\|_{W^{1,\infty}}^4\right).
\end{align}

{\bf When $\iota=0$}, the difference from the case $\iota\geq1$ only lies in the estimations of terms $K_6$ and $K_7$. In this case, we estimate them as
\begin{align*}
	K_6=\left|\int_{\mathbb{T}^n}\left( I(a)\left(\tb \cdot \nabla  B-\nabla(\tb \cdot B) \right)\right) \cdot u \mathrm{~d} x\right|\leq C\|u\|_{L^2}\|a\|_{L^2}\|\nabla B\|_{L^\infty}
\end{align*}
and
\begin{align*}
	K_7&= \left|\int_{\mathbb{T}^n}  \left(I(a)\left( B \cdot \nabla  B-\nabla( B \cdot  B)\right) \right)\cdot u \mathrm{~d} x\right|\\
	&\leq C\|u\|_{L^2}\|a\|_{L^2}\|\nabla B\|_{L^\infty}\|B\|_{L^\infty}.
\end{align*}
Regarding the remaining terms, it suffices to follow a similar approach as in the case $\iota\geq 1$. Hence, we can deduce that
\begin{align}\nonumber
	&\frac{1}{2}	\frac{\dd}{\dd t}\|  (a,u,B)\|_{L^2}^2+\frac{1}{4}c_0\lambda \|\san u\|_{L^2}^2+\frac{3}{16}c_0\mu \|\nabla u\|_{L^2}^2 \\\label{3.8}
	\leq& C\|(a,u,B)\|_{L^2}^2\left(\| (a,u,B)\|_{W^{1,\infty}}+\|(a,u,B)\|_{W^{1,\infty}}^2+\|(a,B)\|_{W^{1,\infty}}^4\right).
\end{align}
Combining \eqref{3.7} and \eqref{3.8} leads to \eqref{3.00}, which completes all the proof.
\end{proof}

\subsection{The dissipation mechanism of $B$}

\,\,\,\,\,\,\,\,As mentioned in \cite{zhai-3m-compressible-nonresistive}, the original system \eqref{original} only have dissipation in the equations of velocity field, while the equations of magnetic field equation has no dissipation or damping. However, thanks to the Diophantine condition \eqref{Diophantine} satisfied by the background magnetic field, there is hidden dissipative mechanism in the perturbation system \eqref{equation0}-\eqref{h2}, which will be summarized in Lemma \ref{lem3}. In fact, for compressible MHD equations, because the equations of $u$ in \eqref{equation}$_1$ become much more complicated and it would be
exceptionally challenging to estimate them. To this end, we will construct the following coupling terms for subsequent estimates
\begin{align*}
	-\frac{\dd}{\dd t}\int_{\mathbb{T}^n}\left(\tb\cdot\nabla B\right)\cdot \Lambda^s \mathbb{P}u\dd x-\frac{\dd}{\dd t}\int_{\mathbb{T}^n}\left(\tb\cdot\nabla B\right)\cdot \mathbb{P}u\dd x,
\end{align*}
which is quite different from the incompressible case in \cite{preprite}. In this case, applying the Leray projection operator $\mathbb{P}$ on both sides of \eqref{equation}$_1$, it yields that
	\begin{align}
		\partial_t \mathbb{P}  u-\mu \Delta \mathbb{P}  u=  \tb \cdot \nabla  B+\mathbb{P} f_4,
\end{align}
where
\begin{align}
		f_4 \stackrel{\text { def }}{=} & - u \cdot \nabla  u+ B \cdot \nabla  B+ 	\nabla( B \cdot  B)+k(a) \nabla a -I(a)\left(\mu \Delta  u+\lambda \nabla \operatorname{div}  u\right)\nonumber\\
		&-I(a)\left(\tb \cdot \nabla  B+ B \cdot \nabla  B-\nabla(\tb \cdot B) -\nabla( B \cdot  B)\right).\label{f41}
\end{align}

Now, we get to state our main lemma in this subsection.

	\begin{lemma}\label{lem3}
	For any $s\geq 0$, the following estimate holds
	\begin{align}\nonumber
		&-\frac{\dd}{\dd t}\int_{\mathbb{T}^n}\left(\tb\cdot\nabla B\right)\cdot \Lambda^s \mathbb{P}u\dd x	-\frac{\dd}{\dd t}\int_{\mathbb{T}^n}\left(\tb\cdot\nabla B\right)\cdot \mathbb{P}u\dd x+c_1\|\tb\cdot\nabla B\|_{H^{\frac{s}{2}}}^2\\\nonumber
		\leq& \left(\dfrac{1}{2}\mu^2+2|\tb|^2\right)\|\nabla u\|_{H^{\frac{s}{2}+1}}+C\|(a,u,B)\|_{H^{\frac{s}{2}+1}}^2\left(\| (a,u,B)\|_{W^{1,\infty}}+\| (a,B)\|_{W^{1,\infty}}^2\right)\\\label{3.90}
		&+C\|B\|_{H^{\frac{s}{2}+1}}\left(\|a\|_{L^\infty}\|\nabla u\|_{H^{\frac{s}{2}+1}}+\|a\|_{H^\frac{s}{2}}\|\nabla^2 u\|_{L^\infty}+\|\nabla  u\|_{H^{\frac{s}{2}+1}}\|u\|_{L^\infty}\right).
	\end{align}
\end{lemma}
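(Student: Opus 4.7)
\textbf{Proof plan for Lemma \ref{lem3}.}

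The plan is to differentiate the two coupling quantities in time, substitute the evolution equations for $\mathbb{P}u$ and $B$, isolate the resonance term $-\int(\tb\cdot\nabla B)\cdot\Lambda^s(\tb\cdot\nabla B)\,dx$ as the source of the dissipation $\|\tb\cdot\nabla B\|_{H^{s/2}}^2$, and then bound the remaining cross-terms via Cauchy--Schwarz, Young, and Lemma \ref{commutator}. Concretely, using the product rule on $-\tfrac{d}{dt}\int(\tb\cdot\nabla B)\cdot\Lambda^s\mathbb{P}u\,dx$, I would substitute $B_t = \tb\cdot\nabla u - \tb\,\nabla\cdot u + f_3$ (from \eqref{equation}$_3$) and $(\mathbb{P}u)_t = \mu\Delta\mathbb{P}u + \tb\cdot\nabla B + \mathbb{P}f_4$ (from the $\mathbb{P}$-projected momentum equation). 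The crucial term $-\int(\tb\cdot\nabla B)\cdot\Lambda^s(\tb\cdot\nabla B)\,dx = -\|\Lambda^{s/2}(\tb\cdot\nabla B)\|_{L^2}^2$ (by Plancherel, since $\Lambda^s$ commutes with $\tb\cdot\nabla$) gives the top-order dissipation; the analogous computation for $-\tfrac{d}{dt}\int(\tb\cdot\nabla B)\cdot\mathbb{P}u\,dx$ yields $-\|\tb\cdot\nabla B\|_{L^2}^2$. Adding the two produces $-c_1\|\tb\cdot\nabla B\|_{H^{s/2}}^2$.

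Next I would handle the remaining linear cross-terms. The viscous mismatch $-\mu\int(\tb\cdot\nabla B)\cdot\Lambda^s\Delta\mathbb{P}u\,dx$ is bounded by Cauchy--Schwarz and Young as $\tfrac{1}{8}\|\tb\cdot\nabla B\|_{\dot H^{s/2}}^2 + \tfrac{\mu^2}{2}\|\nabla u\|_{H^{s/2+1}}^2$; the first piece is absorbed into the dissipation. The contributions from $\tb\cdot\nabla u$ and $\tb\,\nabla\cdot u$ in $B_t$ produce terms of the form $-\int \tb\cdot\nabla(\tb\cdot\nabla u)\cdot\Lambda^s\mathbb{P}u\,dx$: integrating by parts on $\tb\cdot\nabla$ and using that $\tb\cdot\nabla$ commutes with $\Lambda^s\mathbb{P}$, Plancherel bounds them by $|\tb|^2\|\nabla u\|_{\dot H^{s/2}}^2\le |\tb|^2\|\nabla u\|_{H^{s/2+1}}^2$. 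These pieces together yield the coefficient $\tfrac{1}{2}\mu^2+2|\tb|^2$ in front of $\|\nabla u\|_{H^{s/2+1}}^2$ (the statement appears to omit the square).

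For the nonlinear contributions I would test $\Lambda^{s/2}(\tb\cdot\nabla B)$ against $\Lambda^{s/2}f_3$ and $\Lambda^{s/2}\mathbb{P}f_4$, absorbing the resulting $\|\tb\cdot\nabla B\|_{\dot H^{s/2}}^2$ into the main dissipation. Recalling the expressions of $f_3$ and $f_4$ in \eqref{nonlinearity} and \eqref{f41}, the quadratic pieces $u\cdot\nabla u$, $B\cdot\nabla B$, $u\cdot\nabla B$, $B\cdot\nabla u$, $B\,\nabla\cdot u$, $k(a)\nabla a$ and the products $I(a)(\tb\cdot\nabla B),\ I(a)\nabla(\tb\cdot B),\ I(a)(B\cdot\nabla B),\ I(a)\nabla(B\cdot B),\ \nabla I(a)\cdot\nabla u$, all yield bounds of the form $\|(a,u,B)\|_{H^{s/2+1}}^2(\|(a,u,B)\|_{W^{1,\infty}}+\|(a,B)\|_{W^{1,\infty}}^2)$ by Lemma \ref{commutator} combined with Lemma \ref{smooth function} and the bounds \eqref{Ia}, \eqref{Ia1}. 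The delicate term is $I(a)(\mu\Delta u+\lambda\nabla\operatorname{div}u)$: applying the fractional Leibniz rule in Lemma \ref{commutator} to $\Lambda^{s/2}(I(a)\nabla^2 u)$ and distributing derivatives either onto $a$ or onto $u$, one gets the mixed-norm bound $\|a\|_{L^\infty}\|\nabla u\|_{H^{s/2+1}}+\|a\|_{H^{s/2}}\|\nabla^2 u\|_{L^\infty}$; multiplying by the factor $\|\tb\cdot\nabla B\|_{\dot H^{s/2}}\lesssim\|B\|_{H^{s/2+1}}$ gives the last line of the right-hand side, while the transport-type piece $u\cdot\nabla B$ (or $u\cdot\nabla u$) produces the $\|\nabla u\|_{H^{s/2+1}}\|u\|_{L^\infty}$ contribution.

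The main obstacle I anticipate is precisely the $I(a)\Delta u$ term: the dissipation we just extracted only controls $B$, not $u$ at $H^{s/2+2}$ level, so we cannot freely throw all derivatives on $u$. The trick will be to use the commutator estimate in Lemma \ref{commutator} with the two endpoints $p_1=q_2=\infty$ and $p_2=q_1=2$ to split the $s/2$ derivatives unevenly, matching exactly the $\|a\|_{L^\infty}\|\nabla u\|_{H^{s/2+1}}+\|a\|_{H^{s/2}}\|\nabla^2 u\|_{L^\infty}$ pattern of the statement. All remaining small constants produced by Young's inequality can be chosen so that the resulting absorption into the $\|\tb\cdot\nabla B\|_{H^{s/2}}^2$ dissipation still leaves a strictly positive coefficient $c_1$, completing the proof.
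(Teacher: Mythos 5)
Your proposal is correct and follows essentially the same route as the paper: split $-\tfrac{\dd}{\dd t}\int(\tb\cdot\nabla B)\cdot\Lambda^s\mathbb{P}u\dd x$ into a $\partial_t B$-piece and a $\partial_t\mathbb{P}u$-piece, extract the damping $-\|\Lambda^{s/2}(\tb\cdot\nabla B)\|_{L^2}^2$ from the latter, commutator-estimate the cross terms (with the $I(a)\Delta u$ term handled by splitting derivatives between $a$ and $u$ exactly as you describe), and add the $s=0$ version to upgrade $\dot H^{s/2}$ to $H^{s/2}$. You also correctly caught the typo: $(\tfrac12\mu^2+2|\tb|^2)$ should multiply $\|\nabla u\|_{H^{s/2+1}}^2$, not $\|\nabla u\|_{H^{s/2+1}}$. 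The only small shortcut you miss is that the paper kills $k(a)\nabla a$ outright by writing it as $\nabla g(a)$ so that $\mathbb{P}(k(a)\nabla a)=0$; your direct commutator estimate on that piece would still land inside the stated bound, so this is cosmetic, not a gap.
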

\begin{proof}
	Trough some basic calculations, it follows that
	\begin{align}\nonumber
		-\frac{\dd}{\dd t}\int_{\mathbb{T}^n}\left(\tb\cdot\nabla B\right)\cdot \Lambda^s \mathbb{P}u\dd x&=-\int_{\mathbb{T}^n}\left(\tb\cdot\nabla \partial_t B\right)\cdot \Lambda^s \mathbb{P}u-\left(\tb\cdot\nabla B\right)\cdot \Lambda^s \partial_t \mathbb{P}u\dd x\\
		&\triangleq J_1+J_2.\label{jj}
	\end{align}
	
	To estimate $J_1$, we first use \eqref{equation}$_3$ to update it as
	\begin{align}\nonumber
		J_1&=-\int_{\mathbb{T}^n}\left(\tb\cdot\nabla \left(\tb \cdot \nabla  u-\tb \nabla \cdot u+f_3\right)\right)\cdot \Lambda^s \mathbb{P}u\dd x\\\label{J1}
		&\leq 2|\tb|^2\|\nabla\Lambda^{\frac{s}{2}}u\|_{L^2}^2+\left|\int_{\mathbb{T}^n}\left(\tb\cdot\nabla f_3\right)\cdot \Lambda^s \mathbb{P}u\dd x\right|.
	\end{align}
Noticing $\nabla \cdot B=0$, we can rewrite $f_3$ as:
	\begin{align*}
		f_3&=- u \cdot \nabla  B+ B \cdot \nabla  u- B \nabla \cdot u\\
&= - u \cdot \nabla  B+ B \cdot \nabla  u- B \nabla \cdot u+u\nabla\cdot B\\
		&=\nabla\times (u\times B),
	\end{align*}
which imply
	\begin{align}
		&\left|\int_{\mathbb{T}^n}\left(\tb\cdot\nabla f_3\right)\cdot \Lambda^s \mathbb{P}u\dd x\right|\nonumber\\
=&\left|\int_{\mathbb{T}^n}\left(\tb\cdot\nabla \left[\nabla\times \left(u\times B\right)\right]\right)\cdot \Lambda^s \mathbb{P}u\dd x\right|\nonumber\\
		\leq& |\tb| \|\nabla \Lambda^\frac{s}{2}u\|_{L^2}\left(\|\nabla \Lambda^\frac{s}{2}u\|_{L^2}\|B\|_{L^\infty}+\|\nabla \Lambda^\frac{s}{2}B\|_{L^2}\|u\|_{L^\infty}\right).\label{J01}
	\end{align}
Plugging \eqref{J01} into \eqref{J1}, we have
	\begin{align}\label{j1}
		J_1\leq 2|\tb|^2\|\nabla\Lambda^{\frac{s}{2}+1}u\|_{L^2}^2+C\left(\|\Lambda^{\frac{s}{2}+1}u\|_{L^2}^2+\|\Lambda^{\frac{s}{2}+1}B\|_{L^2}^2\right)\left(\| u\|_{L^\infty}+\|B\|_{L^\infty}\right).
	\end{align}
	
	Similar to estimating $J_1$, one can use \eqref{equation}$_2$ and integrations by parts to get
	\begin{align}\nonumber
		J_2&=-\int_{\mathbb{T}^n}\left(\tb\cdot\nabla B\right)\cdot \Lambda^s\left(\mu \Delta \mathbb{P}  u+  \tb \cdot \nabla  B+\mathbb{P} f_4\right)\dd x\notag\\\nonumber
		&=-\left\|\Lambda^\frac{s}{2}\left(\tb\cdot \nabla B\right)\right\|_{L^2}^2-\int_{\mathbb{T}^n}\left(\tb\cdot\nabla B\right)\cdot \Lambda^s\left(\mu \Delta \mathbb{P}  u+\mathbb{P} f_4\right)\dd x\\\label{J2}
		&\leq -\frac{1}{2}\left\|\Lambda^\frac{s}{2}\left(\tb\cdot \nabla B\right)\right\|_{L^2}^2+\dfrac{1}{2}\mu^2\|\nabla \Lambda^{\frac{s}{2}+1}u\|_{L^2}^2+\left|\int_{\mathbb{T}^n}\left(\tb\cdot \nabla B\right)\cdot \Lambda^s \mathbb{P}f_4 \dd x\right|.
	\end{align}
Recalling the definition of $f_4$ in \eqref{f41}, we can rewrite  the last term of \eqref{J2} as
	\begin{align}\label{J}
		\left|\int_{\mathbb{T}^n}\left(\tb\cdot \nabla B\right)\cdot \Lambda^s \mathbb{P}f_4 \dd x\right|\leq J_{21}+J_{22}+J_{23}+J_{24}+J_{25}+J_{26}+J_{27},
	\end{align}
where
	\begin{align*}
		&J_{21}=	\left|\int_{\mathbb{T}^n}\left(\tb\cdot \nabla B\right)\cdot \Lambda^s \mathbb{P}\left(u \cdot \nabla  u\right)\dd x\right|,\\
		&J_{22}=\left|\int_{\mathbb{T}^n}\left(\tb\cdot \nabla B\right)\cdot \Lambda^s \mathbb{P}\left(B \cdot \nabla  B\right)\dd x\right|,\\
		&J_{23}=\left|\int_{\mathbb{T}^n}\left(\tb\cdot \nabla B\right)\cdot \Lambda^s \mathbb{P}\left(\nabla( B \cdot  B)\right)\dd x\right|,\\
		&J_{24}=\left|\int_{\mathbb{T}^n}\left(\tb\cdot \nabla B\right)\cdot \Lambda^s \mathbb{P}\left(k(a) \nabla a \right)\dd x\right|,\\
		&J_{25}=\left|\int_{\mathbb{T}^n}\left(\tb\cdot \nabla B\right)\cdot \Lambda^s \mathbb{P}\left(I(a)\left(\mu \Delta  u+\lambda \nabla \operatorname{div}  u\right)\right)\dd x\right|,\\
		&J_{26}=\left|\int_{\mathbb{T}^n}\left(\tb\cdot \nabla B\right)\cdot \Lambda^s \mathbb{P}\left(I(a)\left( B \cdot \nabla  B-\nabla( B \cdot  B)\right)\right)\dd x\right|,\\
		&J_{27}=\left|\int_{\mathbb{T}^n}\left(\tb\cdot \nabla B\right)\cdot \Lambda^s \mathbb{P}\left(I(a)\left(\tb \cdot \nabla  B-\nabla(\tb \cdot B)\right)\right)\dd x\right|.
	\end{align*}
Making use of integration by parts and Lemma \ref{commutator}, it follows that
	\begin{align*}
		J_{21}&=	\left|\int_{\mathbb{T}^n}\left(\tb\cdot \nabla B\right)\cdot \Lambda^s \mathbb{P}\left(u \cdot \nabla  u\right)\dd x\right|\\
		&\leq C
		\left\|\Lambda^{\frac{s}{2}-1}\left(\tb\cdot \nabla B\right)\right\|_{L^2}\|\Lambda^{\frac{s}{2}+1}(u\cdot\nabla u)\|_{L^2}\\
		&\leq C\|\Lambda^\frac{s}{2}B\|_{L^2}\left(\|\Lambda^{\frac{s}{2}+1} u\|_{L^2}\|\nabla u\|_{L^\infty}+\|\nabla\Lambda^{\frac{s}{2}+1} u\|_{L^2}\|u\|_{L^\infty}\right)
	\end{align*}
and
	\begin{align*}
		J_{22}+J_{23}&=\left|\int_{\mathbb{T}^n}\left(\tb\cdot \nabla B\right)\cdot \Lambda^s \mathbb{P}\left(B \cdot \nabla  B\right)\dd x\right|+\left|\int_{\mathbb{T}^n}\left(\tb\cdot \nabla B\right)\cdot \Lambda^s \mathbb{P}\left(\nabla( B \cdot  B)\right)\dd x\right|\\
		&\leq C
		\left\|\Lambda^{\frac{s}{2}}\left(\tb\cdot \nabla B\right)\right\|_{L^2}\left(\|\Lambda^{\frac{s}{2}}\nabla\left(B\cdot B\right)\|_{L^2}+\|\Lambda^{\frac{s}{2}}\nabla\left(B\otimes B\right)\|_{L^2}\right)\\
		&\leq C\|\Lambda^{\frac{s}{2}+1}B\|_{L^2}\|\Lambda^{\frac{s}{2}+1}B\|_{L^2}\|B\|_{L^\infty}.
	\end{align*}
According to the definition of pressure, $P=P(\rho)=P(a+1)$ is a smooth function satisfying $P'(1)=1$. Setting
	\begin{align}\label{ga}
		g(a)=\int_0^a \left(\dfrac{P'(r+1)}{r+1}-1\right)\dd r,
	\end{align}
	then it clearly holds that $g(0)=g'(0)=0$ and $g(a)\sim a^2$. Thanks to \eqref{ga}, we can deduce that $k(a)\nabla a=\nabla g(a)$ and therefore
	\begin{align*}
		\mathbb{P}(k(a)\nabla a)=0 \Rightarrow J_{24}=\left|\int_{\mathbb{T}^n}\left(\tb\cdot \nabla B\right)\cdot \Lambda^s \mathbb{P}\left(k(a) \nabla a \right)\dd x\right|=0.
	\end{align*}
Employing \eqref{Ia}, \eqref{Ia1} and Lemma \ref{commutator}, one has
	\begin{align*}
		J_{25}&=\left|\int_{\mathbb{T}^n}\left(\tb\cdot \nabla B\right)\cdot \Lambda^s \mathbb{P}\left(I(a)\left(\mu \Delta  u+\lambda \nabla \operatorname{div}  u\right)\right)\dd x\right|\\
		&\leq C\|\Lambda^{\frac{s}{2}+1}B\|_{L^2}\left(\|I(a)\|_{L^\infty}\|\nabla^2 \Lambda^{\frac{s}{2}}u\|_{L^2}+\|\Lambda^{\frac{s}{2}}I(a)\|_{L^2}\|\nabla^2 u\|_{L^\infty}\right)\\
		&\leq C\|\Lambda^{\frac{s}{2}+1}B\|_{L^2}\left(\|a\|_{L^\infty}\|\nabla^2 \Lambda^{\frac{s}{2}}u\|_{L^2}+\|\Lambda^{\frac{s}{2}}a\|_{L^2}\|\nabla^2 u\|_{L^\infty}\right),
	\end{align*}
	\begin{align*}
		J_{26}&=\left|\int_{\mathbb{T}^n}\left(\tb\cdot \nabla B\right)\cdot \Lambda^s \mathbb{P}\left(I(a)\left[ B \cdot \nabla  B-\nabla( B \cdot  B)\right]\right)\dd x\right|\\
		&\leq C\|\Lambda^{\frac{s}{2}+1}B\|_{L^2}\left(\|I(a)\|_{L^\infty}\|\Lambda^{\frac{s}{2}+1}(B\otimes B)\|_{L^2}+\|\Lambda^{\frac{s}{2}}I(a)\|_{L^2}\|\nabla (B\otimes B)\|_{L^\infty}\right)\\
		&\leq C\|\Lambda^{\frac{s}{2}+1}B\|_{L^2}\left(\|a\|_{L^\infty}\|\Lambda^{\frac{s}{2}+1}B\|_{L^2}\|B\|_{L^\infty}+\|\Lambda^{\frac{s}{2}}a\|_{L^2}\|\nabla B\|_{L^\infty}\|B\|_{L^\infty}\right)
	\end{align*}
and
	\begin{align*}
		J_{27}&=\left|\int_{\mathbb{T}^n}\left(\tb\cdot \nabla B\right)\cdot \Lambda^s \mathbb{P}\left(I(a)\left[\tb \cdot \nabla  B-\nabla(\tb \cdot B)\right]\right)\dd x\right|	\\
		&\leq C\|\Lambda^{\frac{s}{2}+1}B\|_{L^2}\left(\|a\|_{L^\infty}\|\Lambda^{\frac{s}{2}+1}B\|_{L^2}+\|\Lambda^{\frac{s}{2}}a\|_{L^2}\|\nabla B\|_{L^\infty}\right).
	\end{align*}

Collecting the estimates above and plugging the resultants into \eqref{J}, we obtain
	\begin{align}\label{3.9}
		&	\left|\int_{\mathbb{T}^n}\left(\tb\cdot \nabla B\right)\cdot \Lambda^s \mathbb{P}f_4 \dd x\right|\\
		\leq& C\|\Lambda^{\frac{s}{2}+1}B\|_{L^2}\left(\|a\|_{L^\infty}\|\nabla^2 \Lambda^{\frac{s}{2}}u\|_{L^2}+\|\Lambda^{\frac{s}{2}}a\|_{L^2}\|\nabla^2 u\|_{L^\infty}+\|\nabla\Lambda^{\frac{s}{2}+1} u\|_{L^2}\|u\|_{L^\infty}\right)\nonumber\\
		&+ C\|\Lambda^{\frac{s}{2}+1}B\|_{L^2}\|\Lambda^{\frac{s}{2}+1}(a,u,B)\|_{L^2}\left(\| (a,u,B)\|_{W^{1,\infty}}+\| B\|_{W^{1,\infty}}^2+\| a\|_{W^{1,\infty}}^2\right),\nonumber
	\end{align}
which further implies, after combining with \eqref{J2}, that
	\begin{align}\nonumber
		J_2&\leq -\frac{1}{2}\left\|\Lambda^\frac{s}{2}\left(\tb\cdot \nabla B\right)\right\|_{L^2}^2+\dfrac{1}{2}\mu^2\|\nabla\Lambda^{\frac{s}{2}+1}u\|_{L^2}^2\\\nonumber
		&~~+ C\|\Lambda^{\frac{s}{2}+1}B\|_{L^2}\left(\|a\|_{L^\infty}\|\nabla^2 \Lambda^{\frac{s}{2}}u\|_{L^2}+\|\Lambda^{\frac{s}{2}}a\|_{L^2}\|\nabla^2 u\|_{L^\infty}+\|\nabla\Lambda^{\frac{s}{2}+1} u\|_{L^2}\|u\|_{L^\infty}\right)\\\label{j2}
		&~~+C\|\Lambda^{\frac{s}{2}+1}(a,u,B)\|_{L^2}^2\left(\| (a,u,B)\|_{W^{1,\infty}}+\| B\|_{W^{1,\infty}}^2+\| a\|_{W^{1,\infty}}^2\right).
	\end{align}

Inserting \eqref{j1} and \eqref{j2} into \eqref{jj}, it follows that
		\begin{align}\label{3.10}
		&-\frac{\dd}{\dd t}\int_{\mathbb{T}^n}\left(\tb\cdot\nabla B\right)\cdot \Lambda^s \mathbb{P}u\dd x\nonumber\\
\leq& -\frac{1}{2}\left\|\Lambda^\frac{s}{2}\left(\tb\cdot \nabla B\right)\right\|_{L^2}^2+\left(\dfrac{1}{2}\mu^2+2|\tb|^2\right)\|\nabla\Lambda^{\frac{s}{2}+1}u\|_{L^2}^2\nonumber\\
		&+ C\|\Lambda^{\frac{s}{2}+1}B\|_{L^2}\left(\|a\|_{L^\infty}\|\nabla^2 \Lambda^{\frac{s}{2}}u\|_{L^2}+\|\Lambda^{\frac{s}{2}}a\|_{L^2}\|\nabla^2 u\|_{L^\infty}+\|\nabla\Lambda^{\frac{s}{2}+1} u\|_{L^2}\|u\|_{L^\infty}\right)\nonumber\\
		&+C\|\Lambda^{\frac{s}{2}+1}(a,u,B)\|_{L^2}^2\left(\| (a,u,B)\|_{W^{1,\infty}}+\| B\|_{W^{1,\infty}}^2+\| a\|_{W^{1,\infty}}^2\right).
	\end{align}
For the special case $s=0$ of \eqref{3.10}, one can use similar approach to get
\begin{align}\nonumber
	&-\frac{\dd}{\dd t}\int_{\mathbb{T}^n}\left(\tb\cdot\nabla B\right)\cdot  \mathbb{P}u\dd x\\\nonumber
\leq& -\frac{1}{2}\left\|\left(\tb\cdot \nabla B\right)\right\|_{L^2}^2+\left(\dfrac{1}{2}\mu^2+2|\tb|^2\right)\|\nabla\Lambda u\|_{L^2}^2\\\nonumber
	&+ C\|\Lambda B\|_{L^2}\left(\|a\|_{L^\infty}\|\nabla^2 u\|_{L^2}+\|a\|_{L^2}\|\nabla^2 u\|_{L^\infty}+\|\nabla \Lambda u\|_{L^2}\|u\|_{L^\infty}\right)\\\label{3.11}
	&+C\|\Lambda(a,u,B)\|_{L^2}^2\left(\| (a,u,B)\|_{W^{1,\infty}}+\| B\|_{W^{1,\infty}}^2+\| a\|_{W^{1,\infty}}^2\right).
\end{align}
Finally, applying the following inequality
\begin{align*}
	-\frac{1}{2}\left\|\Lambda^\frac{s}{2}\left(\tb\cdot \nabla B\right)\right\|_{L^2}^2-\frac{1}{2}\|\tb\cdot\nabla B\|_{L^2}^2\leq -c_1\|\tb\cdot\nabla B\|_{H^{\frac{s}{2}}}^2,
\end{align*}
we can infer \eqref{3.90} from \eqref{3.10} and \eqref{3.11}, that finish all the proof.
\end{proof}

\subsection{The dissipation mechanism of $a+\tb\cdot B $}
\,\,\,\,\,\,\,\,In the previous subsection, we focused on the dissipation mechanism of $B$ arising from the background magnetic field. In this subsection, we will shift our focus on the dissipation mechanism of quantity $a+\tb\cdot B$. As a matter of fact, we will inherit the idea by Wu and Zhai in \cite{zhai-3m-compressible-nonresistive} and present it here with minor modification and our analysis for integrity of this paper and readers' convenience.

First, taking the inner product on both sides of \eqref{equation}$_3$ with $\tb$, it follows that
\begin{align*}
	\partial_t\left(\tb\cdot B\right)=\left(\tb\cdot\nabla \right)u\cdot\tb-|\tb|^2\nabla\cdot u+f_3\cdot\tb,
\end{align*}
which together with \eqref{equation}$_1$ yield
\begin{align}\label{d}
	\partial_t\left(a+\tb\cdot B \right)=\left(\tb\cdot\nabla \right)u\cdot\tb-\left(|\tb|^2+1\right)\nabla\cdot u+f_1+f_3\cdot\tb.
\end{align}
In \eqref{d}, one can notice that $\nabla\cdot u$ is a hidden dissipative term, and satisfies $\san{\nabla \san}{\Delta}^{-1}u=\nabla\cdot\mathbb{D}u=\nabla\cdot u$. Now, by recalling the following equations satisfied by $\mathbb{D}u$ in \eqref{nabla}
	\begin{align}\label{Du}
			\partial_t \mathbb{D}  u-(\lambda+\mu) \Delta \mathbb{D}  u+\nabla a+\nabla(  \tb \cdot  B)=\mathbb{D} f_4.
\end{align}
we can discover that \eqref{Du} contains the term $\nabla\left(a+\tb\cdot B\right)$, which involves a wave structure for $a+\tb\cdot B$ and $\mathbb{D}u$. To utilize this point better, we set
\begin{align}\label{definitionsigma}
	w \stackrel{\text { def }}{=} a+\tb\cdot B, \quad \sigma \stackrel{\text { def }}{=} \mathbb{D} u-\dfrac{1}{\lambda+\mu}\Delta^{-1}\nabla w,
\end{align}
which satisfies
\begin{align}\label{poincarew}
	\int_{\mt^n}a(x,t)\dd x=\int_{\mt^n}B(x,t)\dd x=0 \Rightarrow \int_{\mt^n}w(x,t) \dd x=0,
\end{align}
for any $t>0$, due to \eqref{rho0} and \eqref{b0}. According to the theory of Fourier analysis, the inverse Laplacian $\Delta^{-1}$ of a smooth periodic function $f$ with mean zero is given by the Fourier series formula
\begin{equation}\label{inversedelta}
\Delta^{-1}f(x)=\sum_{|k|\neq 0,k\in \mathbb{Z}^n}\frac{1}{-4\pi|k|^2}\hat{f}(k)e^{2\pi ik\cdot x}.
\end{equation}
Considering ${\nabla \san}\,u(x,t)$ and $\nabla w(x,t)$ have mean zero automatically, we can clearly infer that
\begin{align*}
	\int_{\mt^n}\mathbb{D} u\dd x=\int_{\mt^n}{\nabla\Delta^{-1} \san}\,u(x,t)\dd x=0,\quad \int_{\mt^n} \Delta^{-1}{\nabla}w(x,t)\dd x=0,
\end{align*}
and hence,
\begin{align}\label{zero}
	\int_{\mt^n}\sigma(x,t)\dd x=0.
\end{align}
\eqref{poincarew} and \eqref{zero} indicate that the Poincaré-type inequalities also hold for $w$ and $\sigma$ in $\mt^n$.

Combining \eqref{d} and \eqref{Du} and making use of the definitions of $w$ and $\sigma$ in \eqref{definitionsigma}, we can establish the following system
\begin{align}\label{wsigma}
	 \renewcommand{\arraystretch}{2}
	\left\{\begin{array}{l}
	\partial_t w+\dfrac{1}{\lambda+\mu}\left(|\tb|^2+1\right) w+\left(|\tb|^2+1\right) \operatorname{div} \sigma=F_1,\\
	\partial_t \sigma-(\lambda+\mu) \Delta \sigma-\dfrac{1}{\lambda+\mu}\left(|\tb|^2+1\right) \mathbb{D} u=F_2,
\end{array}\right.
\end{align}
where
\begin{align}\label{F1}
	&F_1=\tb \cdot \nabla u\cdot \tb+f_1+f_3 \cdot \tb,\\\label{F2}
	&F_2=-\dfrac{1}{\lambda+\mu} \Delta^{-1} \nabla(\tb \cdot \nabla u \cdot \tb)+\mathbb{D} f_4-\dfrac{1}{\lambda+\mu} \Delta^{-1} \nabla\left(f_1+f_3 \cdot \tb\right).
\end{align}
\begin{lemma}\label{lem2}
	Let
\begin{align*}
	\bar{w}=\sqrt{\dfrac{3\left(\lambda+\mu\right)}{8C_1}}w,
\end{align*}
it holds that
\begin{align}\nonumber
	&\dfrac{\dd}{\dd t}\left(\| \sigma\|_{H^\iota}^2+\| \bar{w}\|_{H^\iota}^2\right)+(\lambda+\mu)\|\nabla   \sigma\|_{  H^\iota}^2+\dfrac{1}{2(\lambda+\mu)}\|\bar{w}\|_{  H^\iota}^2\\\nonumber
	\leq& C_4\|\nabla  u\|_{  H^\iota}+C\|\nabla  (a,u,B)\|_{  H^\iota}^2\|(a,u,B)\|_{L^\infty}^2\\\nonumber
	&+C \left(\|\nabla  u\|_{  H^\iota}^2\|u\|_{L^\infty}^2+\|a\|_{L^\infty}^2\|\nabla  u\|_{  H^\iota}^2+\|a\|_{H^{\iota-1}}^2\|\nabla^2 u\|_{L^\infty}^2\right)\\\label{4.0}
	&+C\|  (a,u,B)\|_{  H^\iota}^2\left(\|(a,u,B)\|_{W^{1,\infty}}^2+\|(a,B)\|_{W^{1,\infty}}^4\right).
\end{align}
for any $\iota\geq 1$, and
\begin{align*}\nonumber
	&\dfrac{\dd}{\dd t}\left(\|  \sigma\|_{L^2}^2+\|  \bar{w}\|_{L^2}^2\right)+(\lambda+\mu)\|\nabla   \sigma\|_{L^2}^2+\dfrac{1}{2(\lambda+\mu)}\|  \bar{w}\|_{L^2}^2\\\nonumber
	\leq& C_4\|\nabla  u\|_{L^2}+C\|\nabla  (a,u,B)\|_{L^2}^2\|(a,u,B)\|_{L^\infty}^2\\\nonumber
	&+C \left(\|\nabla  u\|_{L^2}^2\|u\|_{L^\infty}^2+\|a\|_{L^\infty}^2\|\nabla  u\|_{L^2}^2+\|a\|_{L^2}^2\|\nabla^2 u\|_{L^\infty}^2\right)\\
	&+C\|  (a,u,B)\|_{L^2}^2\left(\|(a,u,B)\|_{W^{1,\infty}}^2+\|(a,B)\|_{W^{1,\infty}}^4\right),
\end{align*}
where $C_4>0$ is an absolute constant.
\end{lemma}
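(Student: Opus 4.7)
The plan is to run parallel $H^\iota$ energy estimates on the wave-type system \eqref{wsigma} for $w$ and $\sigma$, and then rescale $w\mapsto\bar w$ so that the inter-equation coupling can be fully absorbed into the two dissipative mechanisms. Concretely, I apply $\Li$ to both equations in \eqref{wsigma} and take the $L^2$ inner products with $\Li w$ and $\Li\sigma$ respectively, then sum. The damping term in the $w$-equation yields $\frac{|\tb|^2+1}{\lambda+\mu}\|\Li w\|_{L^2}^2$ and the parabolic term in the $\sigma$-equation yields $(\lambda+\mu)\|\nabla\Li\sigma\|_{L^2}^2$; these are the two good terms I must preserve. Since $\int_{\mt^n} w\,\dd x=\int_{\mt^n}\sigma\,\dd x=0$ by \eqref{poincarew}--\eqref{zero}, standard Poincar\'e inequalities such as $\|\Li\sigma\|_{L^2}\le C\|\nabla\Li\sigma\|_{L^2}$ are available throughout.

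Two linear coupling terms require care. The first, $(|\tb|^2+1)\int_{\mt^n}\Li\san\sigma\cdot\Li w\,\dd x$, is split by Young's inequality into $\tfrac{\lambda+\mu}{4}\|\nabla\Li\sigma\|_{L^2}^2$ (absorbed by the $\sigma$-diffusion) and a multiple of $\|\Li w\|_{L^2}^2$; the latter can be absorbed by the $w$-damping only after the rescaling $\bar w=\sqrt{\tfrac{3(\lambda+\mu)}{8C_1}}\,w$, with $C_1$ chosen large enough (depending on $|\tb|$, $\lambda$, $\mu$) so that at least $\tfrac{1}{2(\lambda+\mu)}\|\bar w\|_{H^\iota}^2$ of damping survives, matching the dissipation in the stated inequality. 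The second coupling term $-\tfrac{|\tb|^2+1}{\lambda+\mu}\int_{\mt^n}\Li\mathbb{D}u\cdot\Li\sigma\,\dd x$ only involves $u$, so I bound it via Young by $\varepsilon\|\Li\sigma\|_{L^2}^2+C\|\Li\mathbb{D}u\|_{L^2}^2$, soak the first piece into the $\sigma$-diffusion using Poincar\'e, and use $\|\mathbb{D}u\|_{H^\iota}\le\|u\|_{H^\iota}\le C\|\nabla u\|_{H^\iota}$ (via the Poincar\'e inequalities of Subsection 3.1) to produce the stated forcing $C_4\|\nabla u\|_{H^\iota}^2$.

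It remains to control the nonlinear pairings $\int F_1\,\Li w\,\dd x$ and $\int F_2\,\Li\sigma\,\dd x$ with $F_1,F_2$ as in \eqref{F1}--\eqref{F2}. The linear pieces $\tb\cdot\nabla u\cdot\tb$ and $-\tfrac{1}{\lambda+\mu}\Delta^{-1}\nabla(\tb\cdot\nabla u\cdot\tb)$ contribute directly to the $C_4\|\nabla u\|_{H^\iota}^2$ term, using that $\Delta^{-1}\nabla$ is smoothing by one order. The genuinely nonlinear pieces reduce to bounding $f_1$, $f_3$, and $\mathbb{D}f_4$ in $H^\iota$; these are handled by the same tools already deployed in Lemma \ref{hign energy} and Lemma \ref{lem3}: the commutator estimates of Lemma \ref{commutator}, the composition bounds \eqref{Ia}--\eqref{Ia1} for $I(a)$ and $k(a)$, and the divergence-free structure of $B$. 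The main technical obstacle is the careful distribution of derivatives so that the output norms match the specific form on the right of \eqref{4.0}: in particular, the commutator acting on $\mathbb{D}(I(a)(\mu\Delta u+\lambda\nabla\san u))$ is what produces precisely the terms $\|a\|_{L^\infty}^2\|\nabla u\|_{H^\iota}^2$ and $\|a\|_{H^{\iota-1}}^2\|\nabla^2 u\|_{L^\infty}^2$, while all remaining contributions must be collected into the combined factor $\|(a,u,B)\|_{H^\iota}^2\bigl(\|(a,u,B)\|_{W^{1,\infty}}^2+\|(a,B)\|_{W^{1,\infty}}^4\bigr)$. The case $\iota=0$ is handled by direct H\"older estimates without commutators, exactly analogous to the $\iota=0$ discussion at the end of Lemma \ref{hign energy}.
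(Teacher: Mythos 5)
Your proposal is essentially the same as the paper's proof: apply $\Li$ to both equations of \eqref{wsigma}, run $L^2$ energy on $\Li w$ and $\Li\sigma$, treat the two cross-coupling terms by Young's inequality, and rescale $w$ to $\bar w$ so that the residual $\|\nabla\Li\sigma\|_{L^2}^2$ contribution from the $w$-side is dominated by the $\sigma$-diffusion while the $w$-damping survives at the stated level; the nonlinear pairings with $F_1,F_2$ are then handled by the same commutator, composition, and divergence-free tools already used in Lemmas~\ref{hign energy} and \ref{lem3}, with the $\iota=0$ case treated separately by direct H\"older as you indicate. The only minor difference is bookkeeping order: the paper derives \eqref{hign w} and \eqref{hign sigma} separately (with $C_1$ appearing as the Young constant from putting a small $\tfrac{1}{8(\lambda+\mu)}$ coefficient on the $\|\Li w\|_{L^2}^2$ piece) and only afterwards multiplies the $w$-estimate by $\tfrac{3(\lambda+\mu)}{8C_1}$, whereas you scale $w$ up front; both orderings are equivalent. (Note your $C_4\|\nabla u\|_{H^\iota}^2$ with the square is correct; the unsquared power in the paper's statement of the lemma and in \eqref{4.7}--\eqref{4.9} is evidently a typo, since it must match the dissipation $\|\nabla u\|_{H^\iota}^2$ when \eqref{4.0} is later combined with \eqref{3.00}.)
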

\begin{proof}
{\bf When $\iota \geq1 $}, applying the operator $\Lambda^\iota$ to the equation \eqref{wsigma}$_1$ and multiplying the resultant by $\Lambda^\iota w$ yields
\begin{align} \nonumber
		&\dfrac{1}{2}\dfrac{\dd}{\dd t}\|\Li w\|_{L^2}^2+\dfrac{1}{\lambda+\mu}\left(|\tb|^2+1\right)\|\Li w\|_{L^2}^2\\\label{4.1}
	=&-(|\tb|^2+1)\int_{\mathbb{T}^n}\Li \san\sigma\cdot \Li w\dd x+\int_{\mathbb{T}^n}\Li F_1\cdot \Li w\dd x
\end{align}
To estimate the last term of \eqref{4.1}, by recalling the definition of $F_1$, one has
\begin{align}\nonumber
	&\int_{\mathbb{T}^n}\Li F_1\cdot \Li w\dd x\\
	\leq& \|\Li w\|_{L^2}\|\Li F_1\|_{L^2}\nonumber\\
	\leq& C \|\Li w\|_{L^2}\left(\|\nabla\Lambda^{\iota} u\|_{L^2}+\left\|\Li\left(f_1+f_3\cdot\tb\right)\right\|_{L^2} \right).\label{F11}
\end{align}
Noticing that $f_1$ and $f_3\cdot\tb$ can be rewritten as
\begin{align*}
	f_1&=-u\cdot \nabla a-a\nabla\cdot u=-\nabla \cdot(au),\\
	f_3\cdot \tb&=\left(-u\cdot\nabla B+B\cdot\nabla u-B\nabla\cdot u\right)\cdot\tb\\
	&=-u\cdot\nabla \left(B\cdot \tb\right)+\left(B\cdot\nabla u\right)\cdot\tb -\left(B\cdot \tb\right)\nabla\cdot u\\
	&=-\nabla\cdot\left(u(B\cdot\tb)\right)+B\cdot\nabla(u\cdot\tb)\\
	&=-\nabla\cdot\left(u(B\cdot\tb)\right)+\nabla\cdot\left(B(u\cdot \tb)\right),
\end{align*}
we can estimate the last term in \ref{F11} as follows
\begin{align}\label{4.21}
	\left\|\Li\left(f_1+f_3\cdot\tb\right)\right\|_{L^2}\leq |\tb|\|\nabla\Lambda^{\iota}(a,u,B)\|_{L^2}\|(a,u,B)\|_{L^\infty}.
\end{align}
Combing \eqref{4.1} and \eqref{4.21}, it follows that
	\begin{align*}
	&\dfrac{1}{2}\dfrac{\dd}{\dd t}\|\Li w\|_{L^2}^2+\dfrac{1}{\lambda+\mu}\left(|\tb|^2+1\right)\|\Li w\|_{L^2}^2\\
	\leq& C\|\nabla\Lambda^{\iota} \sigma\|_{L^2}\|\Li w\|_{L^2}+C \|\Li w\|_{L^2}\left(\|\nabla \Lambda^{\iota} u\|_{L^2}+\|\nabla\Lambda^{\iota}(a,u,B)\|_{L^2}\|(a,u,B)\|_{L^\infty} \right)\\
	\leq& \dfrac{1}{8(\lambda+\mu)}\|\Li w\|_{L^2}^2+C_1\left(\|\nabla \Lambda^{\iota} u\|_{L^2}^2+\|\nabla\Lambda^{\iota} \sigma\|_{L^2}^2+\|\nabla\Lambda^{\iota}(a,u,B)\|_{L^2}^2\|(a,u,B)\|_{L^\infty}^2  \right),
\end{align*}
which implies, after subtracting the first term on the right side, that
\begin{align}\nonumber
	&\dfrac{1}{2}\dfrac{\dd}{\dd t}\|\Li w\|_{L^2}^2+\dfrac{1}{2(\lambda+\mu)}\left(|\tb|^2+1\right)\|\Li w\|_{L^2}^2\\\label{hign w}
	\leq& C_1\left(\|\nabla \Lambda^{\iota} u\|_{L^2}^2+\|\nabla\Lambda^{\iota} \sigma\|_{L^2}^2+\|\nabla\Lambda^{\iota}(a,u,B)\|_{L^2}^2\|(a,u,B)\|_{L^\infty}^2  \right).
\end{align}

Again, applying the operator $\Lambda^\iota$ to the equation \eqref{wsigma}$_2$ and taking inner product of the resultant with $\Li \sigma$ yields
\begin{align}\nonumber
	&\dfrac{1}{2}\dfrac{\dd}{\dd t}\|\Li \sigma\|_{L^2}^2+(\lambda+\mu)\|\nabla\Lambda^{\iota} \sigma\|_{L^2}^2\\\label{4.2}
	=&\dfrac{(|\tb|^2+1)}{\lambda+\mu}\int_{\mt^n}\Li \mathbb{D}u\cdot\Li \sigma\dd x+\int_{\mt^n}\Li F_2\cdot\Li \sigma\dd x.
\end{align}
Recalling the definition of $F_2$ in \eqref{F2} and integration by parts, we have
\begin{align}\nonumber
	&\left|	\int_{\mt^n}\Li F_2\cdot\Li \sigma\dd x \right|\\\label{4.4}
	\leq& C\left(\|\nabla\Lambda^{\iota}u\|_{L^2}+\left\|\Lambda^{\iota}\left(f_1+f_3\cdot\tb\right)\right\|_{L^2}\right)\|\Lambda^{\iota-1}\sigma\|_{L^2}+\left|\int_{\mt^n}\Li\mathbb{D}f_4\cdot\Li \sigma\dd x\right|.
\end{align}
To estimate the last term in \eqref{4.4}, we first recall the following definition of $f_4$ in \eqref{f41}
as
	\begin{align*}
	\begin{aligned}
		f_4 \stackrel{\text { def }}{=} & - u \cdot \nabla  u+ B \cdot \nabla  B+ 	\nabla( B \cdot  B)+k(a) \nabla a -I(a)\left(\mu \Delta  u+\lambda \nabla \operatorname{div}  u\right)\\
		& -I(a)\left(\tb \cdot \nabla  B+ B \cdot \nabla  B-\nabla(\tb \cdot B) -\nabla( B \cdot  B)\right),
	\end{aligned}
\end{align*}
and then inserting it into the last term of \eqref{4.4} gives
\begin{align}\label{4.6}
	\left|\int_{\mt^n}\Li\mathbb{D}f_4\cdot\Li \sigma\dd x\right|\leq Z_1+Z_2+Z_3+Z_4+Z_5+Z_6,
\end{align}
where
\begin{align*}
	&Z_1=	\left|\int_{\mt^n}\Li\mathbb{D}\left(- u \cdot \nabla  u\right)\cdot\Li \sigma\dd x\right|,\\
	&Z_2=\left|\int_{\mt^n}\Li\mathbb{D}\left(B \cdot \nabla  B+ 	\nabla( B \cdot  B)\right)\cdot\Li \sigma\dd x\right|,\\
	&Z_3=\left|\int_{\mt^n}\Li\mathbb{D}\left(k(a) \nabla a \right)\cdot\Li \sigma\dd x\right|,\\
	& Z_4=\left|\int_{\mt^n}\Li\mathbb{D}\left(-I(a)\left(\mu \Delta  u+\lambda \nabla \operatorname{div}  u\right)\right)\cdot\Li \sigma\dd x\right|,\\
	&Z_5=\left|\int_{\mt^n}\Li\mathbb{D}\left(-I(a)\left(\tb \cdot \nabla  B-\nabla(\tb \cdot B) \right)\right)\cdot\Li \sigma\dd x\right|,\\
	&Z_6=\left|\int_{\mt^n}\Li\mathbb{D}\left(-I(a)\left(B \cdot \nabla  B-\nabla(B \cdot B) \right)\right)\cdot\Li \sigma\dd x\right|.
\end{align*}
For the first and second terms, applying Lemma \ref{commutator} and integration by parts, it follows that
\begin{align*}
	Z_1=	\left|\int_{\mt^n}\Li\mathbb{D}\left(- u \cdot \nabla  u\right)\cdot\Li \sigma\dd x\right|\leq C\|\Lambda^{\iota}\sigma\|_{L^2}\left(\|\Lambda^{\iota}u\|_{L^2}\|\nabla u\|_{L^\infty}+\|\nabla\Li u\|_{L^2}\|u\|_{L^\infty}\right),
\end{align*}
and
\begin{align*}
	Z_2=\left|\int_{\mt^n}\Li\mathbb{D}\left(B \cdot \nabla  B+ 	\nabla( B \cdot  B)\right)\cdot\Li \sigma\dd x\right|\leq C\|\nabla\Lambda^{\iota}\sigma\|_{L^2}\|\Li B\|_{L^2}\|B\|_{L^\infty}.
\end{align*}
Similar as before, one can use \eqref{Ia}-\eqref{Ia1} to estimate the terms $Z_3$--$Z_6$ as follows,
\begin{align*}
	Z_3&=\left|\int_{\mt^n}\Li\mathbb{D}\left(k(a) \nabla a \right)\cdot\Li \sigma\dd x\right|\\
	&\leq C\|\nabla\Lambda^{\iota}\sigma\|_{L^2}\left(\|\Lambda^{\iota-1}k(a)\|_{L^2}\|\nabla a\|_{L^\infty}+\|\Li a\|_{L^2}\|k(a)\|_{L^\infty}\right)\\
	&\leq C\|\nabla\Lambda^{\iota}\sigma\|_{L^2}\left(\|\Lambda^{\iota-1}a\|_{L^2}\|\nabla a\|_{L^\infty}+\|\Li a\|_{L^2}\|a\|_{L^\infty}\right),
\end{align*}
\begin{align*}
	Z_4&=\left|\int_{\mt^n}\Li\mathbb{D}\left(-I(a)\left(\mu \Delta  u+\lambda \nabla \operatorname{div}  u\right)\right)\cdot\Li \sigma\dd x\right|\\
	&\leq C\|\nabla\Lambda^{\iota}\sigma\|_{L^2}\left(\|I(a)\|_{L^\infty}\|\nabla\Lambda^{\iota}u\|_{L^2}+\|\Lambda^{\iota-1}I(a)\|_{L^2}\|\nabla^2 u\|_{L^\infty}\right)\\
	&\leq C\|\nabla\Lambda^{\iota}\sigma\|_{L^2}\left(\|a\|_{L^\infty}\|\nabla\Lambda^{\iota}u\|_{L^2}+\|\Lambda^{\iota-1}a\|_{L^2}\|\nabla^2 u\|_{L^\infty}\right),
\end{align*}
\begin{align*}
	Z_5&=\left|\int_{\mt^n}\Li\mathbb{D}\left(-I(a)\left(\tb \cdot \nabla  B-\nabla(\tb \cdot B) \right)\right)\cdot\Li \sigma\dd x\right|\\
	&\leq C\|\nabla\Lambda^{\iota}\sigma\|_{L^2}\left(\|I(a)\|_{L^\infty}\|\Lambda^{\iota}B\|_{L^2}+\|\Lambda^{\iota-1}I(a)\|_{L^2}\|\Lambda B\|_{L^\infty}\right)\\
	&\leq C\|\nabla\Lambda^{\iota}\sigma\|_{L^2}\left(\|a\|_{L^\infty}\|\Lambda^{\iota}B\|_{L^2}+\|\Lambda^{\iota-1}a\|_{L^2}\|\nabla B\|_{L^\infty}\right),
\end{align*}
and
\begin{align*}
	Z_6&=\left|\int_{\mt^n}\Li\mathbb{D}\left(-I(a)\left(B \cdot \nabla  B-\nabla(B \cdot B) \right)\right)\cdot\Li \sigma\dd x\right|\\
	&\leq C\|\nabla\Lambda^{\iota}\sigma\|_{L^2}\|I(a)\|_{L^\infty}\|\Lambda^{\iota-1}\left(B \cdot \nabla  B-\nabla(B \cdot B)\right)\|_{L^2}\\
	&~~~~~~+C\|\nabla\Lambda^{\iota}\sigma\|_{L^2}\|\Lambda^{\iota-1}I(a)\|_{L^2}\|B \cdot \nabla  B-\nabla(B \cdot B)\|_{L^\infty}\\
	&\leq C\|\nabla\Lambda^{\iota}\sigma\|_{L^2}\left(\|a\|_{L^\infty}\|\Lambda^{\iota}B\|_{L^2}\|B\|_{L^\infty}+\|a\|_{L^\infty}\|\Lambda^{\iota-1}B\|_{L^2}\|\nabla B\|_{L^\infty}\right)\\
	&~~~~~~+C\|\nabla\Lambda^{\iota}\sigma\|_{L^2}\|\Lambda^{\iota-1}a\|_{L^2}\|\nabla B\|_{L^\infty}\|B\|_{L^\infty}.
\end{align*}
Combining all the estimates above and \eqref{4.6}, we can summary them as
\begin{align*}
&	\left|\int_{\mt^n}\Li\mathbb{D}f_4\cdot\Li \sigma\dd x\right|\\
\leq& C\|\nabla\Lambda^{\iota}\sigma\|_{L^2}\|\Lambda^{\iota}(a,u,B)\|_{L^2}\left(\|(a,u,B)\|_{W^{1,\infty}}+\|(a,B)\|_{W^{1,\infty}}^2\right)\\
	&+C\|\nabla\Lambda^{\iota}\sigma\|_{L^2}\left(\|\nabla\Li u\|_{L^2}\|u\|_{L^\infty}+\|a\|_{L^\infty}\|\nabla\Lambda^{\iota}u\|_{L^2}+\|\Lambda^{\iota-1}a\|_{L^2}\|\nabla^2 u\|_{L^\infty}\right),
\end{align*}
which implies, after combining with \eqref{4.21}, that
\begin{align}\label{F2}
	&\left|	\int_{\mt^n}\Li F_2\cdot\Li \sigma\dd x \right|\nonumber\\
	\leq& C\left(\|\nabla\Lambda^{\iota}u\|_{L^2}+\left\|\Lambda^{\iota}\left(f_1+f_3\cdot\tb\right)\right\|_{L^2}\right)\|\Lambda^{\iota-1}\sigma\|_{L^2}+\left|\int_{\mt^n}\Li\mathbb{D}f_4\cdot\Li \sigma\dd x\right|\nonumber\\
	\leq& C \left(\|\nabla\Lambda^{\iota}u\|_{L^2}+|\tb|\|\nabla\Lambda^{\iota}(a,u,B)\|_{L^2}\|(a,u,B)\|_{L^\infty}\right)\|\nabla\Lambda^{\iota}\sigma\|_{L^2}\nonumber\\
	&+C\|\nabla\Lambda^{\iota}\sigma\|_{L^2}\left(\|\nabla\Li u\|_{L^2}\|u\|_{L^\infty}+\|a\|_{L^\infty}\|\nabla\Lambda^{\iota}u\|_{L^2}+\|\Lambda^{\iota-1}a\|_{L^2}\|\nabla^2 u\|_{L^\infty}\right)\nonumber\\
	&+C\|\nabla\Lambda^{\iota}\sigma\|_{L^2}\|\Lambda^{\iota}(a,u,B)\|_{L^2}\left(\|(a,u,B)\|_{W^{1,\infty}}+\|(a,B)\|_{W^{1,\infty}}^2\right)\nonumber\\
	\leq& \dfrac{\lambda+\mu}{16}\|\nabla\Lambda^{\iota}\sigma\|_{L^2}^2+C_2\|\nabla\Lambda^{\iota}u\|_{L^2}+C\|\nabla\Lambda^{\iota}(a,u,B)\|_{L^2}^2\|(a,u,B)\|_{L^\infty}^2\nonumber\\
	&+C \left(\|\nabla\Li u\|_{L^2}^2\|u\|_{L^\infty}^2+\|a\|_{L^\infty}^2\|\nabla\Lambda^{\iota}u\|_{L^2}^2+\|\Lambda^{\iota-1}a\|_{L^2}^2\|\nabla^2 u\|_{L^\infty}^2\right)\nonumber\\
		&+C\|\Lambda^{\iota}(a,u,B)\|_{L^2}^2\left(\|(a,u,B)\|_{W^{1,\infty}}^2+\|(a,B)\|_{W^{1,\infty}}^4\right).
\end{align}
Inserting \eqref{F2} into \eqref{4.2}, we obtain
\begin{align*}
		&\dfrac{1}{2}\dfrac{\dd}{\dd t}\|\Li \sigma\|_{L^2}^2+(\lambda+\mu)\|\nabla\Lambda^{\iota} \sigma\|_{L^2}^2\\
	=&\dfrac{(|\tb|^2+1)}{\lambda+\mu}\int_{\mt^n}\Li \mathbb{D}u\cdot\Li \sigma\dd x+\int_{\mt^n}\Li F_2\cdot\Li \sigma\dd x\\
	\leq& C\|\nabla\Lambda^{\iota}u\|_{L^2}\|\Lambda^{\iota-1}\sigma\|_{L^2}+	\left|	\int_{\mt^n}\Li F_2\cdot\Li \sigma\dd x \right|\\
	\leq& \dfrac{\lambda+\mu}{8}\|\nabla\Lambda^{\iota}\sigma\|_{L^2}^2+C_3\|\nabla\Lambda^{\iota}u\|_{L^2}^2+C\|\nabla\Lambda^{\iota}(a,u,B)\|_{L^2}^2\|(a,u,B)\|_{L^\infty}^2\\
	&+ C \left(\|\nabla\Li u\|_{L^2}^2\|u\|_{L^\infty}^2+\|a\|_{L^\infty}^2\|\nabla\Lambda^{\iota}u\|_{L^2}^2+\|\Lambda^{\iota-1}a\|_{L^2}^2\|\nabla^2 u\|_{L^\infty}^2\right)\\
	&+C\|\Lambda^{\iota}(a,u,B)\|_{L^2}^2\left(\|(a,u,B)\|_{W^{1,\infty}}^2+\|(a,B)\|_{W^{1,\infty}}^4\right),
\end{align*}
i.e.,
\begin{align}\nonumber
	&\dfrac{1}{2}\dfrac{\dd}{\dd t}\|\Li \sigma\|_{L^2}^2+\dfrac{7}{8}(\lambda+\mu)\|\nabla\Lambda^{\iota} \sigma\|_{L^2}^2\\\nonumber
	\leq& C_3\|\nabla\Lambda^{\iota}u\|_{L^2}^2+C\|\nabla\Lambda^{\iota}(a,u,B)\|_{L^2}^2\|(a,u,B)\|_{L^\infty}^2\\\nonumber
	&+C \left(\|\nabla\Li u\|_{L^2}^2\|u\|_{L^\infty}^2+\|a\|_{L^\infty}^2\|\nabla\Lambda^{\iota}u\|_{L^2}^2+\|\Lambda^{\iota-1}a\|_{L^2}^2\|\nabla^2 u\|_{L^\infty}^2\right)\\\label{hign sigma}
	&+C\|\Lambda^{\iota}(a,u,B)\|_{L^2}^2\left(\|(a,u,B)\|_{W^{1,\infty}}^2+\|(a,B)\|_{W^{1,\infty}}^4\right).
\end{align}

Multiplying \eqref{hign w} with $\dfrac{3\left(\lambda+\mu\right)}{8C_1}$ and adding the resultant with \eqref{hign sigma}, it yields that
\begin{align}\nonumber
	&\dfrac{\dd}{\dd t}\left(\|\Li \sigma\|_{L^2}^2+\dfrac{3\left(\lambda+\mu\right)}{8C_1}\|\Li w\|_{L^2}^2\right)+(\lambda+\mu)\|\nabla\Lambda^{\iota} \sigma\|_{L^2}^2+\dfrac{1}{2(\lambda+\mu)}\dfrac{3\left(\lambda+\mu\right)}{8C_1}\|\Li w\|_{L^2}^2\\\nonumber
	\leq& C_4\|\nabla\Li u\|_{L^2}+C\|\nabla\Lambda^{\iota}(a,u,B)\|_{L^2}^2\|(a,u,B)\|_{L^\infty}^2\\\nonumber
	&+C \left(\|\nabla\Li u\|_{L^2}^2\|u\|_{L^\infty}^2+\|a\|_{L^\infty}^2\|\nabla\Lambda^{\iota}u\|_{L^2}^2+\|\Lambda^{\iota-1}a\|_{L^2}^2\|\nabla^2 u\|_{L^\infty}^2\right)\\\label{4.7}
	&+C\|\Lambda^{\iota}(a,u,B)\|_{L^2}^2\left(\|(a,u,B)\|_{W^{1,\infty}}^2+\|(a,B)\|_{W^{1,\infty}}^4\right),
\end{align}
which can be updated, by setting  $\bar{w}=\sqrt{\dfrac{3\left(\lambda+\mu\right)}{8C_1}}w$, that
\begin{align}\nonumber
	&\dfrac{\dd}{\dd t}\left(\|\Li \sigma\|_{L^2}^2+\|\Li \bar{w}\|_{L^2}^2\right)+(\lambda+\mu)\|\nabla\Lambda^{\iota} \sigma\|_{L^2}^2+\dfrac{1}{2(\lambda+\mu)}\|\Li \bar{w}\|_{L^2}^2\\\nonumber
	\leq& C_4\|\nabla\Li u\|_{L^2}+C\|\nabla\Lambda^{\iota}(a,u,B)\|_{L^2}^2\|(a,u,B)\|_{L^\infty}^2\\\nonumber
&+C \left(\|\nabla\Li u\|_{L^2}^2\|u\|_{L^\infty}^2+\|a\|_{L^\infty}^2\|\nabla\Lambda^{\iota}u\|_{L^2}^2+\|\Lambda^{\iota-1}a\|_{L^2}^2\|\nabla^2 u\|_{L^\infty}^2\right)\\\label{4.8}
&+C\|\Lambda^{\iota}(a,u,B)\|_{L^2}^2\left(\|(a,u,B)\|_{W^{1,\infty}}^2+\|(a,B)\|_{W^{1,\infty}}^4\right).
\end{align}

{\bf When $\iota = 0$}, it suffices to take $\iota=0$ in \eqref{4.8} and then we have
\begin{align}\nonumber
	&\dfrac{\dd}{\dd t}\left(\|  \sigma\|_{L^2}^2+\|  \bar{w}\|_{L^2}^2\right)+(\lambda+\mu)\|\nabla   \sigma\|_{L^2}^2+\dfrac{1}{2(\lambda+\mu)}\|  \bar{w}\|_{L^2}^2\\\nonumber
	\leq& C_4\|\nabla  u\|_{L^2}+C\|\nabla  (a,u,B)\|_{L^2}^2\|(a,u,B)\|_{L^\infty}^2\\\nonumber
	&+C \left(\|\nabla  u\|_{L^2}^2\|u\|_{L^\infty}^2+\|a\|_{L^\infty}^2\|\nabla  u\|_{L^2}^2+\|a\|_{L^2}^2\|\nabla^2 u\|_{L^\infty}^2\right)\\\label{4.9}
	&+C\|  (a,u,B)\|_{L^2}^2\left(\|(a,u,B)\|_{W^{1,\infty}}^2+\|(a,B)\|_{W^{1,\infty}}^4\right).
\end{align}
Summing up \eqref{4.8} and \eqref{4.9}, we can complete the proof.
\end{proof}

\section{Proof of Theorem \ref{thm}}
\subsection{The asymptotic behavior}

\,\,\,\,\,\,\,\,Before constructing the effective decay estimates, as an important preparation, we need to integrate all the a priori estimates established in section 3.

{\bf Integration of a priori estimates}: Initially, multiplying \eqref{4.0} with $\dfrac{c_0\mu}{16C_4}$ and adding the resultant with \eqref{3.00}, we obtain
\begin{align}\nonumber
	&\frac{1}{2}	\frac{\dd}{\dd t}\|     (a,u,B)\|_{H^\iota}^2+\frac{\dd}{\dd t}\left(\dfrac{c_0\mu}{16C_4}\|    (\sigma,\bar{w})\|_{H^\iota}^2\right)+\frac{1}{4}c_0\lambda \|   \san u\|_{H^\iota}^2+\frac{1}{8}c_0\mu \|\nabla    u\|_{H^\iota}^2\\\nonumber
	&+\dfrac{c_0\lambda}{16C_4}\left((\lambda+\mu)\|\nabla    \sigma\|_{H^\iota}^2+\dfrac{1}{2(\lambda+\mu)}\|    \bar{w}\|_{H^\iota}^2\right)\\\nonumber
\leq& C\|\nabla   (a,u,B)\|_{H^\iota}^2\|(a,u,B)\|_{L^\infty}^2 \\\nonumber
	&+C\|    (a,u,B)\|_{H^\iota}^2\left(\| (a,u,B)\|_{W^{1,\infty}}+\|(a,u,B)\|_{W^{1,\infty}}^2+\|(a,B)\|_{W^{1,\infty}}^4\right)\\\label{5.1}
	&+C \left(\|\nabla    u\|_{H^\iota}^2\|u\|_{L^\infty}^2+\|a\|_{L^\infty}^2\|\nabla   u\|_{H^\iota}^2+\|a\|_{H^{\iota-1}}^2\|\nabla^2 u\|_{L^\infty}^2\right),
\end{align}
which can be updated as,
\begin{align}\nonumber
	&	\frac{\dd}{\dd t}\|     (u,B,a,\bs,\bw)\|_{H^\iota}^2+\frac{1}{2}c_0\lambda \|\san u\|_{H^\iota}^2+\frac{1}{4}c_0\mu \|\nabla u\|_{H^\iota}^2+(\lambda+\mu)\|\nabla    \bs\|_{H^\iota}^2\\\nonumber
	&~~+\dfrac{1}{2(\lambda+\mu)}\|    \bw\|_{H^\iota}^2\\\nonumber
\leq& C\|\nabla   (a,u,B)\|_{H^\iota}^2\|(a,u,B)\|_{L^\infty}^2 \\\nonumber
	&+C\|    (a,u,B)\|_{H^\iota}^2\left(\| (a,u,B)\|_{W^{1,\infty}}+\|(a,u,B)\|_{W^{1,\infty}}^2+\|(a,B)\|_{W^{1,\infty}}^4\right)\\\label{5.2}
	&+C \left(\|\nabla    u\|_{H^\iota}^2\|u\|_{L^\infty}^2+\|a\|_{L^\infty}^2\|\nabla   u\|_{H^\iota}^2+\|a\|_{H^{\iota-1}}^2\|\nabla^2 u\|_{L^\infty}^2\right),
\end{align}
by setting
\begin{align*}
	\breve{w}=\sqrt{\dfrac{c_0\lambda}{8C_4}}\bar{w}=\sqrt{\dfrac{3\left(\lambda+\mu\right)c_0\lambda}{64C_1C_4}}w,\quad \breve{\sigma}=\sqrt{\dfrac{c_0\lambda}{64C_4}}\sigma,
\end{align*}

Subsequently, taking $\iota=\dfrac{s}{2}+1$ and multiplying \eqref{5.2} with
\begin{equation}\label{Adef}
A\triangleq \max\left\{\frac{16\mu^2+64|\tb|^2}{c_0\mu},|\tb|+1\right\},
\end{equation}
and adding the resultant with \eqref{3.90}, it yields that
\begin{align}\nonumber
	&	\frac{\dd}{\dd t}\left[A\|   (u,B,a,\bs,\bw)\|_{H^{\frac{s}{2}+1}}^2-\int_{\mathbb{T}^n}\left(\tb\cdot\nabla B\right)\cdot \Lambda^s \mathbb{P}u\dd x-\int_{\mathbb{T}^n}\left(\tb\cdot\nabla B\right)\cdot \mathbb{P}u\dd x\right]\\\nonumber
	&+A\left[\frac{1}{2}c_0\lambda \|  \san u\|_{H^{\frac{s}{2}+1}}^2+\frac{1}{4}c_0\mu \|\nabla  u\|_{H^{\frac{s}{2}+1}}^2+(\lambda+\mu)\|\nabla  \bs\|_{H^{\frac{s}{2}+1}}^2+\dfrac{1}{2(\lambda+\mu)}\|   \bw\|_{H^{\frac{s}{2}+1}}^2\right]\\\nonumber
	\leq&-c_1\left\|\tb\cdot \nabla B\right\|_{H^{\frac{s}{2}}}^2+\left(\dfrac{1}{2}\mu^2+2|\tb|^2\right)\|\nabla u\|_{H^{\frac{s}{2}+1}}^2\\\nonumber
&~+C\|\nabla   (a,u,B)\|_{H^{\frac{s}{2}+1}}^2\|(a,u,B)\|_{L^\infty}^2+C\|a\|_{H^{\frac{s}{2}}}^2\|\nabla^2 u\|_{L^\infty}^2\\\nonumber
	&~+C\|B\|_{H^{\frac{s}{2}+1}}\left(\|a\|_{L^\infty}\|\nabla u\|_{H^{\frac{s}{2}+1}}+\|a\|_{H^\frac{s}{2}}\|\nabla^2 u\|_{L^\infty}+\|\nabla  u\|_{H^{\frac{s}{2}+1}}\|u\|_{L^\infty}\right)\\\nonumber
	&~+C(a,u,B)\|_{H^{\frac{s}{2}+1}}^2\left(\| (a,u,B)\|_{W^{1,\infty}}+\|(a,u,B)\|_{W^{1,\infty}}^2+\|(a,B)\|_{W^{1,\infty}}^4\right).	
\end{align}
which further implies, after noticing $A\geq \dfrac{16\mu^2+64|\tb|^2}{c_0\mu}$ and using the fact
\begin{align}\label{5.5}
	\left(\dfrac{1}{2}\mu^2+2|\tb|^2\right)\|\nabla u\|_{H^{\frac{s}{2}+1}}^2\leq \dfrac{Ac_0\mu}{32}\|\nabla u\|_{H^{\frac{s}{2}+1}}^2,
\end{align}
that,
\begin{align}
	&	\frac{\dd}{\dd t}\left[A\|   (u,B,a,\bs,\bw)\|_{H^{\frac{s}{2}+1}}^2-\int_{\mathbb{T}^n}\left(\tb\cdot\nabla B\right)\cdot \Lambda^s \mathbb{P}u\dd x-\int_{\mathbb{T}^n}\left(\tb\cdot\nabla B\right)\cdot \mathbb{P}u\dd x\right]\nonumber\\
	&+A\left[\frac{1}{2}c_0\lambda \|  \san u\|_{H^{\frac{s}{2}+1}}^2+\frac{7}{32}c_0\mu \|\nabla  u\|_{H^{\frac{s}{2}+1}}^2+(\lambda+\mu)\|\nabla  \bs\|_{H^{\frac{s}{2}+1}}^2+\dfrac{1}{2(\lambda+\mu)}\|   \bw\|_{H^{\frac{s}{2}+1}}^2\right]\nonumber\\
	\leq&-c_1\left\|\tb\cdot \nabla B\right\|_{H^{\frac{s}{2}}}^2+N,\label{5.3}
\end{align}
where $N$ represents the following nonlinear terms
\begin{align}
  N&=C\|\nabla   (a,u,B)\|_{H^{\frac{s}{2}+1}}^2\|(a,u,B)\|_{L^\infty}^2+C\|a\|_{H^{\frac{s}{2}}}^2\|\nabla^2 u\|_{L^\infty}^2\nonumber\\
	&~~~+C\|B\|_{H^{\frac{s}{2}+1}}\left(\|a\|_{L^\infty}\|\nabla u\|_{H^{\frac{s}{2}+1}}+\|a\|_{H^\frac{s}{2}}\|\nabla^2 u\|_{L^\infty}+\|\nabla  u\|_{H^{\frac{s}{2}+1}}\|u\|_{L^\infty}\right)\nonumber\\
	&~~~+C(a,u,B)\|_{H^{\frac{s}{2}+1}}^2\left(\| (a,u,B)\|_{W^{1,\infty}}+\|(a,u,B)\|_{W^{1,\infty}}^2+\|(a,B)\|_{W^{1,\infty}}^4\right).\label{6.0}
\end{align}

At this stage, if one look closely at the structure of \eqref{5.3}, how to deal with the term $c_1\left\|\tb\cdot \nabla B\right\|_{H^{\frac{s}{2}}}^2$ plays a very key role in solving this problem.
{\bf In current work, we observe that the coupling and interaction between the term $c_1\left\|\tb\cdot \nabla B\right\|_{H^{\frac{s}{2}}}^2$ and half of $\dfrac{1}{2(\lambda+\mu)}\|   \bw\|_{H^{\frac{s}{2}+1}}^2$ helps greatly increase the decay rate of solution of system \eqref{equation}.} In what follows, we will show how to improve the decay rate in Lemma \ref{lem4} {\bf(the core part of this paper)} by taking full advantage of this point, together with the Diophantine condition satisfied by background magnetic filed, various interpolation inequalities and characteristics of the periodic domain, which is quite different with the method used by Wu and Zhai in \cite{zhai-3m-compressible-nonresistive}.

\begin{lemma}\label{lem4}
Let $m\geq 2r+\dfrac{n}{2}+3$, and
\begin{align*}
	\int_{\mt^n}a_0\dd x=\int_{\mt^n}B_0\dd x=0.
\end{align*}
For any $T>0$ and a sufficiently small $\delta>0$, assume $(a,u,B)$ satisfies
	\begin{align}\label{5.6}
		\sup_{t\in[0,T]}\left(\|a(t)\|_{  H^m}+\|u(t)\|_{  H^m}+\|B(t)\|_{  H^m}\right)\leq \delta,
	\end{align}
and Poincaré inequality
\begin{align}\label{poincare-uL2}
	\|u(t)\|_{L^2}^2\leq \|\nabla u(t)\|_{L^2}^2.
\end{align}
Then for any $0\leq t\leq T$ and $2r\leq s<n+2+2r$, the following decay estimates hold
\begin{align}\label{decay}
	\|a(t)\|_{H^{\frac{s}{2}+1}}+	\|u(t)\|_{H^{\frac{s}{2}+1}}+	\|B(t)\|_{H^{\frac{s}{2}+1}}\leq C\left(1+t\right)^{-\frac{m-\left(\frac{s}{2}+1\right)}{2(1+r)}}.
\end{align}
\end{lemma}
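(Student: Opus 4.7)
The strategy is to refine the differential inequality \eqref{5.3} into a linear first-order ODE with time-decaying forcing, and then integrate it directly. Three ingredients are required: (i) the Diophantine condition (Lemma~\ref{Diophantinepoincare}) to extract weak-norm dissipation of $B$, (ii) the algebraic identity $w=a+\tilde b\cdot B$ to transfer this to $a$, and (iii) a weighted Gagliardo--Nirenberg interpolation with a free parameter $M$ that ultimately becomes time-dependent.

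First I would apply Lemma~\ref{Diophantinepoincare} at index $s/2-r$ to the term $-c_1\|\tilde b\cdot\nabla B\|_{H^{s/2}}^2$ on the right of \eqref{5.3}; this is legitimate because $\int B\,dx=0$ by \eqref{modezero-aB}, and it produces $-c_1\|\tilde b\cdot\nabla B\|_{H^{s/2}}^2\le -c_1'\|B\|_{H^{s/2-r}}^2$. The triangle inequality in $H^{s/2-r}$ together with $\|w\|_{H^{s/2-r}}\le \|w\|_{H^{s/2+1}}=\|\breve w\|_{H^{s/2+1}}/\sqrt{\kappa}$ then yields $\|a\|_{H^{s/2-r}}^2\le C(\|\breve w\|_{H^{s/2+1}}^2+\|B\|_{H^{s/2-r}}^2)$, so a small fraction of the $\breve w$ and $B$ dissipation in \eqref{5.3} can be traded for an extra dissipative contribution of $\|a\|_{H^{s/2-r}}^2$. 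This upgrades \eqref{5.3} to precisely the form \eqref{apriori1}.

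Second, I would absorb the nonlinear remainder $N$ from \eqref{6.0} into the dissipation. Since $r>n-1$ and $s\ge 2r$, one has $s/2+1>n/2+1$, so the embedding $H^{s/2+1}\hookrightarrow W^{1,\infty}$ controls every $\|(a,u,B)\|_{W^{1,\infty}}$ factor by $\|(a,u,B)\|_{H^{s/2+1}}$, whereas the lone $\|\nabla^2 u\|_{L^\infty}$ in $N$ is dominated by $\|u\|_{H^m}$ via $H^m\hookrightarrow W^{2,\infty}$ (valid as $m\ge 2r+n/2+3>n/2+2$). The bootstrap assumption \eqref{5.6} then bounds each nonlinear piece by $C\delta$ times the dissipation, which is absorbed once $\delta$ is small enough and leaves a clean positive coefficient $\varkappa>0$ in front of the dissipation.

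The pivotal step, and the principal obstacle, is to pass from the low-norm dissipation of $(a,B)$ at level $H^{s/2-r}$ to the $H^{s/2+1}$ norm that appears in the Lyapunov functional $F$. Here I apply Lemma~\ref{chazhi1} with interpolation exponent $\theta=\tfrac{m-s/2-1}{m-s/2+r}\in(0,1)$, followed by Young's inequality with a free parameter $M>0$, to obtain
\begin{align*}
 \|(a,B)\|_{H^{s/2+1}}^2\le CM\bigl(\|a\|_{H^{s/2-r}}^2+\|B\|_{H^{s/2-r}}^2\bigr)+\frac{C}{M^{(m-s/2-1)/(r+1)}}\bigl(\|a\|_{H^m}^2+\|B\|_{H^m}^2\bigr).
\end{align*}
The complementary $(u,\breve\sigma,\breve w)$ parts of $F$ are already bounded by the dissipation itself through the Poincar\'e-type inequalities \eqref{upoincare} and \eqref{poincarew}. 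Feeding this back into the dissipation inequality produces exactly \eqref{apriori2}.

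Finally, choosing $M=\kappa(1+t)$ in \eqref{apriori2} turns it into the linear ODE
\begin{align*}
 F'(t)+\frac{c}{1+t}F(t)\le \frac{C\delta^2}{(1+t)^{1+(m-s/2-1)/(r+1)}}.
\end{align*}
Multiplying by the integrating factor $(1+t)^c$ with $c>(m-s/2-1)/(r+1)$ and integrating from $0$ to $t$, using the bootstrap bound $\|(a,B)(\tau)\|_{H^m}^2\le\delta^2$, I would deduce $F(t)\le C(1+t)^{-(m-s/2-1)/(r+1)}$. Since $F(t)\ge \tfrac{A}{2}\|(u,B,a,\breve\sigma,\breve w)\|_{H^{s/2+1}}^2$, taking the square root gives the claimed decay \eqref{decay}. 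The main bookkeeping subtlety is verifying that the range $2r\le s<n+2+2r$ is exactly what keeps $\theta\in(0,1)$, ensures $H^{s/2-r}$ has non-negative regularity, and makes the Sobolev embeddings uniform, so that a single choice of $M$ governs the full admissible range of $s$.
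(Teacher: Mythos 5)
Your overall architecture is right and two of your three main steps match the paper in substance: (i) using Lemma~\ref{Diophantinepoincare} plus $a=w-\tb\cdot B$ to extract low-regularity dissipation of $(a,B)$ is exactly the paper's ``Retrieving of hidden dissipative terms~I''; and (iii) your Gagliardo--Nirenberg interpolation with exponent $\theta=\tfrac{m-s/2-1}{m-s/2+r}$ followed by Young with free parameter $M$, and then the choice $M\sim(1+t)$ in a Lyapunov argument, is a legitimate alternative to the paper's Plancherel/Fourier-splitting identity \eqref{AM}; both produce the same differential inequality and the same decay exponent $\tfrac{m-s/2-1}{1+r}$.

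The gap is in your step (ii), the absorption of the nonlinear terms $N$. You propose to control every $\|(a,u,B)\|_{W^{1,\infty}}$ factor by $\|(a,u,B)\|_{H^{s/2+1}}$ via Sobolev embedding and then absorb ``$C\delta$ times the dissipation.'' This does not close. With that embedding, the nonlinear remainder has pieces of the form $C\delta\,\|(a,B)\|_{H^{s/2+1}}^2$ (and even $C\delta^2\|(a,B)\|_{H^{s/2+2}}^2$ from the term $\|\nabla(a,u,B)\|_{H^{s/2+1}}^2\|(a,u,B)\|_{L^\infty}^2$), but after step~(i) the only dissipation available for $a$ and $B$ is $\|(a,B)\|_{H^{s/2-r}}^2$, which sits $r+1$ (or $r+2$) derivatives \emph{below} where the nonlinear terms live; there is no way to absorb a high-norm quantity by a low-norm one. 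The paper's proof avoids this by interpolating each \emph{product} of norms down to the right footing: using Lemma~\ref{chazhi1} with endpoints $H^{s/2-r}$ and $H^{2r+n/2+3}$ (and also $H^{r+n/2+2}$), one derives estimates such as
\begin{align*}
\|g\|_{W^{1,\infty}}\,\|g\|_{H^{\frac{s}{2}+1}}^2 &\leq \|g\|_{H^{2r+\frac{n}{2}+3}}\,\|g\|_{H^{\frac{s}{2}-r}}^2,\\
\|g\|_{L^{\infty}}\,\|g\|_{H^{\frac{s}{2}+1}}\,\|g\|_{H^{\frac{s}{2}+2}} &\leq \|g\|_{H^{2r+\frac{n}{2}+3}}\,\|g\|_{H^{\frac{s}{2}-r}}^2,\\
\|g\|_{W^{2,\infty}}\,\|g\|_{H^{\frac{s}{2}}}\,\|g\|_{H^{\frac{s}{2}+1}} &\leq \|g\|_{H^{2r+\frac{n}{2}+3}}\,\|g\|_{H^{\frac{s}{2}-r}}^2,
\end{align*}
so that after the bootstrap bound $\|g\|_{H^{2r+n/2+3}}\leq\|g\|_{H^m}\leq\delta$ every term in $N$ is bounded by $C\delta\,\|(a,u,B)\|_{H^{s/2-r}}^2$, which is precisely the quantity the $H^{s/2-r}$ dissipation can absorb. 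The choice of the intermediate index $2r+\tfrac n2+3$ (and the constraint $m\geq 2r+\tfrac n2+3$) is not cosmetic; it is exactly what makes the two low-norm exponents in each product sum to $2$ at $H^{s/2-r}$. Without this recalibration the inequality you write as \eqref{apriori1} is not obtained, and your subsequent steps, while formally consistent, start from an estimate you have not actually established.

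One smaller imprecision: you assert the range $2r\leq s<n+2+2r$ is ``exactly what keeps $\theta\in(0,1)$.'' In fact $\theta=\tfrac{m-s/2-1}{m-s/2+r}\in(0,1)$ holds on a larger range; the real role of the upper bound $s<n+2+2r$ is to ensure $\tfrac n2+1>\tfrac s2-r$ so that the $L^\infty$ and $W^{k,\infty}$ endpoints in the interpolations above are admissible, which you would have discovered had you carried out step~(ii) in the paper's manner.
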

\begin{proof}
{\bf Retrieving of hidden dissipative terms I:} If analyzing the dissipative structure of \eqref{5.3}, we may discover that the second bracket in the left side includes almost all the dissipation terms of variables apart from $B$ and $a$. In the following part, we will work to recover them. To this end, we first invoke the Poincaré-type inequality involving the Diophantine condition Lemma \ref{Diophantinepoincare} to derive
\begin{align}\label{dissipationofB}
	-c_1 \left\|\tb\cdot \nabla B\right\|_{H^{\frac{s}{2}}}^2\leq -c_d\|B\|_{H^{\frac{s}{2}-r}}^2,
\end{align}
where the constant $c_d>0$ depends only on $d$. Next work is recovering the dissipative term of $a$. To solve it, we rewrite its definition as
\begin{align*}
	a=w-\tb\cdot B=\sqrt{\dfrac{64C_1C_4}{3\left(\lambda+\mu\right)c_0\lambda}}\breve{w}-\tb\cdot B,
\end{align*}
which also means
\begin{align}\nonumber
	\| a\|_{H^{\frac{s}{2}-r}}^2&\leq 	\| w\|_{H^{\frac{s}{2}-r}}^2+	|\tb|^2\| B\|_{H^{\frac{s}{2}-r}}^2\\\nonumber
	&\leq \dfrac{64C_1C_4}{3\left(\lambda+\mu\right)c_0\lambda}\| \breve{w}\|_{H^{\frac{s}{2}-r}}^2+	|\tb|^2\| B\|_{H^{\frac{s}{2}-r}}^2\\\label{dissipationofa1}
	&\leq \dfrac{64C_1C_4}{3\left(\lambda+\mu\right)c_0\lambda}\|\breve{w}\|_{H^{\frac{s}{2}+1}}^2+	|\tb|^2\| B\|_{H^{\frac{s}{2}-r}}^2.
\end{align}
From \eqref{dissipationofa1}, we can see that to restore the dissipation of $a$, it suffices to distribute part of the dissipation of $B$ and $\breve{w}$ and then integrate them. Following this idea,
we can recover it as
\begin{align}\nonumber
	&\dfrac{1}{2}\left(-c_d\| B\|_{H^{\frac{s}{2}-r}}^2-\dfrac{A}{2(\lambda+\mu)}\| \bw\|_{H^{\frac{s}{2}+1}}^2\right)\\\nonumber
	\leq& \min\left\{\dfrac{3(\lambda+\mu)c_0\lambda}{64C_1C_4}\times\left(-\dfrac{A}{4(\lambda+\mu)}\right),\dfrac{1}{|\tb|^2}\times \left(-\dfrac{c_d}{2}\right)\right\}\| a\|_{H^{\frac{s}{2}-r}}^2\\\nonumber
	=&\min\left\{-\dfrac{3Ac_0\lambda}{256C_1C_4},-\dfrac{c_d}{2|\tb|^2}\right\}\| a\|_{H^{\frac{s}{2}-r}}^2\\
   \triangleq& -c_a \| a\|_{H^{\frac{s}{2}-r}}^2.\label{5.4}
\end{align}

Now, we just need to collect \eqref{dissipationofB} and \eqref{5.4}, and then combine the resultant with \eqref{5.3}. For simplicity, we unify all the constants that appear in \eqref{5.3}, \eqref{dissipationofB} and \eqref{5.4} through introducing a new number
\begin{align}\label{varkappa}
	\varkappa=\min\left\{\dfrac{c_d}{2},c_a,\dfrac{7Ac_0\mu}{32},A(\lambda+\mu),\dfrac{A}{4(\lambda+\mu)}\right\}.
\end{align}
Thanks to \eqref{varkappa} and employing \eqref{5.4} and \eqref{5.5}, we can ultimately update \eqref{5.3} as
\begin{align}\nonumber
	&	\frac{\dd}{\dd t}\left[A\|   (u,B,a,\bs,\bw)\|_{H^{\frac{s}{2}+1}}^2-\int_{\mathbb{T}^n}\left(\tb\cdot\nabla B\right)\cdot \Lambda^s \mathbb{P}u\dd x-\int_{\mathbb{T}^n}\left(\tb\cdot\nabla B\right)\cdot \mathbb{P}u\dd x\right]\\\label{5.51}
	&~+\varkappa\left[\underbrace{\|B\|_{H^{\frac{s}{2}-r}}^2+\|a\|_{H^{\frac{s}{2}-r}}^2}_{III}+\|\nabla u\|_{H^{\frac{s}{2}+1}}^2+\|\nabla  \bs\|_{H^{\frac{s}{2}+1}}^2+\|   \bw\|_{H^{\frac{s}{2}+1}}^2\right]\leq N.
\end{align}
In this part, we only recover the dissipative effect of $B$ and $a$ in $H^{\frac{s}{2}-r}$, which is not enough to help us establishing desired decay rate. However, term $III$ together with the condition
\eqref{5.6} are able to control the nonlinear terms $N$. Next, we will first estimate the nonlinear terms and then come back to restore the dissipation of $B$ and $a$ sequentially.

{\bf Control of nonlinear terms:}
According to Lemma \ref{chazhi1}, for $\frac{n}{2}+1>\frac{s}{2}-r \Rightarrow s<n+2(1+r)$ and $g=a,B\,{\rm or}\,u$, there holds
\begin{align}\label{em1}
\|g\|_{L^{\infty}}\leq \|g\|_{H^{2r+\frac{n}{2}+3}}^{\frac{2r+n-s}{6r+6+n-s}}\|g\|_{H^{\frac{s}{2}-r}}^{\frac{4r+6}{6r+6+n-s}},\quad \|g\|_{H^{\frac{s}{2}+2}}\leq \|g\|_{H^{2r+\frac{n}{2}+3}}^{\frac{2r+4}{6r+6+n-s}}\|g\|_{H^{\frac{s}{2}-r}}^{\frac{4r+2+n-s}{6r+6+n-s}},
\end{align}
\begin{align}\label{em2}
\|g\|_{W^{1,\infty}}\leq \|g\|_{H^{2r+\frac{n}{2}+3}}^{\frac{2r+2+n-s}{6r+6+n-s}}\|g\|_{H^{\frac{s}{2}-r}}^{\frac{4r+4}{6r+6+n-s}},\quad	\|g\|_{H^{\frac{s}{2}+1}}\leq \|g\|_{H^{2r+\frac{n}{2}+3}}^{\frac{2r+2}{6r+6+n-s}}\|g\|_{H^{\frac{s}{2}-r}}^{\frac{4r+4+n-s}{6r+6+n-s}},
\end{align}
\begin{align}\label{em3}
\|g\|_{W^{2,\infty}}\leq \|g\|_{H^{2r+\frac{n}{2}+3}}^{\frac{2r+4+n-s}{6r+6+n-s}}\|g\|_{H^{\frac{s}{2}-r}}^{\frac{4r+2}{6r+6+n-s}},\quad	\|g\|_{H^{\frac{s}{2}}}\leq \|g\|_{H^{2r+\frac{n}{2}+3}}^{\frac{2r}{6r+6+n-s}}\|g\|_{H^{\frac{s}{2}-r}}^{\frac{4r+6+n-s}{6r+6+n-s}},
\end{align}
and therefore
\begin{align}\label{multi1}
\|g\|_{L^{\infty}}\|g\|_{H^{\frac{s}{2}+1}}\|g\|_{H^{\frac{s}{2}+2}}\leq\|g\|_{H^{2r+\frac{n}{2}+3}}\|g\|_{H^{\frac{s}{2}-r}}^2,
\end{align}
\begin{align}\label{multi2}
\|g\|_{W^{1,\infty}}	\|g\|_{H^{\frac{s}{2}+1}}^2\leq\|g\|_{H^{2r+\frac{n}{2}+3}}\|g\|_{H^{\frac{s}{2}-r}}^2,
\end{align}
\begin{align}\label{multi3}
\|g\|_{W^{2,\infty}}\|g\|_{H^{\frac{s}{2}}}	\|g\|_{H^{\frac{s}{2}+1}}\leq\|g\|_{H^{2r+\frac{n}{2}+3}}\|g\|_{H^{\frac{s}{2}-r}}^2.
\end{align}
Considering $m\geq 2r+\dfrac{n}{2}+3$, $r\geq n-1$, Sobolev embedding inequality and \eqref{5.6}, we have
\begin{align}\label{embedding}
		\|a,B,u\|_{  W^{1,\infty}}\leq C\|a,B,u\|_{  H^m}\leq C\delta,
	\end{align}
which implies, after using \eqref{multi1}--\eqref{multi3}, that
\begin{align}\label{right1}
&C\|B\|_{H^{\frac{s}{2}+1}}\left(\|a\|_{L^\infty}\|\nabla u\|_{H^{\frac{s}{2}+1}}+\|a\|_{H^\frac{s}{2}}\|\nabla^2 u\|_{L^\infty}+\|\nabla  u\|_{H^{\frac{s}{2}+1}}\|u\|_{L^\infty}\right)\nonumber\\
	&+C\left(\| (a,u,B)\|_{W^{1,\infty}}+\|(a,u,B)\|_{W^{1,\infty}}^2+\|(a,B)\|_{W^{1,\infty}}^4\right)\|  (a,u,B)\|_{H^{\frac{s}{2}+1}}^2\nonumber\\	
\leq& \left(C+C\delta+C{\delta}^3\right) \|(a,u,B)\|_{H^{2r+\frac{n}{2}+3}}\|(a,u,B)\|_{H^{\frac{s}{2}-r}}^2\nonumber\\
\leq& C\|(a,u,B)\|_{H^{2r+\frac{n}{2}+3}}\|(a,u,B)\|_{H^{\frac{s}{2}-r}}^2.
\end{align}
Similar as before, by utilizing Lemma \ref{chazhi1} again, for $\frac{n}{2}+1>\frac{s}{2}-r \Rightarrow s<n+2(1+r)$ and $h=a,B\,{\rm or}\,u$, we have
\begin{align}\label{em4}
\|h\|_{L^{\infty}}\leq \|h\|_{H^{r+\frac{n}{2}+2}}^{\frac{2r+n-s}{4r+4+n-s}}\|h\|_{H^{\frac{s}{2}-r}}^{\frac{2r+4}{4r+4+n-s}},\quad \|h\|_{H^{\frac{s}{2}+2}}\leq \|h\|_{H^{r+\frac{n}{2}+2}}^{\frac{2r+4}{4r+4+n-s}}\|h\|_{H^{\frac{s}{2}-r}}^{\frac{2r+n-s}{4r+4+n-s}},
\end{align}
\begin{align}\label{em5}
\|h\|_{W^{2,\infty}}\leq \|h\|_{H^{r+\frac{n}{2}+2}}^{\frac{2r+4+n-s}{4r+4+n-s}}\|h\|_{H^{\frac{s}{2}-r}}^{\frac{2r}{4r+4+n-s}},\quad	\|h\|_{H^{\frac{s}{2}}}\leq \|h\|_{H^{r+\frac{n}{2}+2}}^{\frac{2r}{4r+4+n-s}}\|h\|_{H^{\frac{s}{2}-r}}^{\frac{2r+4+n-s}{4r+4+n-s}},
\end{align}
which yields, after noticing $m\geq 2r+\dfrac{n}{2}+3$ and using \eqref{5.6}, that
\begin{align}\label{right2}
	&C\|a\|_{H^{\frac{s}{2}}}^2\|\nabla^2 u\|_{L^\infty}^2+C\|\nabla   (a,u,B)\|_{H^{\frac{s}{2}+1}}^2\|(a,u,B)\|_{L^\infty}^2\nonumber\\
\leq& C\|(a,u,B)\|_{H^{\frac{n}{2}+2+r}}^2 \|(a,u,B)\|_{H^{\frac{s}{2}-r}}^2\nonumber\\
	\leq& C \|(a,u,B)\|_{H^{2r+\frac{n}{2}+3}}^2\|(a,u,B)\|_{H^{\frac{s}{2}-r}}^2\nonumber\\
\leq& C\delta\|(a,u,B)\|_{H^{2r+\frac{n}{2}+3}}\|(a,u,B)\|_{H^{\frac{s}{2}-r}}^2.
\end{align}
Subsequently, by taking $\delta>0$ small enough such that
\begin{align*}
	C_5\|(a,u,B)\|_{H^{2r+\frac{n}{2}+3}}\leq C_5\delta\leq \dfrac{\varkappa}{2},
\end{align*}
we can summarize \eqref{right1} and \eqref{right2} as
\begin{align}\label{right3}
	N\leq& \left(C+C\delta\right)\|(a,u,B)\|_{H^{2r+\frac{n}{2}+3}}\|(a,u,B)\|_{H^{\frac{s}{2}-r}}^2\nonumber\\
\triangleq&C_5\|(a,u,B)\|_{H^{2r+\frac{n}{2}+3}}\|(a,u,B)\|_{H^{\frac{s}{2}-r}}^2\nonumber\\
\leq& C_5\delta \|(a,u,B)\|_{H^{\frac{s}{2}-r}}^2\leq \dfrac{\varkappa}{2}\|(a,u,B)\|_{H^{\frac{s}{2}-r}}^2.
\end{align}

With \eqref{right3} at hand, we can rewrite \eqref{5.51} as
\begin{align}\label{estabofdecay1}
	&	\frac{\dd}{\dd t}\left[A\|   (u,B,a,\bs,\bw)\|_{H^{\frac{s}{2}+1}}^2-\int_{\mathbb{T}^n}\left(\tb\cdot\nabla B\right)\cdot \Lambda^s \mathbb{P}u\dd x-\int_{\mathbb{T}^n}\left(\tb\cdot\nabla B\right)\cdot \mathbb{P}u\dd x\right]\nonumber\\
	\leq& -\dfrac{\varkappa}{2}\left[\|B\|_{H^{\frac{s}{2}-r}}^2+\|a\|_{H^{\frac{s}{2}-r}}^2+\|\nabla u\|_{H^{\frac{s}{2}+1}}^2+\|\nabla  \bs\|_{H^{\frac{s}{2}+1}}^2+\|   \bw\|_{H^{\frac{s}{2}+1}}^2\right].
\end{align}
\eqref{poincare-uL2} and \eqref{zero} lead to
\begin{align}\label{estabofdecay2}
	\|\nabla u\|_{H^{\frac{s}{2}+1}}^2+\|\nabla  \bs\|_{H^{\frac{s}{2}+1}}^2\geq \dfrac{1}{C_6}	\left(\|u\|_{H^{\frac{s}{2}+1}}^2+\|  \bs\|_{H^{\frac{s}{2}+1}}^2\right).
\end{align}
Set $\varsigma=\min\{\dfrac{\varkappa}{2},\dfrac{\varkappa}{2C_6}\}$ and apply \eqref{estabofdecay2}, \eqref{estabofdecay1} can be updated as
\begin{align}\nonumber
	&	\frac{\dd}{\dd t}\left[A\|   (u,B,a,\bs,\bw)\|_{H^{\frac{s}{2}+1}}^2-\int_{\mathbb{T}^n}\left(\tb\cdot\nabla B\right)\cdot \Lambda^s \mathbb{P}u\dd x-\int_{\mathbb{T}^n}\left(\tb\cdot\nabla B\right)\cdot \mathbb{P}u\dd x\right]\\\label{5.61}
	\leq& -\varsigma\left[\underbrace{\|B\|_{H^{\frac{s}{2}-r}}^2+\|a\|_{H^{\frac{s}{2}-r}}^2}_{III}+\|u\|_{H^{\frac{s}{2}+1}}^2+\| \bs\|_{H^{\frac{s}{2}+1}}^2+\|   \bw\|_{H^{\frac{s}{2}+1}}^2\right].
\end{align}

{\bf Retrieving of hidden dissipative terms II:}
As shown in \eqref{5.61}, it suffices to unearth more dissipative effect from the term $I$. For this purpose,
we introduce a large number $M$ to be chosen later such that $\frac{A}{M}\leq 1$ and employ \eqref{modezero-aB} and Plancherel's theorem to conclude
\begin{align}\notag
	&\frac{A}{M}\|\Lambda^{\frac{s}{2}+1} B\|_{L^2}^2-\|\Lambda^{\frac{s}{2}-r} B\|_{L^2}^2=\sum_{|k|\neq 0}\left(\frac{A}{M}|k|^{s+2}-|k|^{s-2r}\right)\left|\w{B}(k)\right|^2\label{AM}\\
	\leq& \frac{A}{M}\sum_{\frac{A}{M}\geq |k|^{-2r-2}}|k|^{s+2}\left|\w{B}(k)\right|^2\notag\\
	\leq& \left(\frac{A}{M}\right)^{1+\frac{m-\frac{s}{2}-1}{1+r}}\sum_{\frac{A}{M}\geq |k|^{-2r-2}}\left(\dfrac{M}{A}\right)^{\frac{m-\frac{s}{2}-1}{1+r}}\left(\frac{1}{|k|}\right)^{2m-s-2}|k|^{2m}\left|\w{B}(k)\right|^2\notag\\
	\leq& \left(\frac{A}{M}\right)^{1+\frac{m-\frac{s}{2}-1}{1+r}}\|\Lambda^{m}B\|_{L^2},
\end{align}
where we used $\frac{A}{M}\leq 1$, $|k|\geq 1$ and $m\geq \frac{s}{2}+1$ in the last step. To estimate the terms on the right side of \eqref{5.61}, we first make use of Poincaré-type inequalities of $a$ \eqref{apoincare} and $B$ \eqref{bpoincare} to derive
\begin{align}\label{Hspoincare-aB}
	\| a\|_{H^{\frac{s}{2}+1}}^2 \leq C_6	\|\Lambda^{\frac{s}{2}+1} a\|_{L^2}^2,~~\| B\|_{H^{\frac{s}{2}+1}}^2 \leq C_6	\|\Lambda^{\frac{s}{2}+1} B\|_{L^2}^2,
\end{align}
where $C_6\geq 1$ is an absolute constant , and then utilize \eqref{Hspoincare-aB} to obtain
\begin{align}\nonumber
	&-\varsigma\|B\|_{H^{\frac{s}{2}-r}}^2\leq 	-\varsigma\|\Lambda^{\frac{s}{2}-r}B\|_{L^2}^2\\\nonumber
=&	\varsigma\left(\frac{A}{M}\|\Lambda^{\frac{s}{2}+1} B\|_{L^2}^2-\|\Lambda^{\frac{s}{2}-r} B\|_{L^2}^2\right)-\dfrac{A\varsigma}{2M}\|\Lambda^{\frac{s}{2}+1} B\|_{L^2}^2\\\nonumber
	\leq& \varsigma\left(\frac{A}{M}\right)^{1+\frac{m-\frac{s}{2}-1}{1+r}}\|\Lambda^{m}B\|_{L^2}^2-\dfrac{\varsigma A}{M}\|\Lambda^{\frac{s}{2}+1} B\|_{L^2}^2\\\label{5.9}
	\leq& \varsigma\left(\frac{A}{M}\right)^{1+\frac{m-\frac{s}{2}-1}{1+r}}\|B\|_{H^m}^2-\dfrac{\varsigma A}{MC_6}\| B\|_{H^{\frac{s}{2}+1}}^2,
\end{align}
and
\begin{align}\nonumber
	&-\varsigma\|a\|_{H^{\frac{s}{2}-r}}^2\leq -\varsigma \|\Lambda^{\frac{s}{2}-r}a\|_{L^2}^2\\\nonumber
=&	{\varsigma}\left(\frac{A}{M}\|\Lambda^{\frac{s}{2}+1} a\|_{L^2}^2-\|\Lambda^{\frac{s}{2}-r} a\|_{L^2}^2\right)-\dfrac{A\varsigma}{M}\|\Lambda^{\frac{s}{2}+1} a\|_{L^2}^2\\\nonumber
	\leq& \varsigma\left(\frac{A}{M}\right)^{1+\frac{m-\frac{s}{2}-1}{1+r}}\|\Lambda^{m}a\|_{L^2}^2-\dfrac{\varsigma A}{M}\|\Lambda^{\frac{s}{2}+1} a\|_{L^2}^2\\\label{5.10}
	\leq& \varsigma\left(\frac{A}{M}\right)^{1+\frac{m-\frac{s}{2}-1}{1+r}}\|a\|_{H^m}^2-\dfrac{\varsigma A}{MC_6}\| a\|_{H^{\frac{s}{2}+1}}^2.
\end{align}
Adding \eqref{5.9} and \eqref{5.10} together, it holds that
\begin{align}
	&-\varsigma\left(\|B\|_{H^{\frac{s}{2}-r}}^2+\|a\|_{H^{\frac{s}{2}-r}}^2\right)\nonumber\\
\leq& -\dfrac{\varsigma A}{MC_6}\left(\| B\|_{H^{\frac{s}{2}+1}}^2+\| a\|_{H^{\frac{s}{2}+1}}^2\right)+
	\varsigma\left(\frac{A}{M}\right)^{1+\frac{m-\frac{s}{2}-1}{1+r}}\left(\|B\|_{H^m}^2+\|a\|_{H^m}^2\right),\label{6.01}
\end{align}
from which, we obtain the desired dissipative terms and \eqref{5.61} can be updated as
\begin{align}\nonumber
	&	\frac{\dd}{\dd t}\left[\underbrace{A\|   (u,B,a,\bs,\bw)\|_{H^{\frac{s}{2}+1}}^2-\int_{\mathbb{T}^n}\left(\tb\cdot\nabla B\right)\cdot \Lambda^s \mathbb{P}u\dd x-\int_{\mathbb{T}^n}\left(\tb\cdot\nabla B\right)\cdot \mathbb{P}u\dd x}_{I}\right]\\\label{6.02}
	\leq& -\dfrac{\varsigma}{MC_6}\left(\underbrace{A\|(u,B,a,\breve{\sigma},\breve{w})\|_{H^{\frac{s}{2}+1}}^2}_{IV}\right)+\varsigma\left(\frac{A}{M}\right)^{1+\frac{m-\frac{s}{2}-1}{1+r}}\left(\|a\|_{H^m}^2+\|B\|_{H^m}^2\right),
\end{align}
by using the fact $\dfrac{A}{MC_6}\leq 1$. We can see from \eqref{6.02} that the term $I$ and $IV$ don't match yet. In fact, through invoking Hölder inequality and Poincaré-type inequality of $B$ \eqref{bpoincare}, one can get
\begin{align}\label{5.7}
	&\left|\int_{\mathbb{T}^n}\left(\tb\cdot\nabla B\right)\cdot \Lambda^s \mathbb{P}u\dd x\right|+\left|	\int_{\mathbb{T}^n}\left(\tb\cdot\nabla B\right)\cdot \mathbb{P}u\dd x\right|\nonumber\\
\leq&|\tilde{b}| \|\Lambda^{\frac{s}{2}}B\|_{L^2}\|\Lambda^{\frac{s}{2}+1}u\|_{L^2}+ |\tilde{b}| \|\nabla B\|_{L^2}\|u\|_{L^2}\nonumber\\
\leq& \frac{|\tilde{b}|}{2}\|\Lambda^{\frac{s}{2}+1}u\|_{L^2}^2+\frac{|\tilde{b}|}{2}\|\Lambda^{\frac{s}{2}+1}B\|_{L^2}^2+\frac{|\tilde{b}|}{2}\|u\|_{L^2}^2+\frac{|\tilde{b}|}{2}\|\nabla B\|_{L^2}^2\nonumber\\
\leq& \frac{A}{2}\|\left(u,B\right)\|_{H^{\frac{s}{2}+1}}^2,
\end{align}
which means that the term $I$ and $IV$ are equivalent. Thanks to \eqref{6.02} and \eqref{5.7}, we finally obtain the following dissipative inequality
\begin{align}\nonumber
	&	\frac{\dd}{\dd t}\left[A\|   (u,B,a,\bs,\bw)\|_{H^{\frac{s}{2}+1}}^2-\int_{\mathbb{T}^n}\left(\tb\cdot\nabla B\right)\cdot \Lambda^s \mathbb{P}u\dd x-\int_{\mathbb{T}^n}\left(\tb\cdot\nabla B\right)\cdot \mathbb{P}u\dd x\right]\\\nonumber
	\leq& -\dfrac{\varsigma}{2MC_6}\left[A\|   (u,B,a,\bs,\bw)\|_{H^{\frac{s}{2}+1}}^2-\int_{\mathbb{T}^n}\left(\tb\cdot\nabla B\right)\cdot \Lambda^s \mathbb{P}u\dd x-\int_{\mathbb{T}^n}\left(\tb\cdot\nabla B\right)\cdot \mathbb{P}u\dd x\right]\\\label{5.11}
	&+\underbrace{\frac{C_7}{{M}^{1+\frac{m-\frac{s}{2}-1}{1+r}}}\left(\|a\|_{H^m}^2+\|B\|_{H^m}^2\right)}_{V},
\end{align}
where $C_7$ is an absolute constant depends only on ${|\tilde{b}|}$, $\mu$, $c_0$, $m$, $r$ and $\varsigma$.

{\bf Suitable Lyapunov functional}: Come to \eqref{5.11}, we have got enough dissipative effect but the price to pay is the appearance of forcing term $V$. Fortunately, it can be well controlled by constructing the suitable Lyapunov functional
\begin{align*}
	F(t)\triangleq A\|   (u,B,a,\bs,\bw)\|_{H^{\frac{s}{2}+1}}^2-\int_{\mathbb{T}^n}\left(\tb\cdot\nabla B\right)\cdot \Lambda^s \mathbb{P}u\dd x-\int_{\mathbb{T}^n}\left(\tb\cdot\nabla B\right)\cdot \mathbb{P}u\dd x,
\end{align*}
which satisfies
\begin{align*}
	F(t)\geq\frac{A}{2}\|   (u,B,a,\bs,\bw)\|_{H^{\frac{s}{2}+1}}^2,
\end{align*}
according to \eqref{5.7}. To establish corresponding decay estimate, we first take $M=A+\dfrac{\varsigma t}{2jC_6}$ and multiply $(A+\dfrac{\varsigma t}{2jC_6})^j$ on both sides of \eqref{5.11} to obtain
\begin{align}
	\frac{\dd}{\dd t}\left((A+\dfrac{\varsigma t}{2jC_6})^jF(t)\right)\leq C_7(A+\dfrac{\varsigma t}{2jC_6})^{j-1-\frac{m-\frac{s}{2}-1}{1+r}}\|(a,B)(t)\|_{H^m}^2,\label{4.5}
\end{align}
where $j>\dfrac{m-r-1}{1+r}\geq \dfrac{m-\frac{s}{2}-1}{1+r}$. Subsequently, integrating \eqref{4.5} from $0$ to $t$, it follows that
\begin{align}\label{finaldecay}
	(A+\dfrac{\varsigma t}{2jC_6})^jF(t)\leq A^jF(0)+C\int_{0}^t(A+\dfrac{\varsigma \tau}{2jC_6})^{j-1-\frac{m-\frac{s}{2}-1}{1+r}}\|(a,B)(t)\|_{H^m}^2\dd \tau,
\end{align}

Finally, multiplying $(1+t)^{\frac{m-\frac{s}{2}-1}{1+r}-j}$ on both sides of \eqref{finaldecay} and using \eqref{5.6}, we have
\begin{align*}
	&(1+t)^{\frac{m-\frac{s}{2}-1}{1+r}}F(t)\\
	\leq& C (1+t)^{\frac{m-\frac{s}{2}-1}{1+r}-j}A^j F(0)+C(1+t)^{\frac{m-\frac{s}{2}-1}{1+r}-j}\int_{0}^t(A+\dfrac{\varsigma \tau}{2jC_6})^{j-1-\frac{m-\frac{s}{2}-1}{1+r}}\|(a,B)(\tau)\|_{H^m}^2\dd \tau\\
	\leq& C+C(1+t)^{\frac{m-\frac{s}{2}-1}{1+r}-j+j-\frac{m-\frac{s}{2}-1}{1+r}}\sup_{0\leq\tau\leq t}\|(a,B)(\tau)\|_{H^m}^2\leq C,
\end{align*}
and therefore
\begin{align}
	\|   (u,B,a,\bs,\bw)\|_{H^{\frac{s}{2}+1}}^2\leq C(1+t)^{-\frac{m-\frac{s}{2}-1}{1+r}},\label{Hsdecay}
\end{align}
which completes all the proof.
\end{proof}

\subsection{Proof of Theorem \ref{thm}}

\begin{proof}[Proof of Theorem \ref{thm}]
As mentioned in  Subsection 3.1, in what follows, it suffices to establish the global a priori estimates of solutions in $H^m$ and then use the bootstrapping argument to prove our main result. For this purpose, firstly, by utilizing Lemma \ref{lem4} and setting $s$ to be equal to $2r$, we can obtain the decay estimates
	\begin{align}\label{lower}
		\|   (a,u,B)\|_{H^{r+1}}\leq C(1+t)^{-\frac{m-r-1}{2+2r}},
	\end{align}
	which also infers, after using the imbedding inequalities and the condition $r+1>n\geq \frac{n}{2}+1$, that
	\begin{align}
		\|a\|_{W^{1,\infty}}+	\|u\|_{W^{1,\infty}}+\| B\|_{W^{1,\infty}}\leq C	\|(a,u,B)(t)\|_{H^{r+1}}\leq C(1+t)^{-\frac{m-r-1}{2+2r}}.\label{linfinity}
	\end{align}
	Combing \eqref{linfinity} and Lemma \ref{hign energy} with $\iota=m$, it follows that
	\begin{align}\nonumber
		&\frac{1}{2}	\frac{\dd}{\dd t}\|    (a,u,B)\|_{  H^m}^2+\frac{1}{4}c_0\lambda \|  \san u\|_{  H^m}^2+\frac{3}{16}c_0\mu \|\nabla   u\|_{  H^m}^2 \\\label{6.1}
		\leq& C\|   (a,u,B)\|_{  H^m}^2\left(\| (a,u,B)\|_{W^{1,\infty}}+\|(a,u,B)\|_{W^{1,\infty}}^2+\|(a,B)\|_{W^{1,\infty}}^4\right).
	\end{align}
	
	Invoking the assumption $m>3r+3\geq 2r+\frac{n}{2}+3$, it is clear that
    \begin{equation}\label{rangeofm}
     \frac{m-r-1}{2+2r}>1,
    \end{equation}
which together with \eqref{linfinity} imply
	\begin{align}\label{useful bound}
	\int_0^\infty\left(\| (a,u,B)(\tau)\|_{W^{1,\infty}}+\|(a,u,B)(\tau)\|_{W^{1,\infty}}^2+\|(a,B)(\tau)\|_{W^{1,\infty}}^4\right)\dd \tau\leq C.
	\end{align}
and further
\begin{align}
		\|(a,u,B)(t)\|_{H^m}^2\leq  C\varepsilon^2,\quad t\in[0,T].\label{Hm}
	\end{align}
by using Gronwall's inequality for \eqref{6.1}. Thanks to \eqref{Hm}, one can choose $\varepsilon$ be sufficiently small such that $\sqrt{C}\varepsilon < \frac{1}{2}\delta$. Then, according to the bootstrapping argument, we can conclude that the unique local solution can be extended to a global one, and satisfies the following estimate
	\begin{align}\label{hign}
			\|(a,u,B)(t)\|_{H^m}^2\leq  C\varepsilon,\quad t\in[0,\infty).
	\end{align}

	In the last part, we establish the decay estimates of solution in $H^\alpha$ for any $r+1\leq \alpha\leq m$. As a matter of fact, it suffices to apply interpolation inequality,
    and then we can obtain from \eqref{lower} and \eqref{hign}, that
	\begin{align}\label{lastdecay}
		&\|a(t)\|_{H^\alpha}+\|u(t)\|_{H^\alpha}+\|B(t)\|_{H^\alpha}\nonumber\\
\leq& C\|a(t)\|_{H^{r+1}}^{\frac{m-\alpha}{m-r-1}}\|a(t)\|_{H^m}^{\frac{\alpha-r-1}{m-r-1}}+C\|u(t)\|_{H^{r+1}}^{\frac{m-\alpha}{m-r-1}}\|u(t)\|_{H^m}^{\frac{\alpha-r-1}{m-r-1}}
		+C\|B(t)\|_{H^{r+1}}^{\frac{m-\alpha}{m-r-1}}\|B(t)\|_{H^m}^{\frac{\alpha-r-1}{m-r-1}}\nonumber\\
		\leq& C(1+t)^{-\frac{m-\alpha}{2+2r}},
	\end{align}
	which finish the proof of Theorem \ref{thm}.
\end{proof}


\section*{Ackonwledgments}
\,\,\,\,\,\,\,\,Quansen Jiu was partially supported by National Natural Science Foundation of China under grants (No.\,\,11931010, No.\,\,12061003). Jitao Liu was partially supported by National Natural Science Foundation of China under grants (No.\,\,11801018, No.\,\,12061003), Beijing Natural Science Foundation under grant (No.\,\,1192001) and Beijing University of Technology under grant (No.\,\,006000514123513).


\begin{thebibliography}{00}






	\bibitem{abidi-2017-GlobalSolution3D}
H. Abidi, P. Zhang, {\em On the {{global solution}} of a 3-{{D MHD
			system}} with {{initial data}} near {{equilibrium}}}, Comm. Pure Appl. Math. 70 (2017), no. 8, 1509--1561.

\bibitem{Alinhac}
S. Alinhac, P. Gérard, {\em Pseudo-differential operators and the Nash-Moser theorem}, Graduate Studies in Mathematics, 82. American Mathematical Society, Providence, RI, 2007.

\bibitem{bahouri fourier}
H. Bahouri, J.-Y. Chemin, and R. Danchin, {\em Fourier analysis and nonlinear partial differential equations}, Grundlehren der mathematischen Wissenschaften, 343, Springer, Heidelberg, 2011.


\bibitem{brezis-2019-gag}
H. Brezis, P. Mironescu, {\em Where Sobolev interacts with Gagliardo–Nirenberg}, J. Funct. Anal. 277 (2019), no. 8, 2839--2864.

\bibitem{chen-2022-3dmhd-Diophant}
W. Chen, Z. Zhang, J. Zhou, {\em Global well-posedness for the 3-D MHD equations with partial diffusion in the periodic domain}, Sci. China Math. 65 (2022), no. 2, 309--318.

	\bibitem{introduction1}
P. A. Davidson, {\em An Introduction to Magnetohydrodynamics}, Cambridge Texts in Applied Mathematics, Cambridge University Press, Cambridge, 2001.

	\bibitem{poincare1-invent}
L. Desvillettes, C. Villani, {\em On the trend to global equilibrium for spatially inhomogeneous kinetic systems: the Boltzmann equation}, Invent. Math. 159 (2005), no. 2, 245--316.

\bibitem{dong-2023-jns-nonresistive}
B. Dong, J. Wu, X. Zhai, {\em Global small solutions to a special 2$\frac{1}{2}$-D compressible viscous non-resistive MHD system},  J. Nonlinear Sci. 33 (2023), no. 1, Paper No. 21, 37 pp.


\bibitem{poincare2-feireisl}
E. Feireisl, {\em Dynamics of Viscous Compressible Fluids}, Oxford University Press, Oxford, 2004.

\bibitem{hong-2017-siam-full}
G. Hong, X. Hou, H. Peng, C. Zhu, {\em Global existence for a class of large solutions to three-dimensional compressible magnetohydrodynamic equations with vacuum}, SIAM J. Math. Anal. 49 (2017), no. 4, 2409--2441.

\bibitem{wang-arma-2010-compressible-full}
X. Hu, D. Wang, {\em Global existence and large-time behavior of solutions to the three-dimensional equations of compressible magnetohydrodynamic flows}, Arch. Ration. Mech. Anal. 197 (2010), no. 1, 203--238.

\bibitem{huang-2013-cmp-compresssible-full}
X. Huang, J. Li, {\em Serrin-type blowup criterion for viscous, compressible, and heat conducting Navier-Stokes and magnetohydrodynamic flows}, Comm. Math. Phys. 324 (2013), no. 1, 147--171.

\bibitem{jiang-2017-nonlinear-compressible-nonresistive}
S. Jiang, J. Zhang, {\em On the non-resistive limit and the magnetic boundary-layer for one-dimensional compressible magnetohydrodynamics}, Nonlinearity 30 (2017), no. 9, 3587--3612.

\bibitem{jiang-2023-arma-nonresistive}
F. Jiang, S. Jiang, {\em On magnetic inhibition theory in 3D non-resistive magnetohydrodynamic fluids: global existence of large solutions}, Arch. Ration. Mech. Anal. 247 (2023), no. 5, Paper No. 96, 35 pp.

\bibitem{kato-1988-CommutatorEstimatesEuler}
T. Kato, G. Ponce, {\em Commutator estimates and the Euler and Navier-Stokes equations}, Comm. Pure Appl. Math. 41 (1988), no. 7, 891--907.

\bibitem{local 1984}
S. Kawashima, {\em Systems of a hyperbolic-parabolic composite type, with applications to the equations of magnetohydrodynamics}, Ph.D. thesis, Kyoto University, 1984.

\bibitem{kenig}
C. Kenig, G. Ponce, L. Vega, {\em Well-posedness of the initial value problem for the Korteweg-de Vries equation},  J. Amer. Math. Soc. 4 (1991), no. 2, 323--347.

\bibitem{background-introduction2}
B. Knaepen, R. Moreau, {\em Magnetohydrodynamic turbulence at low magnetic Reynolds number}, Annu. Rev. Fluid Mech. 40
(2008), 25--45.

\bibitem{liyachun-2023-jde-full}
Y. Li, Z. Shang, {\em Global well-posedness and exponential decay rates of the strong solutions to the two-dimensional full compressible magnetohydrodynamics equations with vacuum in some class of large initial data}, J. Differential Equations 359 (2023), 211--302.

\bibitem{lizhai-2023-jga}
Y. Li, H. Xu, X. Zhai, {\em Global smooth solutions to the 3D compressible viscous non-isentropic magnetohydrodynamic flows without magnetic diffusion},  J. Geom. Anal. 33 (2023), no. 8, Paper No. 246, 32 pp.

\bibitem{background-introduction1}
D. Montgomery, L. Turner, {\em Anisotropic magnetohydrodynamic turbulence in a strong external magnetic field}, Phys. Fluids 24
(1981), 825--831.

\bibitem{panGlobalClassicalSolutions2018}
R. Pan, Y. Zhou, Y. Zhu, {\em Global classical solutions of three dimensional viscous MHD system without magnetic diffusion on periodic boxes}, Arch. Ration. Mech. Anal. 227 (2018), no. 2, 637--662.

	\bibitem{introduction2}
P. H. Roberts, {\em An introduction to magnetohydrodynamics}, Longmans, London, 1967.

\bibitem{ren2014global}
X. Ren, J. Wu, Z. Xiang, Z. Zhang, {\em Global existence and decay of
	smooth solution for the 2-D MHD equations without magnetic diffusion},
J. Funct. Anal. 267 (2014), no. 2, 503--541.

\bibitem{suen-arma-2012-full}
A. Suen and D. Hoff, {\em Global low-energy weak solutions of the equations of three-dimensional compressible magnetohydrodynamics}, Arch. Ration. Mech. Anal. 205 (2012), no. 1, 27--58.


\bibitem{tan-2018-siam-compressible-nonresistive}
Z. Tan, Y. Wang, {\em Global well-posedness of an initial-boundary value problem for viscous non-resistive MHD systems}, SIAM J. Math. Anal. 50 (2018), no. 1, 1432--1470.

\bibitem{wu-2017-adv-compressible-nonresistive}
J.\,\,Wu,\,\,Y.\,\,Wu,\,\,{\em Global\,\,small\,\,solutions\,\,to\,\,the\,\,compressible\,\,2D\,\,magnetohydrodynamic system without magnetic diffusion}, Adv. Math. 310 (2017), 759--888.

\bibitem{zhai-3m-compressible-nonresistive}
J. Wu, X. Zhai, {\em Global small solutions to the 3D compressible viscous non-resistive MHD system}, Math. Models Methods Appl. Sci. 33 (2023), no. 13, 2629--2656.

\bibitem{zhu-2022-jfa-nonresistive-nonresistive}
J. Wu, Y. Zhu, {\em Global well-posedness for 2D non-resistive compressible MHD system in periodic domain}, J. Funct. Anal. 283 (2022), no. 7, Paper No. 109602, 49 pp.

\bibitem{preprite}
Y. Xie, Q. Jiu, J. Liu, {\em Sharp decay estimates and asymptotic stability for incompressible MHD equations without viscosity or magnetic diffusion}, arXiv: 2402.08913.

\bibitem{zhai-incompressible-2023-jde}
X. Zhai, {\em Stability for the 2D incompressible MHD equations with only magnetic diffusion}, J. Differential Equations 374 (2023), 267--278.

\bibitem{zhangting}
T. Zhang, {\em Global solutions to the 2D viscous, non-resistive MHD system with large background magnetic field}, J. Differential Equations 260 (2016), no. 6, 5450--5480.





\end{thebibliography}
\end{document}